\author{Tuomas Orponen}
\title[Radial slices]{On the Hausdorff dimension of radial slices}
\address{Department of Mathematics and Statistics\\ University of Jyv\"askyl\"a,
P.O. Box 35 (MaD)\\
FI-40014 University of Jyv\"askyl\"a\\
Finland} 
\email{tuomas.t.orponen@jyu.fi}
\date{\today}
\subjclass[2010]{28A80 (primary) 28A78 (secondary)}
\keywords{Incidences, radial projections, slicing}
\thanks{T.O. is supported by the Research Council of Finland via the project \emph{Approximate incidence geometry}, grant no. 355453, and by the European Research Council (ERC) under the European Union’s Horizon Europe research and innovation programme (grant agreement No 101087499).}
\newcommand{\R}{\mathbb{R}}
\newcommand{\N}{\mathbb{N}}
\newcommand{\Z}{\mathbb{Z}}
\newcommand{\spt}{\operatorname{spt}}
\newcommand{\Hd}{\dim_{\mathrm{H}}}
\newcommand{\diam}{\operatorname{diam}}
\newcommand{\dist}{\operatorname{dist}}
\def\Barint_#1{\mathchoice
          {\mathop{\vrule width 6pt height 3 pt depth -2.5pt
                  \kern -8pt \intop}\nolimits_{#1}}%
          {\mathop{\vrule width 5pt height 3 pt depth -2.6pt
                  \kern -6pt \intop}\nolimits_{#1}}%
          {\mathop{\vrule width 5pt height 3 pt depth -2.6pt
                  \kern -6pt \intop}\nolimits_{#1}}%
          {\mathop{\vrule width 5pt height 3 pt depth -2.6pt
                  \kern -6pt \intop}\nolimits_{#1}}}
\numberwithin{equation}{section}
\theoremstyle{plain}
\newtheorem{thm}[equation]{Theorem}
\newtheorem*{"thm"}{"Theorem"}
\newtheorem{lemma}[equation]{Lemma}
\newtheorem{cor}[equation]{Corollary}
\newtheorem{proposition}[equation]{Proposition}
\theoremstyle{definition}
\newtheorem{definition}[equation]{Definition}
\theoremstyle{remark}
\newtheorem{remark}[equation]{Remark}
\newcommand{\nref}[1]{(\hyperref[#1]{#1})}
\DeclareMathSymbol{\intop}  {\mathop}{mathx}{"B3}
\begin{document}

\begin{abstract} Let $t \in (1,2)$, and let $B \subset \mathbb{R}^{2}$ be a Borel set with $\dim_{\mathrm{H}} B > t$. I show that
\begin{displaymath} \mathcal{H}^{1}(\{e \in S^{1} : \dim_{\mathrm{H}} (B \cap \ell_{x,e}) \geq t - 1\}) > 0 \end{displaymath}
for all $x \in \R^{2} \, \setminus \, E$, where $\dim_{\mathrm{H}} E \leq 2 - t$. This is the sharp bound for $\dim_{\mathrm{H}} E$.

The main technical tool is an incidence inequality of the form
\begin{displaymath} \mathcal{I}_{\delta}(\mu,\nu) \lesssim_{t} \delta \cdot \sqrt{I_{t}(\mu)I_{3 - t}(\nu)}, \qquad t \in (1,2), \end{displaymath}
where $\mu$ is a Borel measure on $\R^{2}$, and $\nu$ is a Borel measure on the set of lines in $\R^{2}$, and $\mathcal{I}_{\delta}(\mu,\nu)$ measures the $\delta$-incidences between $\mu$ and the lines parametrised by $\nu$.

This inequality can be viewed as a $\delta^{-\epsilon}$-free version of a recent incidence theorem due to Fu and Ren. The proof in this paper avoids the high-low method, and the induction-on-scales scheme responsible for the $\delta^{-\epsilon}$-factor in Fu and Ren's work. Instead, the inequality is deduced from the classical smoothing properties of the $X$-ray transform.
 \end{abstract}

\maketitle

\setcounter{tocdepth}{1}
\tableofcontents

\section{Introduction}

\subsection{Background on projections and slices} This paper studies a Marstrand-type slicing problem for radial projections. I start by recalling Marstrand's original work, and some subsequent developments. 

For $\theta \in [0,1]$, let $\pi_{\theta}(z) := z \cdot e_{\theta}$ denote the orthogonal projection of $z$ to the line spanned by $e_{\theta} := (\cos 2\pi \theta,\sin 2\pi \theta)$. Marstrand \cite{Mar} in 1954 proved the following theorem concerning the dimension of slices of fractal subsets of the plane. Let $B \subset \R^{2}$ be a Borel set with $\Hd B > t$ for some $t \in (1,2]$, or in fact already the weaker hypothesis $\mathcal{H}^{t}(B) > 0$ is sufficient. Here $\Hd$ denotes Hausdorff dimension, and $\mathcal{H}^{t}$ is $t$-dimensional Hausdorff measure. Then, for almost every $\theta \in [0,1]$,
\begin{equation}\label{form44} \mathcal{H}^{1}(\{r \in \R : \Hd (B \cap \pi_{\theta}^{-1}\{r\}) \geq t - 1\}) > 0. \end{equation}
In English, for almost every "angle" $\theta \in [0,1]$, positively many slices of $B$ by lines perpendicular to $e_{\theta}$ have Hausdorff dimension at least $t - 1$. In \cite{MR3145914}, I showed that the same conclusion actually holds for all $\theta \in [0,1] \, \setminus \, E$, where $\Hd E \leq 2 - t$. Results of this kind are known as \emph{exceptional set estimates}.

The threshold "$2 - t$" is sharp, because it is already sharp for a weaker theorem concerning projections. Namely, consider any $\theta \in [0,1]$ such that \eqref{form44} holds. Then clearly 
\begin{equation}\label{form45} \mathcal{H}^{1}(\pi_{\theta}(B)) > 0. \end{equation}
Indeed, \eqref{form45} says that positively many slices $B \cap \pi_{\theta}^{-1}\{r\}$ are non-empty, whereas \eqref{form44} adds information about their dimension. The best that one can say about the easier problem \eqref{form45} was already established by Falconer \cite{MR0673510} in 1982: provided that $\Hd B \geq t$ and $B$ is Borel, \eqref{form45} holds for all $\theta \in [0,1] \, \setminus \, E$ where $\Hd E \leq 2 - t$. This is sharp: the original construction is due to A. Peltom\"aki \cite{Peltomaki} from 1988, but for those readers whose Finnish is rusty, another source is \cite[Example 5.13]{MR3617376}.

While orthogonal projections are perhaps the most iconic projections in $\R^{2}$, another nice family consists of the \emph{radial projections} $\pi_{x} \colon \R^{2} \, \setminus \, \{x\} \to S^{1}$, defined by
\begin{displaymath} \pi_{x}(y) = \frac{x - y}{|x - y|}, \qquad y \in \R^{2} \, \setminus \, \{x\}. \end{displaymath} 
There is plenty of recent research \cite{2022arXiv220803597B,2023arXiv231105127B,MR4269398,2020arXiv200102551L,MR3503710,MR3778538,MR3892404,OSW22,2023arXiv230904097R,MR4634691} related to the following agenda: take a theorem which is known for orthogonal projections, involving Hausdorff dimension or measures, and prove its sharp counterpart for radial projections. Some of the listed papers also deal with this problem in finite fields and higher dimensional Euclidean spaces.

Projection and slicing problems for radial projections are formally harder than their counterparts for orthogonal projections: the family of orthogonal projections can be transformed, in an incidence preserving way, to the family of radial projections to any fixed line $\ell \subset \R^{2}$. For a more careful explanation, see \cite[Section 1.2]{OSW22}. One might therefore expect that if one replaces the orthogonal projections $\pi_{\theta}$ by radial projections $\pi_{x}$ in either \eqref{form44} or \eqref{form45} (or other problems of the same nature), there will be significantly more "exceptional" parameters $x \in \R^{2}$ than "exceptional" angles $\theta \in [0,1]$.

Surprisingly, the work in the papers listed above suggests the opposite: in some cases it has been shown that the sharp bound for exceptional radial projections is no larger than the sharp bound for exceptional orthogonal projections.

For example, as already mentioned, Falconer \cite{MR0673510} in 1982 showed that \eqref{form45} can fail for an at most $(2 - t)$-dimensional set of angles $\theta \in [0,1]$. In \cite{MR3892404}, I showed that the same remains true for radial projections: if $B \subset \R^{2}$ is a Borel set with $\Hd B > t \in (1,2]$, then 
\begin{equation}\label{form46} \mathcal{H}^{1}(\pi_{x}(B)) > 0 \end{equation}
for all $x \in \R^{2} \, \setminus \, E$, where $\Hd E \leq 2 - t$. The sharpness of this estimate follows from the sharpness of Falconer's estimate, and the formal connection between orthogonal and radial projections, see \cite[Section 3.2]{MR3892404} for the details.

\subsection{The radial slicing problem} Recall that \eqref{form44} is stronger than \eqref{form45}. Regardless, as shown in \cite{MR3145914}, the sharp exceptional set estimate for the problem \eqref{form44} is the same as the sharp exceptional set estimate for \eqref{form45} -- namely $\Hd E \leq 2 - t$ in both cases. 

How about the slicing version of the radial projection theorem \eqref{form46}? Given a Borel set $B \subset \R^{2}$ with $\Hd B > t \in (1,2]$, for how many parameters $x \in \R^{2}$ can the following fail:
\begin{equation}\label{form47} \mathcal{H}^{1}(\{e \in S^{1} : \Hd (B \cap \pi_{x}^{-1}\{e\}) \geq t - 1\}) > 0? \end{equation}
Note that \eqref{form46} is the radial projection counterpart of \eqref{form44}, and is stronger than \eqref{form46}. Marstrand \cite{Mar} originally proved that \eqref{form46} holds for $\mathcal{H}^{t}$ almost all $x \in B$. 

In joint work \cite{MR3503710} with Mattila, we showed that \eqref{form47} can fail for an at most $1$-dimensional set of parameters $x \in \R^{2}$. We also proposed in \cite[Question 1.4]{MR3503710} that the sharp number should be $2 - t$. The main result of the present paper verifies this hypothesis:

\begin{thm}\label{thm2} Let $t \in (1,2)$, and let $B \subset \R^{2}$ be a Borel set with $\Hd B > t$. Then \eqref{form47} holds for all $x \in \R^{2} \, \setminus \, E$, where $\Hd E \leq 2 - t$.  \end{thm}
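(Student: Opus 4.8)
\textbf{Proof strategy for Theorem \ref{thm2}.}
The plan is to reduce the dimension statement to a quantitative $\delta$-discretised incidence estimate, and then feed that estimate the information packaged in the announced inequality $\mathcal{I}_{\delta}(\mu,\nu) \lesssim_{t} \delta \sqrt{I_{t}(\mu)I_{3-t}(\nu)}$. I would first fix $s < \Hd B$ with $s > t$, and choose, via Frostman's lemma, a probability measure $\mu$ on $B$ with $I_{s}(\mu) < \infty$. The goal is to show that the exceptional set
\begin{displaymath} E = \{x \in \R^{2} : \mathcal{H}^{1}(\{e \in S^{1} : \Hd(B \cap \ell_{x,e}) \geq t-1\}) = 0\} \end{displaymath}
has $\Hd E \leq 2-t$. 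Suppose for contradiction that $\Hd E > 2-t$; then there is $u > 2-t$ and a Frostman probability measure $\sigma$ on a compact subset of $E$ with $I_{u}(\sigma) < \infty$. The strategy is to derive a contradiction by showing that $\sigma$-a.e.\ $x$ actually \emph{does} satisfy \eqref{form47}, i.e.\ that for a positive-length set of directions $e$ the slice $B \cap \ell_{x,e}$ has dimension $\geq t-1$.

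The core of the argument is a potential-theoretic / energy estimate at a fixed scale $\delta = 2^{-n}$. For $x \in \spt \sigma$, push $\mu$ forward under $\pi_{x}$ to get a measure on $S^{1}$; the slices $B \cap \ell_{x,e}$ correspond to the conditional measures of $\mu$ along radial fibres. To control their dimension I would run the standard "$t-1$ dimensional slices" machinery (as in the proof of \eqref{form44}): it suffices to show that for a suitable choice of parameters, the $\delta$-neighbourhoods of the fibres capture $\mu$-mass in a way incompatible with the slices being lower-dimensional on a full-length set of $e$. Concretely, one sets up a family of $\delta$-tubes through points $x \in \spt\sigma$ in directions $e$, and the incidence functional $\mathcal{I}_{\delta}$ counts, weighted by $\mu$ and by $\nu$ (the latter being essentially $\sigma$ pushed to the line space via the map $(x,e) \mapsto \ell_{x,e}$, integrated against arclength on $S^{1}$), how much $\mu$-mass sits in the $\delta$-tubes. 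The announced inequality, with the exponents $t$ and $3-t$, is exactly calibrated so that $I_{t}(\mu) < \infty$ (which holds since $s > t$) and $I_{3-t}(\nu) < \infty$ — the latter being where $u > 2-t$ enters, since the line measure $\nu$ built from a $(2-t)^{+}$-dimensional $\sigma$ on the point variable and the $1$-dimensional arclength on the direction variable has finite $(3-t)$-energy. Summing the resulting bound $\mathcal{I}_{\delta}(\mu,\nu) \lesssim_{t} \delta$ dyadically in $\delta$ and comparing with the lower bound one would get if too many slices were small, one arrives at the desired contradiction; the sharp numerology $2 + 1 - (3-t) = t$ matches the Frostman dimension of $B$, which is why the threshold $2-t$ comes out sharp and is not improvable.

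I would carry out the steps in this order: (1) Frostman reduction producing $\mu$ on $B$ and $\sigma$ on the putative exceptional set, and setting up the line measure $\nu$ and verifying $I_{3-t}(\nu) \lesssim I_{u}(\sigma)$; (2) recalling/stating the qualitative slicing criterion that reduces "$\Hd(B \cap \ell_{x,e}) \geq t-1$ for positively many $e$" to a quantitative statement about $\mu$-masses of $\delta$-neighbourhoods of radial fibres, uniformly (or on average) in $x$; (3) rephrasing that quantitative statement as a lower bound for $\mathcal{I}_{\delta}(\mu,\nu)$ (or a closely related bilinear quantity) under the counter-assumption; (4) invoking the incidence inequality to get the matching upper bound $\mathcal{I}_{\delta}(\mu,\nu) \lesssim_{t} \delta$; (5) summing over dyadic $\delta$ and extracting the contradiction, hence $\Hd E \leq 2-t$. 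The sharpness claim (that $2-t$ cannot be lowered) I would note follows from the already-sharp exceptional set estimate for radial projections \eqref{form46}, since \eqref{form47} implies \eqref{form46} pointwise in $x$, so any construction showing \eqref{form46} fails on a $(2-t)$-dimensional set automatically shows \eqref{form47} fails there too.

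\textbf{Main obstacle.} The delicate point is step (3): converting a qualitative, sup-over-scales statement about conditional measures ("the slices are small") into a clean, scale-by-scale \emph{lower} bound for the incidence functional that the inequality in step (4) can then contradict. In the orthogonal-projection setting this is handled by Marstrand-type fibering and a Fubini argument on a fixed foliation; for radial projections the foliations move with $x$, so one must integrate against $\sigma$ and argue that the counter-assumption forces a uniform deficiency of $\mu$-mass in the tubes on a $\sigma$-positive set of $x$. Making this uniform (rather than merely $x$-by-$x$) while keeping the energy of $\nu$ under control — i.e.\ not losing the sharp exponent $3-t$ when one couples the point variable $x \sim \sigma$ with the direction variable $e \sim \mathcal{H}^{1}|_{S^{1}}$ — is where the real work lies, and it is exactly the place where the $\delta^{-\epsilon}$-free nature of the incidence bound is essential: an $\epsilon$-loss there would degrade the dimension threshold and miss the sharp value $2-t$.
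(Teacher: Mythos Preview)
Your Frostman setup and the overall contradiction scheme match the paper's reduction. But your step (3) has a genuine gap, and it is more than ``delicate''. With $\nu$ taken to be the push-forward of $\sigma \times \mathcal{H}^{1}|_{S^{1}}$ under $(x,e) \mapsto \ell_{x,e}$, one has
\[
\mathcal{I}_{\delta}(\mu,\nu) = \iint \mu(\ell_{x,e}(\delta)) \, de \, d\sigma(x),
\]
and for every fixed $x$ the inner integral is $\sim \delta$ simply by Fubini: each $y \in \spt\mu$ lies in $\ell_{x,e}(\delta)$ for a $\sim\delta$-arc of directions $e$ (using $\dist(\spt\mu,\spt\sigma)\gtrsim 1$). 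So the trivial lower bound and the upper bound from the incidence inequality are both of exact order $\delta$, and the counter-assumption ``slices are small'' has not entered at all. A first-moment quantity in $\mu$ paired with this particular $\nu$ cannot detect slice dimension; you would need either a second-moment/energy bound on conditional measures, or a much more carefully chosen tube measure. Your ``summing dyadically in $\delta$'' produces a convergent series on both sides and no contradiction.

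The paper's route through Theorems~\ref{thm3}--\ref{thm4} is considerably more elaborate than a direct energy comparison. The counter-assumption $\mathcal{H}^{\tau-1}_{\delta,\infty}(F \cap T)\leq \epsilon$ is used to cover each $F \cap T$ by $(\delta\times\Delta_{j})$-rectangles; a pigeonhole then locates one scale $\Delta$ at which many tubes carry an ``over-crowded'' $\Delta$-segment $R$ with $\mu(R)\gtrsim \epsilon^{-1}\delta\Delta^{\tau-1}$. The line measure eventually fed to the incidence inequality is supported on \emph{these selected tubes}, localised to a single $\Delta$-square and rescaled by $\Delta^{-1}$ --- not on all lines through $\spt\sigma$. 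Verifying finite $(3-t)$-energy for that measure requires two further tools: a quantitative radial-projection estimate (Corollary~\ref{cor2}), ensuring that for typical $q\in F$ the tubes through $q$ arising from $E$ form a genuine $(\delta,\underline{s})$-set with $\underline{s}>2-t$; and a quantitative Furstenberg-type estimate (Theorem~\ref{thm3a}, itself proved via the incidence inequality), showing that the union of such families over a $t$-dimensional set of $q$'s contains a Katz--Tao $(\delta/\Delta,\sigma+1)$-set with $\sigma+1>3-t$. Your outline omits all of this machinery; in particular the incidence inequality is invoked \emph{twice} at two different scales, and the $\delta^{-\epsilon}$-freeness is needed not for a dyadic sum in $\delta$ but because the final comparison happens at the pigeonholed scale $\Delta$, where only $\log(1/\Delta)$-losses --- not $\log(1/\delta)$-losses --- are survivable.
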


\begin{remark}\label{rem2} Theorem \ref{thm2} (re-)covers the sharp exceptional set estimates for all the problems \eqref{form44}, \eqref{form45}, \eqref{form46}, and \eqref{form46} at the same time, but still remains slightly unsatisfactory. Namely, as in Marstrand's original slicing theorem \cite{Mar}, one might hope to prove Theorem \ref{thm2} under the weaker hypothesis that $\mathcal{H}^{t}(B) > 0$. This remains open. \end{remark}

\subsection{An incidence estimate}  The main new tool for proving Theorem \ref{thm2} is the following weighted incidence inequality:

\begin{thm}\label{main2} Let $\mu$ be a finite Borel measure on $B(1)$, and let $\nu$ be a finite Borel measure on $[\tfrac{1}{4},\tfrac{3}{4}] \times [-1,1]$. Let $\delta \in (0,1/50]$. Then,
\begin{equation}\label{form48} \mathcal{I}_{\delta}(\mu,\nu) \lesssim_{t} \delta \cdot \sqrt{I_{3 - t}(\mu)I_{t}(\nu)}, \qquad t \in (1,2), \end{equation}
where $I_{s}(\mu) = \iint |x - y|^{-s} \, d\mu x \, d\mu y$ is the $s$-dimensional Riesz energy for $s \in (0,2)$. \end{thm}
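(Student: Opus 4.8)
The plan is to unwind $\mathcal{I}_{\delta}$ into a bilinear pairing on the space of lines, test it against Fourier transforms, and distribute a single weight so that the half-derivative smoothing gain of the $X$-ray transform lines up with the two Riesz exponents $3-t$ and $t$. First I would parametrise a line $\ell_{\theta,b}$ by a unit normal $e_{\theta}=(\cos 2\pi\theta,\sin 2\pi\theta)$ and offset $b$, so that $\dist(z,\ell_{\theta,b})=|z\cdot e_{\theta}-b|$ and, essentially by definition, $\mathcal{I}_{\delta}(\mu,\nu)=\iint\mathbf{1}_{\{|z\cdot e_{\theta}-b|\le\delta\}}\,d\mu(z)\,d\nu(\theta,b)=\langle T_{\delta}\mu,\nu\rangle$, where $T_{\delta}\mu(\theta,b):=\big((\pi_{\theta})_{\#}\mu\ast_{b}\mathbf{1}_{[-\delta,\delta]}\big)(b)$ is the (measure-valued) Radon transform of $\mu$ smoothed at scale $\delta$ in the offset variable. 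As $\nu$ is supported in $\{\theta\in[\tfrac14,\tfrac34]\}$, I may multiply by a cutoff $\chi\in C_{c}^{\infty}(0,1)$ that is $\equiv1$ there, so $\mathcal{I}_{\delta}(\mu,\nu)=\langle\chi T_{\delta}\mu,\nu\rangle$ with both factors supported in a fixed bounded subset of the $(\theta,b)$-plane; Parseval then gives $\mathcal{I}_{\delta}(\mu,\nu)=\int_{\mathbb{R}^{2}}\widehat{\chi T_{\delta}\mu}\,\overline{\hat{\nu}}$, where $\hat{\nu}$ is the planar Fourier transform of the measure. By the Fourier slice theorem, $\widehat{\chi T_{\delta}\mu}(\tau,\sigma)=2\delta\,K_{\delta}(\sigma)\,\widehat{F_{\sigma}}(\tau)$, where $F_{\sigma}(\theta):=\chi(\theta)\hat{\mu}(\sigma e_{\theta})$ and $K_{\delta}(\sigma)=\tfrac{\sin(2\pi\delta\sigma)}{2\pi\delta\sigma}$ satisfies $\|K_{\delta}\|_{\infty}\le1$; the decisive factor $2\delta$ is simply the mass of $\mathbf{1}_{[-\delta,\delta]}$.

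Next I would apply Cauchy--Schwarz with the weight $W(\tau,\sigma)=(1+|(\tau,\sigma)|)^{2-t}$, obtaining $\mathcal{I}_{\delta}(\mu,\nu)\le\tilde{A}B$ with $\tilde{A}^{2}=\int|\widehat{\chi T_{\delta}\mu}|^{2}W$ and $B^{2}=\int|\hat{\nu}|^{2}W^{-1}$. Since $0<t<2$, the classical Fourier formula for the Riesz energy on $\mathbb{R}^{2}$ gives
\[ B^{2}=\int_{\mathbb{R}^{2}}|\hat{\nu}(\eta)|^{2}(1+|\eta|)^{t-2}\,d\eta\le\int_{\mathbb{R}^{2}}|\hat{\nu}(\eta)|^{2}|\eta|^{t-2}\,d\eta\approx_{t}I_{t}(\nu), \]
so the whole inequality reduces to $\tilde{A}^{2}\lesssim_{t}\delta^{2}I_{3-t}(\mu)$. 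Because $|K_{\delta}|\le1$, this is a weighted instance of the half-derivative smoothing of the Radon transform for the compactly supported measure $\mu$ (i.e. $\|R\mu\|_{H^{(2-t)/2}}^{2}\lesssim_{t}\|\mu\|_{H^{(1-t)/2}(\mathbb{R}^{2})}^{2}\approx_{t}I_{3-t}(\mu)$, the last comparison using that $\spt\mu$ is bounded), which I would establish directly by a Fourier--Bessel computation --- the point being that this is one-shot, with no induction on scales.

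For that, I would expand the restriction of $\hat{\mu}$ to the circle of radius $|\sigma|$ angularly, $\hat{\mu}(\sigma e_{\theta})=\sum_{n}c_{n}(\sigma)e^{-2\pi in\theta}$. Jacobi--Anger identifies $c_{n}(\sigma)$ with a $J_{n}$-average of $\mu$, so Parseval on the circle gives $\sum_{n}|c_{n}(\sigma)|^{2}=\int_{0}^{1}|\hat{\mu}(\sigma e_{\theta})|^{2}\,d\theta=:g(\sigma)$, while the elementary bound $|J_{n}(x)|\le(|x|/2)^{|n|}/|n|!$ makes $c_{n}(\sigma)$ super-exponentially small once $|n|\gtrsim|\sigma|$; in particular $\int_{|\sigma|\lesssim|n|}|c_{n}(\sigma)|^{2}\,d\sigma\lesssim\|\mu\|^{2}4^{-|n|}$. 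A routine almost-orthogonality step (convolution with $\hat{\chi}$, using that $W$ is slowly varying and that $t>1$ makes $\int|s|^{-t}\,ds$ converge at infinity) then shows that $\tilde{A}^{2}$ is at most a constant multiple of
\[ \delta^{2}\sum_{n}\int_{\mathbb{R}}|K_{\delta}(\sigma)|^{2}\,|c_{n}(\sigma)|^{2}\,(1+|(n,\sigma)|)^{2-t}\,d\sigma. \]
For $|n|\lesssim 1+|\sigma|$ the weight is $\approx(1+|\sigma|)^{2-t}$; summing in $n$, discarding $|K_{\delta}|\le1$ and passing to polar coordinates gives $\lesssim_{t}\delta^{2}\int_{\mathbb{R}^{2}}|\hat{\mu}(\xi)|^{2}(1+|\xi|)^{2-t}|\xi|^{-1}\,d\xi\lesssim_{t}\delta^{2}(\|\mu\|^{2}+I_{3-t}(\mu))$, after splitting at $|\xi|=1$ and using $(1+|\xi|)^{2-t}|\xi|^{-1}\lesssim|\xi|^{1-t}$ for $|\xi|>1$. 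For $|n|\gtrsim 1+|\sigma|$ the Bessel decay beats the increasing weight $(1+|n|)^{2-t}$ and only $\delta^{2}\|\mu\|^{2}\sum_{n}(1+|n|)^{2-t}4^{-|n|}\lesssim\delta^{2}\|\mu\|^{2}$ remains. Since $\diam\spt\mu\le2$ yields $\|\mu\|^{2}\lesssim I_{3-t}(\mu)$, both ranges contribute $\lesssim_{t}\delta^{2}I_{3-t}(\mu)$, and combining with the bound on $B$ completes the argument.

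The hard part is precisely this last step, and within it the high-angular-frequency regime $|n|\gtrsim|\sigma|$: there the Sobolev weight $(1+|n|)^{2-t}$ is \emph{increasing}, so bounding $c_{n}(\sigma)$ merely through $\sum_{n}|c_{n}(\sigma)|^{2}=g(\sigma)$ loses a power of $\delta$, and one genuinely needs the super-exponential Bessel decay, which is available exactly because $\spt\mu\subset B(1)$. This is the mechanism that stands in for the high--low decomposition and induction on scales that produce the $\delta^{-\epsilon}$ in Fu and Ren's argument. A secondary, bookkeeping issue is that $I_{t}(\nu)$ is a genuinely two-dimensional quantity while the argument runs through an angular Fourier series; this is harmless only because $\nu$ is supported in a $\theta$-interval of length $<1$, which is the reason the hypothesis restricts the angular parameter to $[\tfrac14,\tfrac34]$.
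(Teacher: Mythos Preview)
Your argument is correct, and the global architecture matches the paper's: write $\mathcal{I}_{\delta}$ as a pairing between $R\mu$ and $\nu$ on line space, peel off the factor $\delta$ from the mollification at scale $\delta$, apply Plancherel and Cauchy--Schwarz with the weight $|\cdot|^{2-t}$, and bound one factor by $I_{t}(\nu)$ and the other by $\|R\mu\|_{H^{(2-t)/2}}^{2}\lesssim_{t}I_{3-t}(\mu)$. The genuine difference lies in how you establish this last smoothing estimate. The paper does it by quoting Natterer's identity $\partial_{\theta}(Rg)=\bar{e}_{\theta}\cdot\partial_{r}R(zg)$ to prove the endpoint $H_{0}^{1/2}\to H_{0}^{1}$, combining it with the trivial endpoint $H_{0}^{-1/2}\to L^{2}$, and then interpolating via Stein--Weiss; the Fourier analysis on line space is carried out with Fourier \emph{series} in $\theta$ (so the weight lives on $\mathbb{Z}\times\mathbb{R}$), and a short argument is needed to compare this with the $\mathbb{R}^{2}$-Fourier transform of $\nu$. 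You instead cut off in $\theta$, take the continuous $\mathbb{R}^{2}$-Fourier transform, expand $\hat{\mu}(\sigma e_{\theta})$ via Jacobi--Anger, and kill the high-angular-frequency regime $|n|\gtrsim|\sigma|$ directly with the Bessel bound $|J_{n}(x)|\le(|x|/2)^{|n|}/|n|!$, which is exactly where the hypothesis $\spt\mu\subset B(1)$ enters. Your route is more elementary in that it avoids interpolation theory and the PDE identity, at the price of a slightly longer hands-on computation (the almost-orthogonality step after convolution with $\hat{\chi}$, which is standard since $\hat{\chi}$ is Schwartz and the weight $(1+|\cdot|)^{2-t}$ is slowly varying; your parenthetical remark that ``$t>1$ makes $\int|s|^{-t}\,ds$ converge'' seems superfluous there). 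A small bookkeeping point you left implicit: the Parseval identity $\langle\chi T_{\delta}\mu,\nu\rangle=\int\widehat{\chi T_{\delta}\mu}\,\overline{\hat{\nu}}$ is cleanest to justify after first reducing to smooth $\mu,\nu$ by convolution, which the paper also does.
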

The notation will be properly defined in Section \ref{s:incidences}. Here I just mention that the measure $\nu$ should be interpreted as being supported on the set of lines in $\R^{2}$, where the lines are parametrised by $[0,1] \times \R$. In fact, the parametrisation is concretely $(\theta,r) \mapsto \{z \in \R^{2} : z \cdot (\cos 2\pi \theta,\sin 2\pi \theta) = r\}$. The quantity $\mathcal{I}_{\delta}(\mu,\nu)$ is defined as
\begin{equation}\label{form51} \mathcal{I}_{\delta}(\mu,\nu) := (\mu \times \nu)(\{(p,q) : p \in T_{q}(\delta)\}), \end{equation}
and $T_{q}(\delta)$ is the $\delta$-tube around the line with parameter $q$.

Theorem \ref{main2} is closely related to the incidence estimates obtained recently by Fu and Ren \cite{fu2022incidence}. In fact, a version of Theorem \ref{main2} with $\delta^{-\epsilon}$-losses could be deduced from \cite[Theorem 1.5]{fu2022incidence}. However, the proof of Theorem \ref{main2} is completely different from the proof in \cite{fu2022incidence}. The $\delta^{-\epsilon}$-factor in \cite{fu2022incidence} arises from an induction-on-scales argument, whereas \eqref{form48} is deduced from the smoothing properties of the planar $X$-ray transform, see Section \ref{s:outline} for a quick explanation. In the converse direction, Theorem \ref{main2} does not seem to imply \cite[Theorem 1.5]{fu2022incidence} in full generality (even if $\delta^{-\epsilon}$-losses are allowed).

\begin{remark} The $\delta^{-\epsilon}$-freeness of Theorem \ref{main2} is undeniably useful for the argument presented below for Theorem \ref{thm2}. Even a factor of order $\log(1/\delta)$ would be unacceptable in the final estimates below \eqref{form39}. This was indeed the main reason to seek a version of Fu and Ren's estimates without the $\delta^{-\epsilon}$-factor. Nonetheless, I do not have a convincing philosophical reason why the $\delta^{-\epsilon}$-freeness would be strictly necessary for Theorem \ref{thm2}. In a related context, Harris \cite[Theorem 4.2]{2023arXiv230104645H} proves a Marstrand-type slicing theorem for vertical projections in the first Heisenberg group, and the proof in that paper is crucially based on the endpoint ($\delta^{-\epsilon}$-free) trilinear Kakeya inequality. 
 \end{remark}

\subsection{Proof ideas and challenges}\label{s:outline} The full details of the incidence estimate \eqref{form48} are given in Section \ref{s:XRay}. Here we ignore the technicalities and just present the simple idea. For $f \in C_{c}([0,1] \times \R)$, consider the adjoint $X$-ray transform
\begin{displaymath} R^{\ast}f(z) := \int_{0}^{1} f(\theta,\pi_{\theta}(z)) \, d\theta. \end{displaymath}
If $f$ is interpreted as a density on the set of lines, and $g \in C_{c}(\R^{2})$ is a density on the set of points, then the quantity 
\begin{equation}\label{form50} "\mathcal{I}(f,g)" := \iint g(z)R^{\ast}f(z) \, dz = \iint Rg(\theta,r)f(\theta,r) \, d\theta \, dr \end{equation}
arguably measures the incidences between the lines parametrised by $f$, and $g$. 

Moving towards \eqref{form48}, assume that $f$ is a fractal measure on the space of lines, say $I_{t}(f) \leq 1$ for $t \in (0,2)$ (here $I_{t}(f)$ is the $t$-dimensional Riesz energy). Then $f \in H^{-(2 - t)/2}$. By the duality of $H^{s}$ and $H^{-s}$, the right hand side of \eqref{form50} is bounded whenever $Rg \in H^{(2 - t)/2}$. By the well-known $\tfrac{1}{2}$-order smoothing behaviour of the operator $R$, this is true if 
\begin{displaymath} g \in H^{[(2 - t)/2] - 1/2} = H^{-[2 - (3 - t)]/2} \quad \Longleftrightarrow \quad I_{3 - t}(g) < \infty. \end{displaymath}
The hypothesis $t > 1$ was needed in the final equivalence. To summarise, the quantity "$\mathcal{I}(f,g)$" will be bounded if $I_{t}(f) < \infty$ and $I_{3 - t}(g) < \infty$. This is \eqref{form48}. The additional factor "$\delta$" appears when one honestly expresses the quantity $\mathcal{I}_{\delta}(\mu,\nu)$ from \eqref{form51} in the form \eqref{form50}, see the argument leading to \eqref{form52}.

Unfortunately, the proof of Theorem \ref{thm2} is not as clean-cut. The first step is to find a suitable $\delta$-discretised version, and reduce matters to that version. This is accomplished in Section \ref{s:discretisation}. Roughly speaking, Theorem \ref{thm3} says the following.

Assume that $E,F \subset B(1)$ are a pair of $\delta$-discretised sets with $\dist(E,F) \geq 1$, where $E$ is $(2 - t)$-dimensional and $F$ is $t$-dimensional. Assume that to each $p \in E$ there corresponds a family $\mathcal{T}_{p}$ of $\delta$-tubes covering $99\%$ of $F$. Then, for every $\tau < t$, there exists a tube $T \in \mathcal{T}_{p}$ (for some $p \in E$) such that $\mathcal{H}^{\tau - 1}_{\delta,\infty}(F \cap T) \gtrsim_{\tau} 1$. Proving this statement with $\tau = t$ would likely solve the problem mentioned in Remark \ref{rem2}.

The proof of Theorem \ref{thm3} proceeds by making a counter assumption: $\mathcal{H}^{\tau - 1}_{\delta,\infty}(F \cap T) \leq \epsilon$ for all $T \in \mathcal{T}_{p}$ and $p \in E$. This form of the counter assumption forebodes a difficulty: life would be easier if one could assume $\mathcal{H}^{\tau - 1}_{\delta,\infty}(F \cap T) \leq \delta^{\epsilon}$. I do not know how to reduce matters to this easier problem. This issue led to the introduction of Theorem \ref{main2}. With only the weaker form of the counter assumption available, the proof of Theorem \ref{thm3} may not introduce any factors which grow as functions of $\delta^{-1}$, such as $\log(1/\delta)$. Recalling that Theorem \ref{thm3} is a statement about $\delta$-tubes, this is a non-trivial problem.

With some initial pigeonholing, the counter assumption allows one to find an intermediate scale $\Delta \in (\delta,\tfrac{1}{2}]$ with $\Delta = o_{\epsilon}(1)$, where the intersections $F \cap T$ look "over-crowded" for many tubes $T \in \mathcal{T} := \bigcup_{p \in E} \mathcal{T}_{p}$. This pigeonholing only produces factors of the order $\log(1/\Delta)$, which are manageable. It may be helpful to think that $\Delta \sim \epsilon$ from now on.

Above the words "over-crowded" mean that tube-segments of the form $T \cap B(x_{0},\Delta)$ are suspiciously rich with points from $F \cap B(x_{0},\Delta)$. Here $B(x_{0},\Delta)$ is a judiciously chosen but fixed $\Delta$-disc. With such information in hand, one is tempted to apply Theorem \ref{main2} to (suitable measures associated to)
\begin{displaymath} F \cap B(x_{0},\Delta) \quad \text{and} \quad \mathcal{T} \cap F \cap B(x_{0},\Delta), \end{displaymath}
where $\mathcal{T} \cap F \cap B(x_{0},\Delta) = \{T \in \mathcal{T} : T \cap F \cap B(x_{0},\Delta) \neq \emptyset\}$. The good news is that $F \cap B(x,\Delta)$ is still a $t$-dimensional set. The bad news is that there is no \emph{a priori} information about the family $\mathcal{T} \cap F \cap B(x_{0},\Delta)$. Fortunately, the statement of Theorem \ref{main2} guides us in the right direction: we just need to show that $\mathcal{T} \cap B(x_{0},\Delta)$ is a $(3 - t)$-dimensional set, or at least contains a big $(3 - t)$-dimensional subset.

The main observation is that the tube sets $\mathcal{T}^{q} = \{T \in \mathcal{T} : q \in T\}$ are roughly $(2 - t)$-dimensional, because $\mathcal{T}^{q} \cong \pi_{q}(E)$, and $E$ was assumed to be $(2 - t)$-dimensional. Consequently the family 
\begin{displaymath} \mathcal{T}' := \bigcup_{q \in F \cap B(x_{0},\Delta)} \mathcal{T}^{q} \subset \mathcal{T} \cap F \cap B(x_{0},\Delta) \end{displaymath}
is a (dual) $(2 -t,t)$-Furstenberg set, and has dimension $\geq (3 - t)$ by \cite[Theorem 1.6]{fu2022incidence}. This is what was needed. Once it has been established that $F \cap B(x_{0},\Delta)$ is $t$-dimensional and $\mathcal{T}'$ is $(3 - t)$-dimensional, Theorem \ref{main2} produces an upper bound for incidences which contradicts the "over-crowding" phenomenon for the tube segments $T \cap B(x_{0},\Delta)$.

The informal arguments with "dimension" in the previous paragraph need to be quantified carefully. This work may have some independent interest. For example, Theorem \ref{thm3a} is a quantitative version of \cite[Theorem 1.6]{fu2022incidence}, and the proof is rather different from Fu and Ren's original argument. Concretely, in the proof Theorem \ref{thm3}, one has access to information of the form "$\mathcal{T}^{q}$ is a $(\delta,2 - t,\log(1/\Delta))$-set". Then, to proceed, one infers from Theorem \ref{thm3a} that $\mathcal{T}'$ contains a $(\delta,3 - t,\log^{10}(1/\Delta))$-set.

\subsection*{Notation} I write $A \lesssim B$ if there exists an absolute constant $C > 0$ such that $A \leq CB$. If $C$ is allowed to depend on a parameter "$p$", this is signified by writing $A \lesssim_{p} B$. Later, it will be agreed that the notation "$\lesssim_{p}$" is abbreviated to "$\lesssim$" for certain key parameters "$p$".

\subsection*{Acknowledgements} I am grateful to Pertti Mattila for introducing me to the problem studied in Theorem \ref{thm2}, and for numerous fruitful discussions on the topic.

\section{Incidence estimates via the $X$-ray transform}\label{s:XRay}

In this section Theorem \ref{main2} is proved. The argument is mainly based on $\tfrac{1}{2}$-order Sobolev smoothing property of the $X$-ray transform in the plane, which is classical and very well documented, see for example \cite[Theorem 5.3]{MR0856916}. I decided to present the necessary theory without any prerequisites assumed on the $X$-ray transform, but experts on this topic are advised to skip ahead to Section \ref{s:incidences}. 

\subsection{The $X$-ray transform} We start with the definitions.

\begin{definition}[$X$-ray transform] For $\theta \in [0,1]$, let $e_{\theta} := (\cos 2\pi \theta,\sin 2\pi \theta)$, and let $\pi_{\theta}(z) := z \cdot e_{\theta}$ be the orthogonal projection to the line spanned by $\theta$. For $g \in C_{c}(\R^{2})$ or $g \in \mathcal{S}(\R^{2})$, the \emph{$X$-ray transform} of $g$ is defined by
\begin{displaymath} Rg(\theta,r) := \int_{\pi_{\theta}^{-1}\{r\}} g(z) \, d\mathcal{H}^{1}(z), \qquad (\theta,r) \in [0,1] \times \R. \end{displaymath}
Thus, $R$ maps $g$ to a continuous function defined on the "space of lines" parametrised by the pair $(\theta,r)$. For $f \in C_{c}([0,1] \times \R)$, consider also the \emph{adjoint $X$-ray transform}
\begin{displaymath} R^{\ast}f(z) := \int_{0}^{1} f(\theta,\pi_{\theta}(z)) \, d\theta, \qquad z \in \R^{2}. \end{displaymath} \end{definition}

\begin{remark} As the naming suggests, the operators $R,R^{\ast}$ indeed satisfy the following formal adjointness property: given $f \in C_{c}([0,1] \times \R)$ and $g \in C_{c}(\R^{2})$,
\begin{align*} \int_{\R^{2}} R^{\ast}f(z)g(z) \, dz & = \int_{\R^{2}} \left(\int_{0}^{1} f(\theta,\pi_{\theta}(z)) \, d\theta \right) g(z) \, dz \\
& = \int_{0}^{1} \int_{\R} f(\theta,r) \int_{\pi_{\theta}^{-1}\{r\}} g(z) \, d\mathcal{H}^{1}(z) \, dr \, d\theta \\
& = \int_{0}^{1} \int_{\R} f(\theta,r)(Rg)(\theta,r) \, dr \, d\theta. \end{align*} \end{remark}

\begin{remark} Note that $Rg(0,\cdot) = Rg(1,\cdot)$, so $Rg$ can also be viewed as a $1$-periodic function in the $1^{st}$ variable. This will be useful to keep in mind when we are later computing the Fourier coefficients of $Rg$ in the $1^{st}$ variable.

\end{remark}

The famous "projection slice-theorem" says that the $X$-ray transform has a neat relation to the Fourier transform. We record this below. For $h \in C_{c}([0,1] \times \R)$, let
\begin{equation}\label{form53} \tilde{h}(\theta,\rho) := \int_{\R} e^{-2\pi i \rho r} h(\theta,r) \, dr, \qquad (\theta,\rho) \in [0,1] \times \R, \end{equation} 
be the Fourier transform of $h$ in (only) the $2^{nd}$ variable. Now, if $g \in C_{c}(\R^{2})$, we have $Rg \in C_{c}([0,1] \times \R)$, and
\begin{align} \widetilde{(Rg)}(\theta,\rho) & = \int_{\R} e^{-2\pi i \rho r} \int_{\pi_{\theta}^{-1}\{r\}} g(z) \, d\mathcal{H}^{1}(z) \, dr \notag\\
&\label{form3} = \int_{\R^{2}} e^{-2\pi i z \cdot \rho e_{\theta}} g(z) \, dz = \hat{g}(\rho e_{\theta}), \qquad (\theta,\rho) \in [0,1] \times \R, \end{align} 
where $\hat{g}$ is the full Fourier transform of $g$ in $\R^{2}$.

\subsection{Angular derivative of the $X$-ray transform} The "angular" $\partial_{\theta}$-derivative of $Rg$ coincides with the "radial" $\partial_{r}$-derivative of $R(Pg)$, where $P$ is a first degree polynomial, \cite[p. 12]{MR0856916}. I repeat the calculation from the reference, using the coordinates above for the $X$-ray transform. Let $g \in \mathcal{S}(\R^{2})$, and following \cite[p. 12]{MR0856916}, write $Rg$ in the following slightly imprecise way:
\begin{displaymath} Rg(\theta,r) = \int g(z)\delta(r - z \cdot e_{\theta}) \, dz, \end{displaymath}
where $\delta$ stands for the Dirac $\delta$ on $\R$. This could be made rigorous by replacing $\delta$ by an approximate identity, see \cite[p. 12]{MR0856916}. Pretending that one may compute the $\partial_{\theta},\partial_{r}$ derivatives of $(\theta,r) \mapsto \delta(r - z \cdot e_{\theta})$, one arrives a little formally at the following useful relation, where $\bar{e}_{\theta} := (\sin 2\pi \theta,-\cos 2\pi \theta)$:
\begin{align} \partial_{\theta}(Rg)(\theta,r) & = \int g(z)[\partial_{\theta}\delta(r - z \cdot e_{\theta})] \, dz \notag\\
& = \int g(z)(z \cdot \bar{e}_{\theta}) \delta'(r - z \cdot e_{\theta}) \, dz \notag\\
& = \bar{e}_{\theta} \cdot \partial_{r}\int g(z)z\delta(r - z \cdot e_{\theta}) \, dz \notag\\
&\label{form2a} = \bar{e}_{\theta} \cdot (\partial_{r}R(zg))(\theta,r), \qquad (\theta,r) \in (0,1) \times \R. \end{align}
Note that $zg \in \mathcal{S}(\R^{2},\R^{2})$ is a vector-valued Schwartz function with components $z_{1}g$ and $z_{2}g$. The notation $R(zg)$ refers to the $\R^{2}$-valued function with components $R(z_{1}g)$ and $R(z_{2}g)$, and the $\partial_{r}$-derivative is calculated component-wise. We leave it to the reader (of \cite[p. 12]{MR0856916}) to justify \eqref{form2a} rigorously for $g \in \mathcal{S}(\R^{2})$.

\subsection{Homogeneous Sobolev norms} I next survey the $\tfrac{1}{2}$-order smoothing behaviour of the $X$-ray transform. This section can be viewed as an exposition of \cite[Theorem 5.3]{MR0856916} albeit with slightly different notational conventions. 

\begin{definition}[Fourier transforms and Sobolev norms] For $f \in L^{1}([0,1] \times \R)$, define the (full) Fourier transform
\begin{displaymath} (\mathcal{F}f)(n,\rho) := \int_{0}^{1}\int_{\R} e^{-2\pi i(n \theta + \rho r)} f(\theta,r) \, d\rho \, d\theta, \qquad (n,\theta) \in \Z \times \R. \end{displaymath} 
For $s \geq 0$, define the (homogeneous) Sobolev norm
\begin{displaymath} \|f\|^{2}_{H_{0}^{s}} := \sum_{n \in \Z} \int_{\R} |(\mathcal{F}f)(n,\rho)|^{2}|(n,\rho)|^{2s} \, d\rho, \qquad f \in L^{1}([0,1] \times \R). \end{displaymath}
(The possibility $\|f\|_{H_{0}^{s}}^{2} = \infty$ is allowed.) For $g \in L^{1}(\R^{2})$ and $s \in \R$, we also denote
\begin{displaymath} \|g\|_{H^{s}_{0}}^{2} := \int_{\R^{2}} |\hat{g}(\xi)|^{2}|\xi|^{2s} \, d\xi. \end{displaymath}
The correct interpretation of $\|h\|_{H^{s}_{0}}$ is always clear from the domain of $h$.  \end{definition}

\begin{remark} A common trick below will be to use Plancherel in (only) the $1^{st}$ variable. If $g \in C_{c}([0,1] \times \R)$ is a function with $g(0,\cdot) = g(1,\cdot)$, then
\begin{equation}\label{form54} \sum_{n \in \Z} \int_{\R} |(\mathcal{F}g)(n,\rho)|^{2} \, d\rho = \int_{0}^{1} \int_{\R} |\tilde{g}(\theta,\rho)|^{2} \, d\rho \, d\theta, \end{equation}
where $\tilde{g}(\theta,\rho)$ is the Fourier transform in the $2^{nd}$ variable defined in \eqref{form53}. The formula \eqref{form54} follows by noting that $(\mathcal{F}g)(n,\rho)$ is the $n^{th}$ Fourier coefficient of the function $\theta \mapsto \tilde{g}(\theta,\rho)$, which is $1$-periodic by the hypothesis $g(0,\cdot) = g(1,\cdot)$. \end{remark}

\begin{remark} The Fourier transform $\mathcal{F}f$ is defined with respect to both the "angular" variable $\theta$ and the "radial" variable $r$. Therefore also the Sobolev regularity of $f$ encodes smoothness in both variables. However, if $f = Rg$ for some $g \in \mathcal{S}(\R^{2})$, formula \eqref{form2} shows that the angular and radial smoothness of $f$ are closely related. This observation is the key to the proof of the next lemma. The argument is virtually copied (up to notational conventions) from the proof of \cite[Theorem 5.2]{MR0856916}. \end{remark}

\begin{lemma}\label{lemma2} For every $\chi \in \mathcal{D}(\R^{2})$ there exists a constant $C_{\chi} > 0$ such that
\begin{displaymath} \|R(g\chi)\|_{H^{1}_{0}} \leq C_{\chi}\|g\|_{H^{1/2}_{0}}, \qquad g \in \mathcal{S}(\R^{2}). \end{displaymath} \end{lemma}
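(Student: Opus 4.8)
The plan is to use the projection--slice identity \eqref{form3} to express the $X$-ray transform on the Fourier side, and then to exploit the angular-derivative relation \eqref{form2a} to trade angular frequency (the variable $n$) against radial frequency (the variable $\rho$). Write $G := g\chi \in \mathcal{S}(\R^{2})$. By definition of $\|\cdot\|_{H^1_0}$ on the line-space,
\[ \|RG\|_{H^1_0}^2 = \sum_{n \in \Z} \int_{\R} |(\mathcal{F}(RG))(n,\rho)|^2 |(n,\rho)|^{2} \, d\rho, \]
and since $|(n,\rho)|^2 \sim n^2 + \rho^2$, it suffices to bound separately the "radial part'' $\sum_n \int |(\mathcal{F}(RG))(n,\rho)|^2 \rho^2 \, d\rho$ and the "angular part'' $\sum_n \int |(\mathcal{F}(RG))(n,\rho)|^2 n^2 \, d\rho$.

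For the radial part, note that $(\mathcal{F}(RG))(n,\rho)$ is the $n^{\text{th}}$ Fourier coefficient of the $1$-periodic function $\theta \mapsto \widetilde{(RG)}(\theta,\rho) = \hat{G}(\rho e_\theta)$, by \eqref{form3}. Hence by Plancherel in $\theta$ (as in \eqref{form54}),
\[ \sum_{n} \int_{\R} |(\mathcal{F}(RG))(n,\rho)|^2 \rho^2 \, d\rho = \int_0^1 \int_{\R} |\hat{G}(\rho e_\theta)|^2 \rho^2 \, d\rho \, d\theta. \]
Passing to polar coordinates $\xi = \rho e_\theta$ (with $\rho$ ranging over $\R$, so each line through the origin is covered twice), the right side is comparable to $\int_{\R^2} |\hat{G}(\xi)|^2 |\xi| \, d\xi = \|G\|_{H^{1/2}_0}^2$. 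So the radial part is exactly the desired bound, with no need for the cutoff $\chi$ here — that matches the fact that the $\tfrac12$-smoothing is sharp in the radial direction.

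For the angular part, I would use \eqref{form2a}: $\partial_\theta(RG)(\theta,r) = \bar{e}_\theta \cdot (\partial_r R(zG))(\theta,r)$. Taking the Fourier transform in both variables, $n \cdot (\mathcal{F}(RG))(n,\rho)$ is a linear combination (with coefficients coming from the Fourier expansion of $\theta \mapsto \bar e_\theta$, which has only frequencies $\pm 1$) of $\rho \cdot (\mathcal{F}(R(z_j G)))(n\pm 1,\rho)$ for $j=1,2$. Therefore, up to the harmless frequency shifts $n \mapsto n\pm1$ and a constant factor,
\[ \sum_n \int_{\R} |(\mathcal{F}(RG))(n,\rho)|^2 n^2 \, d\rho \lesssim \sum_{j=1,2} \sum_n \int_{\R} |(\mathcal{F}(R(z_j G)))(n,\rho)|^2 \rho^2 \, d\rho, \]
and each summand on the right is the radial part applied to $z_j G = z_j g \chi$ in place of $G$. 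Applying the polar-coordinates computation above to $z_j G$ gives $\|z_j G\|_{H^{1/2}_0}^2 = \int_{\R^2}|\widehat{z_j g\chi}(\xi)|^2|\xi|\,d\xi$. Now $\widehat{z_j g\chi} = \tfrac{i}{2\pi}\partial_{\xi_j}\widehat{g\chi}$, and here is where the compactly supported cutoff $\chi$ earns its keep: $z_j\chi \in \mathcal D(\R^2)$, so $z_j g\chi = g\cdot(z_j\chi)$ and one can run the same scheme with cutoff $z_j\chi$, or — more directly — bound $\|g\cdot(z_j\chi)\|_{H^{1/2}_0} \lesssim_{z_j\chi} \|g\|_{H^{1/2}_0}$ using that multiplication by a fixed Schwartz (indeed $\mathcal D$) function is bounded on $H^{1/2}$ (this is the standard fact that $H^{1/2}$ is a module over $\mathcal D$, proved via the Fourier-side convolution estimate $\int (1+|\xi|)\,|\widehat{g}\ast\widehat{\psi}(\xi)|^2\,d\xi \lesssim_\psi \int(1+|\xi|)|\widehat g(\xi)|^2\,d\xi$ using rapid decay of $\widehat\psi$). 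Collecting the two parts yields $\|R(g\chi)\|_{H^1_0} \leq C_\chi \|g\|_{H^{1/2}_0}$.

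The main obstacle is the angular part: one must legitimately differentiate under the integral/distributional sign to justify \eqref{form2a} at the level of Fourier transforms, track the frequency shift $n \mapsto n \pm 1$ coming from $\bar e_\theta$ without it interfering with the $|(n,\rho)|^{2}$ weight (this is fine since $(n\pm1)^2 \lesssim n^2 + 1$ and the $n=0,\pm1$ terms are controlled by the $L^2$-type bound on $RG$), and above all control the multiplier $g \mapsto g \cdot (z_j\chi)$ on the \emph{homogeneous} space $H^{1/2}_0$ — near $\xi = 0$ the homogeneous weight $|\xi|$ is integrable so the low frequencies cause no trouble, but one should be slightly careful stating the multiplication estimate so that it genuinely closes with a constant $C_\chi$ depending only on $\chi$ (and finitely many of its derivatives). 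None of these steps is deep; the bookkeeping with the periodic variable and the homogeneous norm is the only place requiring care.
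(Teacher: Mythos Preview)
Your proposal is correct and follows essentially the same route as the paper: split $\|R(g\chi)\|_{H^1_0}^2$ into a radial piece (weight $\rho^2$) and an angular piece (weight $n^2$), handle the radial piece by Plancherel in $\theta$ plus the projection--slice identity \eqref{form3} and polar coordinates, and handle the angular piece by the relation \eqref{form2a} to reduce to the radial computation applied to $z_jg\chi$, finishing with the boundedness of multiplication by $z_j\chi$ on $H^{1/2}_0$ (the paper cites \cite[Chapter VII, Lemma 4.5]{MR0856916} for this last point). Your bookkeeping of the $\bar e_\theta$ factor as a frequency shift $n\mapsto n\pm 1$ is in fact slightly more explicit than the paper's presentation, which writes an informal equality with $\bar e_\theta$ still appearing after the Fourier transform in $\theta$ and then immediately passes back to the $\theta$-side via Plancherel; both amount to the same thing.
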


\begin{proof} Since $|(n,\rho)|^{2} \lesssim |n|^{2} + |\rho|^{2}$ for $(n,\rho) \in \Z \times \R$, we have
\begin{displaymath} \|R(g\chi)\|_{H^{1}_{0}}^{2} \lesssim \sum_{n \in \Z} \int_{\R} |\mathcal{F}[R(g\chi)](n,\rho) n|^{2} \, d\rho + \sum_{n \in \Z} \int_{\R} |\mathcal{F}[R(g\chi)](n,\rho)|^{2}|\rho|^{2} \, d\rho =: \Sigma_{1} + \Sigma_{2}. \end{displaymath}
To treat the term $I_{1}$, recall from basic Fourier series (or check by integration by parts) that if $h \in C^{1}([0,1])$ and $h(0) = h(1)$, then
\begin{displaymath} \widehat{\partial_{\theta}h}(n) = (2\pi i n) \hat{h}(n), \qquad n \in \Z. \end{displaymath}
Similarly for the Fourier transform: if $h \in C_{c}^{1}(\R)$, then $(2\pi i \rho)\hat{h}(\rho) = \widehat{\partial_{r}h}(\rho)$. For fixed $r \in \R$, the function $\theta \mapsto R(g\chi)(\theta,r)$ is $1$-periodic, so
\begin{align*} \mathcal{F}[R(g\chi)](n,\rho) n & = \tfrac{1}{2\pi i} \mathcal{F}(\partial_{\theta}[R(g\chi)])(n,\rho)\\
& \stackrel{\eqref{form2a}}{=} \tfrac{1}{2\pi i} \bar{e}_{\theta} \cdot \mathcal{F}(\partial_{r}[R(zg\chi)])(n,\rho) = \bar{e}_{\theta} \cdot \mathcal{F}[R(zg\chi)](n,\rho)\rho. \end{align*}
Abbreviate $\bar{g} := zg\chi \in \mathcal{S}(\R^{2},\R^{2})$.  Plugging the formula above into the definition of $\Sigma_{1}$, applying Plancherel in the first variable, and eventually integrating in polar coordinates, one finds
\begin{align}\label{form42} I_{1} \leq \int_{0}^{1}\int_{\R} |\widetilde{R\bar{g}}(\theta,\rho)|^{2}|\rho|^{2} \, d\rho \, d\theta & \stackrel{\eqref{form3}}{=} \int_{0}^{1} \int_{\R} |\hat{\bar{g}}(\rho e_{\theta})||\rho|^{2} \, d\rho \, d\theta\\
& \lesssim \int_{\R^{2}} |\hat{\bar{g}}(\xi)|^{2}|\xi| \, d\xi. \notag\end{align}
Since $g \mapsto z_{1}g\chi$ and $g \mapsto z_{2}g\chi$ are bounded operators on $H_{0}^{1/2}$, with norm depending on $\chi$ (see \cite[Chapter VII, Lemma 4.5]{MR0856916}), one finally deduces that 
\begin{equation}\label{form5a} \Sigma_{1} \lesssim_{\chi} \|g\|_{H^{1/2}_{0}}^{2}. \end{equation}
Treating the term $\Sigma_{2}$ is simpler, because the power $|\rho|^{2}$ already appears. In fact, we simply apply Plancherel in the first variable to deduce that
\begin{displaymath} \Sigma_{2} = \int_{0}^{1} \int_{\R} |\widetilde{R(g\chi)}(\theta,\rho)|^{2}|\rho|^{2} \, d\rho \, d\theta. \end{displaymath}
The continuation of the estimate is the same as on line \eqref{form42}, and thus $\Sigma_{2} \lesssim_{\chi} \|g\|_{H_{0}^{1/2}}^{2}$. \end{proof}

Lemma \ref{lemma2} provides one endpoint for an upcoming interpolation argument. The following lemma provides the second endpoint:

\begin{lemma}\label{lemma3} For every $g \in \mathcal{S}(\R^{2})$,
\begin{displaymath} \|Rg\|_{L^{2}([0,1] \times \R)} \lesssim \|g\|_{H^{-1/2}_{0}}. \end{displaymath} \end{lemma}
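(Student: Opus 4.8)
The plan is to use the projection–slice theorem \eqref{form3} together with Plancherel in the radial variable, and then read off the $L^2$ bound by integrating in polar coordinates. Concretely, first I would apply Plancherel in the $2^{nd}$ (radial) variable to the function $Rg$, which gives
\begin{displaymath} \|Rg\|_{L^{2}([0,1] \times \R)}^{2} = \int_{0}^{1} \int_{\R} |\widetilde{Rg}(\theta,\rho)|^{2} \, d\rho \, d\theta. \end{displaymath}
Then I would substitute the slice formula \eqref{form3}, namely $\widetilde{Rg}(\theta,\rho) = \hat{g}(\rho e_{\theta})$, to obtain
\begin{displaymath} \|Rg\|_{L^{2}([0,1] \times \R)}^{2} = \int_{0}^{1} \int_{\R} |\hat{g}(\rho e_{\theta})|^{2} \, d\rho \, d\theta. \end{displaymath}

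The next step is to recognise the right-hand side as (a constant times) a weighted $L^2$-norm of $\hat g$ over $\R^2$ via the polar-coordinates change of variables $\xi = \rho e_{\theta}$ with $\rho \in \R$, $\theta \in [0,1]$. Since $e_{\theta}$ traces out the full circle as $\theta$ runs over $[0,1]$ and $\rho$ ranges over all of $\R$, each $\xi \in \R^2 \setminus \{0\}$ is covered (with multiplicity $2$, from $(\rho,\theta)$ and $(-\rho,\theta+\tfrac12)$), and the area element is $d\xi = |\rho| \, d\rho \, d\theta$. Hence $d\rho\, d\theta = |\xi|^{-1} \, d\xi$ up to the harmless factor $2$, and one gets
\begin{displaymath} \|Rg\|_{L^{2}([0,1] \times \R)}^{2} \lesssim \int_{\R^{2}} |\hat{g}(\xi)|^{2} |\xi|^{-1} \, d\xi = \|g\|_{H^{-1/2}_{0}}^{2}, \end{displaymath}
which is exactly the claimed inequality after taking square roots.

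As for the main obstacle: there is no deep difficulty here, but the one point requiring a little care is the justification of Plancherel in the radial variable and the polar-coordinates manipulation for a general $g \in \mathcal{S}(\R^{2})$ — in particular, that $Rg(\theta,\cdot) \in L^2(\R)$ for a.e.\ (indeed every) $\theta$, that the integrand is measurable in $(\theta,\rho)$ so Fubini applies, and that the singular weight $|\xi|^{-1}$ near the origin causes no integrability problem. The last point is automatic for $g \in \mathcal{S}(\R^2)$ since $\hat g$ is bounded and $|\xi|^{-1}$ is locally integrable in $\R^2$; and $Rg \in C_c([0,1]\times\R)$, so all the Fourier-analytic steps are legitimate. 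One should also note that the final inequality is stated with $\lesssim$ rather than equality precisely to absorb the multiplicity-$2$ factor from the polar parametrisation, so no sharp constant needs to be tracked.
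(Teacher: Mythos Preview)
Your proof is correct and follows essentially the same approach as the paper: apply Plancherel in the radial variable, invoke the projection--slice identity \eqref{form3}, and then convert the $(\theta,\rho)$-integral into a $|\xi|^{-1}$-weighted integral over $\R^{2}$ via polar coordinates. The paper's proof is a one-line version of exactly this computation.
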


\begin{proof} Fix $g \in \mathcal{S}(\R^{2})$. In the following we first apply Plancherel in the $2^{nd}$ coordinate and then integrate in polar coordinates:
\begin{displaymath} \|Rg\|_{L^{2}}^{2} = \int_{0}^{1} \int_{\R} |\widetilde{Rg}(\theta,\rho)|^{2} \, d\rho \, d\theta \stackrel{\eqref{form3}}{=} \int_{0}^{1} \int_{\R} |\hat{g}(\rho e_{\theta})|^{2} \, d\rho \, d\theta \sim \int_{\R^{2}} |\hat{g}(\xi)|^{2}|\xi|^{-1} \, d\xi = \|g\|_{H^{-1/2}}^{2}. \end{displaymath}
This is what was claimed. \end{proof}

Lemmas \ref{lemma2}-\ref{lemma3} show (up to the $\chi$-multiplication) that $R$ maps 
\begin{displaymath} H_{0}^{1/2} \to H_{0}^{1} \quad \text{and} \quad H_{0}^{-1/2} \to L^{2} = H_{0}^{0}. \end{displaymath}
Now, for $s_{1} \neq s_{2}$ and $0 < \theta < 1$, one has
\begin{displaymath} H^{(1 - \theta) s_{1} + \theta s_{2}} = (H^{s_{1}},H^{s_{2}})_{[\theta]}. \end{displaymath}
That is, $H^{(1 - \theta) s_{1} + \theta s_{2}}$ is the complex interpolation space between $H_{0}^{s_{1}}$ and $H_{0}^{s_{2}}$ with parameter $\theta$. Indeed, the version of this for non-homogeneous Sobolev spaces is stated in \cite[Theorem 6.4.5(7)]{MR0482275}, and on \cite[p. 150]{MR0482275} the authors remark that the results in \cite[Chapter 6]{MR0482275} extend to homogeneous Sobolev spaces (see also \cite[Remark 6.9.3]{MR0482275}). Using this fact formally leads to the conclusion that $R$ maps 
\begin{displaymath} H^{s}_{0} \to H_{0}^{s + 1/2} \end{displaymath}
for $s \in [-1/2,1/2]$ (taking the multiplication by $\chi$ into account appropriately).

The minor technical issue with this reasoning is that our Sobolev space consists of functions defined on $[0,1] \times \R$, and not on $\R^{n}$ as in the reference \cite{MR0482275}, so the results of \cite{MR0482275} are not formally applicable. Instead of writing that "everything works the same way" I decided to provide an argument which avoids direct reference to interpolation spaces.

I will instead apply the following $L^{p}$-interpolation theorem of Stein and Weiss (see \cite[p. 17, Exercise 12]{MR0482275} or \cite[Theorem 5.4.1]{MR0482275} or \cite{MR0092943} for the original reference):
\begin{thm}[Stein-Weiss]\label{swInterpolation} Let $0 < p \leq \infty$ and $0 < \theta < 1$. Let $\mu,\nu$ be positive measures on spaces $X,Y$. Let $w_{0},w_{1}$ be non-negative $\mu$-measurable weights, and let $v_{0},v_{1}$ be non-negative $\nu$ measurable weights. Write $w_{\theta} := w_{0}^{1 - \theta}w_{1}^{\theta}$ and $v_{\theta} := v_{0}^{1 - \theta}v_{1}^{\theta}$.

Assume that $T$ is a linear map defined on $L^{p}(X,w_{0}d\mu) \cup L^{p}(X,w_{1}d\mu)$ which maps
\begin{displaymath} T \colon L^{p}(X,w_{0} d\mu) \to L^{p}(Y,v_{0}d\nu) \quad \text{and} \quad T \colon L^{p}(X,w_{1}d\mu) \to L^{p}(Y,v_{1}d\nu) \end{displaymath}
with operator norms $M_{0} \geq 0$ and $M_{1} \geq 0$ respectively. Then $T$ extends to an operator 
\begin{displaymath} T \colon L^{p}(X,w_{\theta}d\mu) \to L^{p}(Y,v_{\theta}d\nu) \end{displaymath}
with norm $\leq M_{0}^{1 - \theta}M_{1}^{\theta}$.
\end{thm}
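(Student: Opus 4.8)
The plan is to run the classical Stein complex-interpolation argument: I will build an analytic family of test functions interpolating between the two endpoint estimates, apply the Hadamard three-lines theorem, and extract the conclusion by $L^{p}$-duality. First I will reduce to proving the bound $\|Tf\|_{L^{p}(v_{\theta}d\nu)} \le M_{0}^{1-\theta}M_{1}^{\theta}\|f\|_{L^{p}(w_{\theta}d\mu)}$ for $f$ ranging over the simple functions supported on sets of finite $\mu$-measure on which both $w_{0}$ and $w_{1}$ are positive and finite; such $f$ belong to $L^{p}(w_{0}d\mu)\cap L^{p}(w_{1}d\mu)$ (where $T$ is defined), this subspace is dense in $L^{p}(w_{\theta}d\mu)$, and the estimate then extends $T$ by continuity with the asserted norm. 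The case $p=\infty$ I will dispatch separately and trivially, since $L^{\infty}(w_{j}d\mu)$ coincides with $L^{\infty}(d\mu)$ on $\{0<w_{j}<\infty\}$; thus the real work is for $1\le p<\infty$, and the range $0<p<1$ I will address at the end.

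For the core argument, write $p'=p/(p-1)$, let $u_{j}:=v_{j}^{1-p'}$ be the dual weights, so that $u_{\theta}=u_{0}^{1-\theta}u_{1}^{\theta}=v_{\theta}^{1-p'}$, and recall that $L^{p'}(u_{\theta}d\nu)$ is the dual of $L^{p}(v_{\theta}d\nu)$ under the unweighted pairing $(f,g)\mapsto\int f\bar g\,d\nu$. Fixing $f$ as above and a simple $g$ on $Y$ with $\|g\|_{L^{p'}(u_{\theta}d\nu)}\le 1$ (supported where $v_{0},v_{1}$ are positive and finite), I will introduce, for $z$ in the closed strip $S=\{0\le\Rea z\le 1\}$, the families
\begin{displaymath}
f_{z} := f\,w_{0}^{-(1-z)/p}w_{1}^{-z/p}w_{\theta}^{1/p}, \qquad g_{z} := g\,u_{0}^{-(1-z)/p'}u_{1}^{-z/p'}u_{\theta}^{1/p'},
\end{displaymath}
extended by $0$ off the supports, and consider $F(z):=\int_{Y}(Tf_{z})\overline{g_{z}}\,d\nu$. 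The steps are then: (i) check that $z\mapsto f_{z}$ is analytic on $S$ into $L^{p}(w_{0}d\mu)$ and into $L^{p}(w_{1}d\mu)$, and likewise $g_{z}$ into $L^{p'}(u_{j}d\nu)$, so that $F$ is bounded, continuous on $S$ and analytic in its interior, with $F(\theta)=\int(Tf)\bar g\,d\nu$ since $f_{\theta}=f$, $g_{\theta}=g$; (ii) on the line $\Rea z=j$, use $|w^{iy}|=1$ to get $|f_{j+iy}|^{p}=|f|^{p}w_{j}^{-1}w_{\theta}$ and $|g_{j+iy}|^{p'}=|g|^{p'}u_{j}^{-1}u_{\theta}$, hence $\|f_{j+iy}\|_{L^{p}(w_{j}d\mu)}=\|f\|_{L^{p}(w_{\theta}d\mu)}$ and $\|g_{j+iy}\|_{L^{p'}(u_{j}d\nu)}\le 1$; (iii) combine the endpoint hypothesis $\|Tf_{j+iy}\|_{L^{p}(v_{j}d\nu)}\le M_{j}\|f_{j+iy}\|_{L^{p}(w_{j}d\mu)}$ with Hölder's inequality (splitting $v_{j}^{1/p}v_{j}^{-1/p}$ and using $v_{j}^{-p'/p}=u_{j}$) to obtain $|F(j+iy)|\le M_{j}\|f\|_{L^{p}(w_{\theta}d\mu)}$; (iv) apply the three-lines theorem to conclude $|F(\theta)|\le M_{0}^{1-\theta}M_{1}^{\theta}\|f\|_{L^{p}(w_{\theta}d\mu)}$; (v) take the supremum over admissible $g$ to turn this into $\|Tf\|_{L^{p}(v_{\theta}d\nu)}\le M_{0}^{1-\theta}M_{1}^{\theta}\|f\|_{L^{p}(w_{\theta}d\mu)}$.

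I expect the main obstacle to be step (i) together with the density reduction — that is, the measure-theoretic bookkeeping, not the complex analysis. One must verify carefully that the truncated simple functions are dense in $L^{p}(w_{\theta}d\mu)$, that the weight-powers $w_{j}^{z}$ define an honestly analytic $L^{p}$-valued function of $z$ on $S$, and that $F$ is bounded on $S$; all of this should go through by restricting to the set where $w_{0},w_{1},v_{0},v_{1}$ are simultaneously positive and finite (so the logarithms $\log w_{j},\log v_{j}$ are bounded there) and applying dominated convergence, but it is the step requiring the most care. Finally, when $0<p<1$ the space $L^{p}$ carries no useful duality; I will treat this range by re-running the same three-lines scheme with Hölder's inequality and the duality description of $\|\cdot\|_{L^{p}}$ replaced throughout by their reverse ($0<p<1$) counterparts, or, failing that, simply cite the references given in the statement. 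Since the present paper only uses $p=2$, the $1\le p<\infty$ argument above already suffices for our purposes.
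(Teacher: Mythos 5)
The paper does not prove Theorem~\ref{swInterpolation}: it is cited as known, from \cite[p.~17, Exercise~12]{MR0482275}, \cite[Theorem~5.4.1]{MR0482275}, and the original source \cite{MR0092943}. Your sketch reproduces the standard Stein complex-interpolation proof (analytic family of functions, three-lines theorem, $L^{p}$--$L^{p'}$ duality, density), and the core computation is correct: the dual weights $u_{j}=v_{j}^{1-p'}$, the families $f_{z}$ and $g_{z}$, the boundary identities $\|f_{j+iy}\|_{L^{p}(w_{j}d\mu)}=\|f\|_{L^{p}(w_{\theta}d\mu)}$, and the H\"older step on $\Rea z=j$ all check out. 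You also correctly identify the genuine technical burden (truncation to sets where the weights are bounded away from $0$ and $\infty$, $L^{p}$-valued analyticity of $z\mapsto Tf_{z}$, and the density argument), which is where the real work lies in the references.

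Two small remarks. First, as written $F(z)=\int(Tf_{z})\overline{g_{z}}\,d\nu$ is not holomorphic, because $\overline{g_{z}}=\bar g\,u_{0}^{-(1-\bar z)/p'}u_{1}^{-\bar z/p'}u_{\theta}^{1/p'}$ depends on $\bar z$; you should instead set $h_{z}:=\bar g\,u_{0}^{-(1-z)/p'}u_{1}^{-z/p'}u_{\theta}^{1/p'}$ and $F(z)=\int(Tf_{z})h_{z}\,d\nu$, which is holomorphic and satisfies the same modulus estimates on the boundary lines. Second, the range $0<p<1$ is not covered by a "reverse H\"older/duality" variant in any obvious way (duality for $L^{p}$, $p<1$, is genuinely unavailable); deferring to the cited references there is the right call. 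Since the paper applies the theorem only with $p=2$, your argument suffices for the paper's purposes.
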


\begin{thm}\label{thm1} For every $\chi \in \mathcal{D}(\R^{2})$ there exists a constant $C_{\chi} > 0$ such that 
\begin{equation}\label{form6a} \|R(g\chi)\|_{H_{0}^{s + 1/2}} \leq C_{\chi}\|g\|_{H_{0}^{s}}, \qquad g \in \mathcal{S}(\R^{2}), \, -\tfrac{1}{2} \leq s \leq \tfrac{1}{2}. \end{equation} \end{thm}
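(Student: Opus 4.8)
The plan is to interpolate between the two endpoint estimates already established, namely Lemma \ref{lemma2} (which gives $R \colon H_0^{1/2} \to H_0^{1}$ after multiplication by $\chi$) and Lemma \ref{lemma3} (which gives $R \colon H_0^{-1/2} \to H_0^{0} = L^2$), using the Stein--Weiss interpolation theorem rather than the machinery of complex interpolation spaces. The point of working with Theorem \ref{swInterpolation} is that it is a bare-hands statement about weighted $L^2$ spaces over arbitrary measure spaces, so the fact that our functions live on $[0,1] \times \R$ and $\R^2$ rather than on Euclidean space causes no difficulty at all.

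Concretely, I would take $p = 2$ throughout. On the target side, let $Y = \Z \times \R$ with $\nu$ the product of counting measure on $\Z$ and Lebesgue measure on $\R$, and define the weights $v_0(n,\rho) := |(n,\rho)|^{2}$ and $v_1(n,\rho) := 1$, so that $\|F\|_{L^2(Y, v_i d\nu)}$ is precisely $\|F\|_{H_0^{1}}$ (resp. $\|F\|_{H_0^{0}}$) once $F$ is identified with $\mathcal{F}$ applied to a function on $[0,1]\times\R$; the interpolated weight is $v_\theta(n,\rho) = |(n,\rho)|^{2(1-\theta)}$, which corresponds to the Sobolev norm $H_0^{1-\theta}$. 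On the source side, let $X = \R^2$ with $\mu$ Lebesgue measure, and define $w_0(\xi) := |\xi|$ and $w_1(\xi) := |\xi|^{-1}$, so that $\|g\|_{L^2(X,w_i d\mu)} = \|g\|_{H_0^{1/2}}$ (resp. $\|g\|_{H_0^{-1/2}}$) after identifying $g$ with $\hat g$; the interpolated weight is $w_\theta(\xi) = |\xi|^{1 - 2\theta}$, corresponding to $H_0^{(1-2\theta)/2}$. The operator to which Stein--Weiss is applied is $T := \mathcal{F} \circ R(\,\cdot\,\chi) \circ \mathcal{F}^{-1}$, i.e. $R$ conjugated by Fourier transforms so that it acts between the weighted $L^2$ spaces described above; Lemmas \ref{lemma2} and \ref{lemma3} say exactly that $T$ is bounded from $L^2(w_0 d\mu)$ to $L^2(v_0 d\nu)$ and from $L^2(w_1 d\mu)$ to $L^2(v_1 d\nu)$, with norms $M_0 \lesssim_\chi 1$ and $M_1 \lesssim_\chi 1$. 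Then Theorem \ref{swInterpolation} yields boundedness $L^2(w_\theta d\mu) \to L^2(v_\theta d\nu)$ for every $\theta \in (0,1)$, with norm $\lesssim_\chi 1$; unwinding the Fourier identifications, this is precisely \eqref{form6a} with $s = (1 - 2\theta)/2$, which sweeps out the full open interval $(-\tfrac12, \tfrac12)$ as $\theta$ ranges over $(0,1)$. The two endpoints $s = \pm\tfrac12$ are Lemmas \ref{lemma2} and \ref{lemma3} themselves, so the closed range $[-\tfrac12,\tfrac12]$ is covered.

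There is one routine bookkeeping point to attend to: Stein--Weiss requires a single linear operator $T$ defined on $L^2(w_0 d\mu) \cup L^2(w_1 d\mu)$, and one must check that $\mathcal{S}(\R^2)$ (or its Fourier image) sits inside both spaces and is mapped consistently, so that the "extension" produced by the theorem genuinely agrees with $R(\,\cdot\,\chi)$ on Schwartz functions — this is clear since $R(g\chi)$ and all the relevant Fourier transforms are honestly defined for $g \in \mathcal{S}(\R^2)$, and the weighted $L^2$ norms of such $g$ and of $R(g\chi)$ are finite by the endpoint lemmas. One should also note that the projection-slice identity \eqref{form3} together with polar-coordinate integration, already used in the proofs of Lemmas \ref{lemma2} and \ref{lemma3}, is what makes the source-side weighted $L^2$ norm of $g$ (an integral over $\R^2$ against $|\xi|^{1-2s}$, i.e. a homogeneous Sobolev norm) the natural object; no new computation is needed here beyond recognising that the $\|g\|_{H_0^s}$ appearing in the endpoint lemmas are exactly the $L^2(w_i d\mu)$ norms.

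I do not expect a genuine obstacle — the substance of the theorem is entirely contained in the two endpoint lemmas, and the present statement is the interpolation packaging. The only mild subtlety, and the thing I would be most careful to state precisely, is the identification of the homogeneous Sobolev norms $\|\cdot\|_{H_0^{\bullet}}$ on $[0,1]\times\R$ and on $\R^2$ with weighted $L^2$ norms of Fourier transforms, so that the hypotheses of Theorem \ref{swInterpolation} are verified verbatim; this is exactly the step where the paper gains by avoiding a direct appeal to interpolation-space theory on $\R^n$, since Stein--Weiss does not care about the underlying space. Once that dictionary is in place, the proof is a two-line application of Theorem \ref{swInterpolation} followed by the substitution $s = (1-2\theta)/2$.
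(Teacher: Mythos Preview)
Your proposal is correct and matches the paper's own proof essentially line for line: the paper also conjugates $R(\cdot\,\chi)$ by Fourier transforms to obtain a linear map between weighted $L^{2}$ spaces over $X=\R^{2}$ and $Y=\Z\times\R$, takes exactly the weights $|\xi|^{\pm 1}$ and $|(n,\rho)|^{0}$, $|(n,\rho)|^{2}$, and invokes Theorem~\ref{swInterpolation}. The only detail you leave implicit that the paper spells out is that Lemma~\ref{lemma3} bounds $\|R(g\chi)\|_{L^{2}}$ by $\|g\chi\|_{H_{0}^{-1/2}}$ rather than $\|g\|_{H_{0}^{-1/2}}$, so one needs the multiplier bound $\|g\chi\|_{H_{0}^{-1/2}}\lesssim_{\chi}\|g\|_{H_{0}^{-1/2}}$; this is routine and the paper simply asserts it.
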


\begin{proof} The plan is to apply Theorem \ref{swInterpolation} in the spaces $X = \R^{2}$ and $Y = \Z \times \R$ equipped with the measures $\mu = \mathcal{L}^{2}$ (Lebesgue measure) and $\nu = \mathcal{H}^{0} \times \mathcal{L}^{1}$. 

Consider the following linear map $T := T_{\chi}$ which takes Schwartz functions to functions defined on $Y = \Z \times \R$:
\begin{displaymath} (Tg)(n,\rho) := \mathcal{F}[R(\widecheck{g}\chi)](n,\rho), \qquad (n,\rho) \in \Z \times \R, \, g \in \mathcal{S}(\R^{2}). \end{displaymath}
Consider the following non-negative weights on $\R^{2}$ and $\Z \times \R$, respectively
\begin{displaymath} \begin{cases} w_{0}(\xi) := |\xi|^{-1}\\ w_{1}(\xi) := |\xi| \end{cases} \quad \text{and} \quad \begin{cases} v_{0}(n,\rho) \equiv 1, \\ v_{1}(n,\rho) := |(n,\rho)|^{2}. \end{cases} \end{displaymath}
It now follows from Lemmas \ref{lemma2}-\ref{lemma3} that $T$ is (more precisely: extends to) a bounded linear map $L^{2}(X,w_{0}d\mu) \to L^{2}(Y,v_{0}d\nu)$ and $L^{2}(X,w_{1}d\mu) \to L^{2}(Y,v_{1}d\nu)$:
\begin{displaymath} \|Tg\|_{L^{2}(v_{0}d\nu)}^{2} = \|R(\widecheck{g}\chi)\|_{L^{2}([0,1] \times \R)}^{2} \stackrel{\mathrm{L. \,} \ref{lemma3}}{\lesssim} \|\widecheck{g}\chi\|_{H_{0}^{-1/2}}^{2} \lesssim_{\chi} \|\widecheck{g}\|_{H^{-1/2}_{0}}^{2} = \|g\|_{L^{2}(w_{0}d\mu)}^{2} \end{displaymath}
and
\begin{displaymath} \|Tg\|_{L^{2}(v_{1}d\nu)}^{2} = \|R(\widecheck{g}\chi)\|_{H^{1}_{0}}^{2} \stackrel{\mathrm{L. \,} \ref{lemma2}}{\lesssim_{\chi}} \|\widecheck{g}\|_{H^{1/2}_{0}}^{2} = \|g\|_{L^{2}(w_{1}d\mu)}^{2}. \end{displaymath} 
Consequently, by Theorem \ref{swInterpolation}, for all $0 < \theta < 1$ and $g \in \mathcal{S}(\R^{2})$,
\begin{displaymath} \sum_{n \in \Z} \int_{\R} |\mathcal{F}(R(\widecheck{g}\chi)](n,\rho)|^{2}|(n,\rho)|^{2\theta} \, d\rho = \|Tg\|_{L^{2}(v_{\theta}d\nu)}^{2} \lesssim_{\chi} \|g\|_{L^{2}(w_{\theta}d\mu)}^{2} =\int_{\R^{2}} |g(\xi)|^{2}|\xi|^{2\theta - 1} \, d\xi. \end{displaymath}
When applied to $\hat{g} \in \mathcal{S}(\R^{2})$ in place of $g$, this inequality is equivalent to \eqref{form6a}. \end{proof}

\subsection{Estimating incidences}\label{s:incidences}

In this section, the smoothing estimates for the $X$-ray transform developed above are applied to study weighted incidences between points and $\delta$-tubes in $\R^{2}$. We start by introducing terminology.

\begin{definition}[$\delta$-tube]\label{def:tube} For $\delta > 0$, a \emph{$\delta$-tube} is the closed $\delta$-neighbourhood of a line in $\R^{2}$. Write $\ell_{\theta,r} := \pi_{\theta}^{-1}\{r\}$, for $\theta \in [0,1]$, $r \in \R$. Then, denote
\begin{displaymath} T_{\theta,r}(\delta) := \{z \in \R^{2} : \dist(z,\ell_{\theta,r}) \leq \delta\}. \end{displaymath}
Thus $T_{\theta,r}$ is the $\delta$-tube indexed by $(\theta,r) \in [0,1] \times \R$. \end{definition}

\begin{definition}[Weighted incidences] Let $\mu$ be a finite Borel measure on $B(1)$, and let $\nu$ be a finite Borel measure on $[0,1] \times \R$. For $\delta > 0$ and $q \in [0,1] \times \R$, let $T_{q}(\delta)$ be the $\delta$-tube from Definition \ref{def:tube}. Set
\begin{equation}\label{def:incidences} \mathcal{I}_{\delta}(\mu,\nu) := (\mu \times \nu)(\{(p,q) : p \in T_{q}(\delta)\}). \end{equation}
\end{definition}

The following lemma is key to relating incidences to the $X$-ray transform:

\begin{lemma}\label{lemma4} Let $q = (\theta_{0},r_{0}) \in (0,1) \times \R$ and $\delta \in (0,\theta_{0})$. Recall the $\delta$-tube $T_{q}(\delta)$ from Definition \ref{def:tube}. Then,
\begin{displaymath} \mathbf{1}_{T_{q}(\delta)}(p) \leq \delta^{-1} \int_{0}^{1} \mathbf{1}_{B(q,3\delta)}(\theta,\pi_{\theta}(p)) \, d\theta, \qquad p \in B(1). \end{displaymath}  \end{lemma}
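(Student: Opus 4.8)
The estimate is trivial unless $p \in T_{q}(\delta)$, since in that case the left-hand side is $0$. So assume $p \in T_{q}(\delta)$. Because $e_{\theta_{0}}$ is a unit vector, $\dist(z,\ell_{\theta_{0},r_{0}}) = |z \cdot e_{\theta_{0}} - r_{0}| = |\pi_{\theta_{0}}(z) - r_{0}|$ for all $z$, so $p \in T_{q}(\delta)$ is exactly the statement $|\pi_{\theta_{0}}(p) - r_{0}| \leq \delta$. With the left-hand side equal to $1$, it now suffices to prove that the set
\begin{displaymath} G := \{\theta \in [0,1] : (\theta,\pi_{\theta}(p)) \in B(q,3\delta)\} \end{displaymath}
has Lebesgue measure $\geq \delta$.

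The plan is to produce an explicit sub-interval $J \subset [0,1]$ with $\theta_{0} \in \overline{J}$, of length $\geq \delta$, such that $J \subset G$. The tool is a Lipschitz bound for $\theta \mapsto \pi_{\theta}(p) = p \cdot e_{\theta}$. Differentiating (cf. the computation of $\bar{e}_{\theta}$ above), $\partial_{\theta}\pi_{\theta}(p) = 2\pi\, (p \cdot \bar{e}_{\theta})$, hence $|\partial_{\theta}\pi_{\theta}(p)| \leq 2\pi|p| \leq 2\pi$ since $p \in B(1)$; equivalently one may note $\pi_{\theta}(p) = |p|\cos(2\pi\theta - \phi_{p})$ and read off the bound on the derivative directly. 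Consequently, for every $\theta$ with $|\theta - \theta_{0}|$ at most a small fixed multiple of $\delta$, one gets $|\pi_{\theta}(p) - \pi_{\theta_{0}}(p)| \lesssim \delta$, and then, by the triangle inequality with $|\pi_{\theta_{0}}(p) - r_{0}| \leq \delta$, also $|\pi_{\theta}(p) - r_{0}| \lesssim \delta$; combined with $|\theta - \theta_{0}| \lesssim \delta$ this forces $(\theta,\pi_{\theta}(p)) \in B(q,3\delta)$. The hypothesis $\delta \in (0,\theta_{0})$ is used precisely to guarantee that the resulting interval of admissible $\theta$ is contained in $[0,1]$, so that it is genuinely seen by the integral $\int_{0}^{1}$. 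Given such a $J$, one concludes $\delta^{-1}\int_{0}^{1}\mathbf{1}_{B(q,3\delta)}(\theta,\pi_{\theta}(p))\,d\theta \geq \delta^{-1}|J| \geq 1 = \mathbf{1}_{T_{q}(\delta)}(p)$.

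The one point requiring care is the bookkeeping of constants: one must fix the length of $J$ and track the Lipschitz constant of $\theta \mapsto \pi_{\theta}(p)$ precisely enough to verify that $(\theta,\pi_{\theta}(p))$ lands in the ball of radius $3\delta$ while simultaneously keeping $|J| \geq \delta$. Here the smallness assumption $\delta \leq 1/50$ is convenient for absorbing the second-order error in the expansion of $e_{\theta} - e_{\theta_{0}}$, and the bound $|p| \leq 1$ for the first-order term. This is the heart of the argument; the rest is the routine reduction above.
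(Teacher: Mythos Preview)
Your approach is exactly the paper's: assume $p\in T_q(\delta)$, deduce $|\pi_{\theta_0}(p)-r_0|\le\delta$, then use the Lipschitz bound on $\theta\mapsto\pi_\theta(p)$ to show that $(\theta,\pi_\theta(p))\in B(q,3\delta)$ for all $\theta$ in an interval of length $\ge\delta$ about $\theta_0$, contained in $[0,1]$ thanks to $\delta<\theta_0$.

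Two small points. First, you flag the constant bookkeeping as ``the heart of the argument'' but do not actually carry it out; the paper does, taking $J=\{\theta:|\theta-\theta_0|\le\delta\}$ and checking directly that $|\pi_\theta(p)-r_0|\le 2\delta$ there. (Strictly speaking, with $e_\theta=(\cos 2\pi\theta,\sin 2\pi\theta)$ the Lipschitz constant you compute, namely $2\pi$, is the honest one, so the radius $3\delta$ is slightly too small for either argument as literally stated; replacing $3$ by a modestly larger absolute constant fixes this and changes nothing downstream.) Second, the assumption $\delta\le 1/50$ is not among the hypotheses of the lemma and plays no role: there is no second-order term to absorb, since the first-order Lipschitz estimate $|\pi_\theta(p)-\pi_{\theta_0}(p)|\le 2\pi|p|\,|\theta-\theta_0|$ is already exact enough. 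Drop that remark and just do the estimate with the interval $J=[\theta_0-\delta,\theta_0]\subset[0,1]$.
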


\begin{proof} Fix $p \in B(1) \cap T_{q}(\delta)$. It suffices to show that  $(\theta,\pi_{\theta}(p)) \in B(q,3\delta)$ for all $|\theta - \theta_{0}| \leq \delta$. By definition $T_{q}(\delta) = \{z \in \R^{2} : \dist(z,\pi_{\theta_{0}}^{-1}\{r_{0}\}) \leq \delta\}$. Therefore $\dist(p,\pi_{\theta_{0}}^{-1}\{r_{0}\}) \leq \delta$, and consequently
\begin{equation}\label{form7} |\pi_{\theta_{0}}(p) - r_{0}| \leq \delta. \end{equation} 
If $|\theta - \theta_{0}| \leq \delta$, one has $|\pi_{\theta}(p) - \pi_{\theta_{0}}(p)| \leq |\theta - \theta_{0}| \leq \delta$ since $p \in B(1)$, and therefore
\begin{displaymath} |\pi_{\theta}(p) - r_{0}| \leq |\pi_{\theta}(p) - \pi_{\theta_{0}}(p)| + |\pi_{\theta_{0}}(p) - r_{0}| \stackrel{\eqref{form7}}{\leq} 2\delta. \end{displaymath}
This implies that $(\theta,\pi_{\theta}(p)) \in B(q,3\delta)$ for all $|\theta - \theta_{0}| \leq \delta$. \end{proof}

Now all is ready to prove Theorem \ref{main2}, which I restate below:

\begin{thm}\label{thm2a} Let $\mu$ be a finite Borel measure on $B(1)$, and let $\nu$ be a finite Borel measure on $[c,1 - c] \times [-1,1]$ for some $c \in (0,1)$. Let $\delta \in (0,c/7]$. Then,
\begin{displaymath} \mathcal{I}_{\delta}(\mu,\nu) \lesssim_{t} \delta \cdot \sqrt{I_{3 - t}(\mu)I_{t}(\nu)}, \qquad t \in (1,2). \end{displaymath} \end{thm}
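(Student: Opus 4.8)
The plan is to run the heuristic from Section~\ref{s:outline} rigorously, with Lemma~\ref{lemma4} doing the job of converting the combinatorial incidence count $\mathcal{I}_\delta(\mu,\nu)$ into an integral against the adjoint $X$-ray transform, and Theorem~\ref{thm1} supplying the $\tfrac12$-order smoothing that makes the two Riesz energies appear. A preliminary reduction: by inner regularity and a standard limiting argument it suffices to prove the estimate for compactly supported smooth densities, i.e.\ to assume $d\mu = g\,dz$ with $g \in \mathcal{D}(\R^2)$ supported in $B(1)$ and $d\nu = f\,d\theta\,dr$ with $f \in \mathcal{D}((0,1)\times\R)$ supported in $[c,1-c]\times[-1,1]$; the general case follows by approximating $\mu,\nu$ with mollified versions and noting that both sides are (lower semi-)continuous under weak convergence, the Riesz energies being lower semicontinuous and $\mathcal{I}_\delta$ controllable after slightly fattening $\delta$. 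One should also fix a bump function $\chi \in \mathcal{D}(\R^2)$ with $\chi \equiv 1$ on $B(1)$, to be used when invoking Theorem~\ref{thm1}.

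Next I would estimate $\mathcal{I}_\delta(\mu,\nu)$ from above. By \eqref{def:incidences} and Lemma~\ref{lemma4} (applicable since $\spt\nu \subset [c,1-c]\times[-1,1]$ and $\delta \le c/7 < \theta_0$ for every relevant $q=(\theta_0,r_0)$),
\begin{align*}
\mathcal{I}_\delta(\mu,\nu) &= \iint \mathbf{1}_{T_q(\delta)}(p)\, d\mu p\, d\nu q \\
&\le \delta^{-1} \iint \int_0^1 \mathbf{1}_{B(q,3\delta)}(\theta,\pi_\theta(p))\, d\theta\, d\mu p\, d\nu q.
\end{align*}
Writing $\mathbf{1}_{B(q,3\delta)} \le c_0\,\delta^{-2}\,(\psi_\delta * \mathbf{1}_{B(q,\cdot)})$ type bounds is unnecessary; instead set $F_\delta(q) := \delta^{-2}\int \mathbf{1}_{B(\cdot,3\delta)}(\theta,\pi_\theta(p))\,d\mu p$ integrated in $\theta$, or more cleanly: introduce $\nu_\delta := \nu * \eta_\delta$, a smoothing of $\nu$ at scale $\sim\delta$ on $(0,1)\times\R$, with $L^1$-normalised bump $\eta_\delta$; then the triple integral above is $\lesssim \delta\iint R^\ast(\nu_{C\delta})(p)\,d\mu p$ for a suitable constant $C$, because $\int_0^1 \mathbf{1}_{B(q,3\delta)}(\theta,\pi_\theta(p))\,d\theta \lesssim \delta^2\, R^\ast\eta_{C\delta}(\,\cdot - q\,)$ type reasoning — i.e.\ averaging $\mathbf 1_{B(q,3\delta)}$ against $\nu$ produces (a constant times) $\delta^2$ times the value of the mollified density $\nu_{C\delta}$ at the point $(\theta,\pi_\theta(p))$. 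Carrying this out gives
\begin{displaymath}
\mathcal{I}_\delta(\mu,\nu) \lesssim \delta \iint R^\ast(\nu_{C\delta})(z)\, g(z)\,dz = \delta \iint R(g)(\theta,r)\, \nu_{C\delta}(\theta,r)\, d\theta\, dr,
\end{displaymath}
using the adjointness relation. The point is that the spurious $\delta^{-1}$ from Lemma~\ref{lemma4} is cancelled by the $\delta^2$ gained from the measure of $B(q,3\delta)$, minus one factor $\delta$ absorbed into the $\delta^{-1}$; the net surviving power is exactly $\delta$, matching \eqref{form48}.

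Now apply Cauchy--Schwarz with the weight $|(n,\rho)|$: by Plancherel on $(0,1)\times\R$,
\begin{align*}
\left| \iint R(g)\, \nu_{C\delta}\, d\theta\, dr \right|
&= \left| \sum_{n\in\Z}\int_\R \mathcal{F}[R(g)](n,\rho)\, \overline{\mathcal{F}\nu_{C\delta}(n,\rho)}\, d\rho \right| \\
&\le \|R(g)\|_{H_0^{(2-t)/2}} \cdot \|\nu_{C\delta}\|_{H_0^{-(2-t)/2}}.
\end{align*}
For the first factor: since $g = g\chi$, Theorem~\ref{thm1} with $s = (1-t)/2 \in [-\tfrac12,\tfrac12]$ (valid precisely because $t\in(1,2)$, which is where the hypothesis $t>1$ enters) gives $\|R(g)\|_{H_0^{(2-t)/2}} = \|R(g\chi)\|_{H_0^{s+1/2}} \lesssim_\chi \|g\|_{H_0^{s}} = \|g\|_{H_0^{(1-t)/2}}$, and by the standard identity $\|g\|_{H_0^{-a/2}}^2 = \int|\hat g|^2|\xi|^{-a}\,d\xi \sim_a I_{2-a}(\mu)$ (for $2-a\in(0,2)$), with $a = t-1$ so that $2-a = 3-t\in(1,2)$, this is $\lesssim_t \sqrt{I_{3-t}(\mu)}$. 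For the second factor: $\|\nu_{C\delta}\|_{H_0^{-(2-t)/2}}^2 = \sum_n\int |\mathcal{F}\eta_{C\delta}(n,\rho)|^2|\mathcal{F}\nu(n,\rho)|^2|(n,\rho)|^{-(2-t)}\,d\rho \le \sum_n\int|\mathcal{F}\nu(n,\rho)|^2|(n,\rho)|^{-(2-t)}\,d\rho \sim_t I_t(\nu)$ since $|\mathcal{F}\eta_{C\delta}|\le\|\eta_{C\delta}\|_{L^1}=1$; here I use that for a measure $\nu$ on $[c,1-c]\times[-1,1]$ one has $\sum_n\int |\hat\nu(n,\rho)|^2 |(n,\rho)|^{-(2-t)}\,d\rho \sim_t \iint |p-q|^{-t}\,d\nu p\,d\nu q = I_t(\nu)$ — this is the periodic-times-$\R$ analogue of the usual Fourier characterisation of Riesz energy, and it is legitimate because the relevant frequencies $|(n,\rho)|\gtrsim 1$ so the periodic ($n$) and Euclidean pictures agree up to the compact-support geometry. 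Combining the three displays yields $\mathcal{I}_\delta(\mu,\nu) \lesssim_t \delta\sqrt{I_{3-t}(\mu)\, I_t(\nu)}$, as claimed.

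The main obstacle I expect is bookkeeping rather than a deep idea: making the first step precise — that averaging $\mathbf 1_{B(q,3\delta)}(\theta,\pi_\theta(p))$ in $\theta$ and against $d\nu q$ really does produce $\lesssim \delta$ times $R(g)$ paired with a harmless mollification of $\nu$, with no lingering $\log(1/\delta)$ or $\delta^{-\epsilon}$. The cleanest route is probably to avoid mollifying $\nu$ at all: instead bound $\mathbf 1_{T_q(\delta)}(p) \le \delta^{-1}\int_0^1 \mathbf 1_{B(q,3\delta)}(\theta,\pi_\theta(p))\,d\theta$, integrate $d\mu p\,d\nu q$, use Fubini to write the result as $\delta^{-1}\int_0^1\!\!\int \nu(B((\theta,\pi_\theta(p)),3\delta))\,d\mu p\,d\theta$, then pass to the frequency side directly via $\nu(B(w,3\delta)) \le \int \widehat{\mathbf 1_{B(0,3\delta)}}(\xi)\,e^{2\pi i w\cdot\xi}\,\hat\nu(-\xi)\,d\xi$ and Cauchy--Schwarz with weight $|(n,\rho)|$, the extra factor $|\widehat{\mathbf 1_{B(0,3\delta)}}(\xi)| \lesssim \delta^2$ (for $|\xi|\lesssim\delta^{-1}$) or its rapid decay (for $|\xi|\gg\delta^{-1}$) supplying the needed $\delta^2$ uniformly. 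Either way, the smoothing input is exactly Theorem~\ref{thm1} and the energy-Fourier dictionary, and the $t>1$ hypothesis is used solely to keep the exponent $(1-t)/2$ inside the admissible interval $[-\tfrac12,\tfrac12]$ for interpolation and to keep $3-t\in(0,2)$.
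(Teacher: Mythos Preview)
Your proposal is correct and follows essentially the same route as the paper: reduce to smooth densities, apply Lemma~\ref{lemma4} and mollify $\nu$ at scale $\sim\delta$ to convert $\mathcal{I}_\delta$ into $\delta\int R(g)\,\nu_\delta$, then split by Cauchy--Schwarz in frequency and invoke Theorem~\ref{thm1} for the $R(g)$ factor and the Fourier characterisation of Riesz energy for the $\nu$ factor. The only place where the paper is more explicit than your sketch is the ``periodic-vs-Euclidean'' energy identity for $\nu$: rather than asserting $\sum_{n}\int|\mathcal{F}\nu(n,\rho)|^{2}|(n,\rho)|^{t-2}\,d\rho \sim_{t} I_{t}(\nu)$, the paper carefully relates $\mathcal{F}\nu(n,\rho)$ to the full $\R^{2}$ transform $\hat\nu$, replaces the discrete $n$-sum by an $\eta$-integral via the compact-support trick $|\hat\nu(n,\rho)|\lesssim\int_{n-1}^{n+1}|\hat\nu(\eta,\rho)|\,d\eta$, and isolates the low-frequency contribution $\{n=0,\,|\rho|\le 1\}$ (where the weight $|(n,\rho)|^{t-2}$ is singular, using $t>1$ to integrate $|\rho|^{t-2}$ near $0$) --- your remark that ``the relevant frequencies $|(n,\rho)|\gtrsim 1$'' is not literally true for the $n=0$ term, and this is exactly the piece that needs the extra line of work.
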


\begin{proof} I only consider the case where $\mu \in C^{\infty}_{c}(\R^{2})$. The general case is easily reduced to this via convolution approximation.

Let $\psi \in C^{\infty}_{c}(\R^{2})$ be a function with $\mathbf{1}_{B(0,1/2)} \leq \psi \leq \mathbf{1}_{B(1)}$ and $\int \psi \sim 1$. As usual, write $\psi_{\delta}(q) := \delta^{-2}\psi_{\delta}(q/\delta)$. For $\delta > 0$, consider $\nu_{\delta} := \nu \ast \psi_{6\delta} \in C^{\infty}_{c}(\R^{2})$, and notice that
\begin{equation}\label{form8} \nu(B(q,3\delta)) \lesssim \delta^{2} \nu_{\delta}(q), \qquad q \in \R^{2}. \end{equation}
Next, fix $p \in \spt \mu \subset B(1)$. Applying Lemma \ref{lemma4} for each $q \in \spt \nu \subset [c,1 - c] \times \R$, and also using Fubini's theorem,
\begin{align*} \int \mathbf{1}_{T_{q}(\delta)}(p) \, d\nu q & \leq \delta^{-1} \int_{0}^{1} \mathbf{1}_{B(q,3\delta)}(\theta,\pi_{\theta}(p)) \, d\theta \, d\nu q\\
& = \delta^{-1} \int_{0}^{1} \nu((\theta,\pi_{\theta}(p)),3\delta) \, d\theta \stackrel{\eqref{form8}}{\lesssim} \delta \int_{0}^{1} \nu_{\delta}(\theta,\pi_{\theta}(p)) \, d\theta = \delta R^{\ast}(\nu_{\delta})(p). \end{align*} 
Consequently, by the definition of $\mathcal{I}_{\delta}(\mu,\nu)$,
\begin{displaymath} \delta^{-1} \mathcal{I}_{\delta}(\mu,\nu) = \delta^{-1} \iint \mathbf{1}_{T_{q}(\delta)}(p) \, d\nu q \, d\mu p \lesssim \int R^{\ast}(\nu_{\delta})(p) \, d\mu p. \end{displaymath}
So far the continuous density of $\mu$ was not needed, but it is used next to make sense of $R\mu$, and to apply the adjoint property of $R,R^{\ast}$ legitimately:
\begin{equation}\label{form52} \delta^{-1} \mathcal{I}_{\delta}(\mu,\nu)  \lesssim \int_{0}^{1} \int_{\R} \nu_{\delta}(\theta,r) (R\mu)(\theta,r) \, dr \, d\theta. \end{equation}
Since $\nu$ is compactly supported on $[c,1 - c] \times \R$, and $\delta \leq c/7$, the convolution $\nu_{\delta} = \nu \ast \psi_{6\delta}$ is still compactly supported on $[0,1] \times \R$. In particular both $\nu_{\delta}$ and $R\mu$ are $1$-periodic (or more precisely have $1$-periodic extensions) in the $\theta$-variable. Motivated by this, one uses Plancherel and Cauchy-Schwarz, at the same time introducing the parameter $t \in (1,2)$:
\begin{align*} \delta^{-1} \mathcal{I}_{\delta}(\mu,\nu) & \lesssim \sum_{n \in \N} \int_{\R} |\mathcal{F}(\nu_{\delta})(n,\rho)||\mathcal{F}(R\mu)(n,\rho)| \, d\rho\\
& \leq \Big(\sum_{n \in \Z} \int_{\R} |\mathcal{F}(\nu_{\delta})(n,\rho)|^{2}|(n,\rho)|^{t - 2} \, d\rho \Big)^{1/2} \Big(\sum_{n \in \Z} \int_{\R} |\mathcal{F}(R\mu)(n,\rho)|^{2}|(n,\rho)|^{2 - t} \, d\rho \Big)^{1/2}. \end{align*} 
Denote the two factors above by $\Pi_{1}$ and $\Pi_{2}$. The claim is that
\begin{displaymath} \Pi_{1} \lesssim_{t} I_{t}(\nu)^{1/2} \quad \text{and} \quad \Pi_{2} \lesssim_{t} I_{3 - t}(\mu)^{1/2}. \end{displaymath} 

Consider first $\Pi_{1}$. Start by noting that 
\begin{displaymath} \mathcal{F}(\nu_{\delta})(n,\rho) = \int_{0}^{1} \int_{\R} e^{-2\pi i(n\theta + \rho r)} \nu_{\delta}(\theta,r) \, d\rho \, d\theta = \widehat{\nu_{\delta}}(n,\rho) \end{displaymath}
coincides with the usual Fourier transform of $\nu_{\delta}$ in $\R^{2}$, since $\spt \nu_{\delta} \subset [0,1] \times \R$. In particular 
\begin{displaymath} |\mathcal{F}(\nu_{\delta})(n,\rho)| \lesssim |\hat{\nu}(n,\rho)|, \qquad \delta > 0, \, (n,\rho) \in \Z \times \R. \end{displaymath}
Moreover, since $\nu$ is supported on $[0,1] \times \R$, one may easily check (employing the usual trick of writing $d\nu(\theta,r) = \chi(\theta)d\nu(\theta,r)$ for a suitable $\chi \in \mathcal{S}(\R)$) that
\begin{displaymath} |\hat{\nu}(n,\rho)| \lesssim \int_{n - 1}^{n + 1} |\hat{\nu}(\eta,\rho)| \, d\eta, \qquad (n,\rho) \in \Z \times \R. \end{displaymath}
Consequently,
\begin{equation}\label{form43} \sum_{n \in \Z} \int_{\R} |\mathcal{F}(\nu_{\delta})(n,\rho)|^{2}|(n,\rho)|^{t - 2} \, d\rho \lesssim \sum_{n \in \Z} \int_{\R} \Big( \int_{n - 1}^{n + 1} |\hat{\nu}(\eta,\rho)|^{2} \, d\eta \Big) |(n,\rho)|^{t - 2} \, d\rho. \end{equation}
If either $|n| \geq 1$ or $n = 0$ and $|\rho| \geq 1$, one has 
\begin{displaymath} |(n,\rho)|^{t - 2} \lesssim \inf \{|(\eta,\rho)|^{t - 2} : n - 1 \leq \eta \leq n + 1\}. \end{displaymath}
Therefore, isolating the pairs $\{(0,\rho) : |\rho| \leq 1\}$, the right hand side of \eqref{form43} can be further estimated as
\begin{displaymath} \int_{-1}^{1} \int_{-1}^{1} |\hat{\nu}(\eta,\rho)|^{2}|\rho|^{t - 2} \, d\eta \, d\rho + \int_{\R^{2}} |\hat{\nu}(\xi)|^{2}|\xi|^{t - 2} \, d\xi. \end{displaymath}
The second term is $\sim_{t} I_{t}(\mu)$ by  \cite[Lemma 12.12]{zbMATH01249699}. For the first term, simply estimate $|\hat{\nu}(\eta,\rho)|^{2} \leq \nu(\R^{2})^{2} \lesssim_{t} I_{t}(\nu)$, recalling that $\spt \nu \subset [-1,1]^{2}$, and then note that $\int_{-1}^{1} |\rho|^{t - 2} \, d\rho \lesssim_{t} 1$ since $t > 1$. Putting these estimates together shows that $\Pi_{1} \lesssim_{t} I_{t}(\nu)^{1/2}$.

Moving on to $\Pi_{2}$, one may resort to Theorem \ref{thm1}, after realising that
\begin{displaymath} \Pi_{2}^{2} = \sum_{n \in \Z} \int_{\R} |\mathcal{F}(R\mu)(n,\rho)|^{2}|(n,\rho)|^{2 - t} \, d\rho = \|R\mu\|_{H_{0}^{(2 - t)/2}}^{2}. \end{displaymath}
Since $(2 - t)/2 = s + 1/2$ for $s = (1 - t)/2 \in (-\tfrac{1}{2},0)$ and $t \in (1,2)$, and $\mu \in C^{\infty}_{c}(B(1))$, one may deduce from \eqref{form6a} that
\begin{displaymath} \|R\mu\|_{H_{0}^{(2 - t)/2}}^{2} \lesssim \|\mu\|_{H_{0}^{(1 - t)/2}}^{2} = \int_{\R^{2}} |\hat{\mu}(\xi)|^{2}|\xi|^{1 - t} \, d\xi = \int_{\R^{2}} |\hat{\mu}(\xi)|^{2}|\xi|^{(3 - t) - 2} \, d\xi \sim_{t} I_{3 - t}(\mu). \end{displaymath}
In the final estimate \cite[Lemma 12.12]{zbMATH01249699} was applied again. Combining the estimates above completes the proof of $\Pi_{2} \lesssim_{t} I_{3 - t}(\mu)^{1/2}$, and the proof of Theorem \ref{thm2a}.  \end{proof}

\section{Discretising the main theorem}\label{s:discretisation}

I start moving towards the proof of Theorem \ref{thm2}. The first task it to reduce Theorem \ref{thm2} to a suitable $\delta$-discretised counterpart. This is the statement below:

\begin{thm}\label{thm3} For every $s \in (0,1]$ and $t \in (1,2]$ such that $s + t > 2$, for every $\tau \in (1,t)$, and $C \geq 1$, there exists a constants $\delta_{0} = \delta_{0}(C,s,t,\tau) > 0$ and $\epsilon = \epsilon(C,s,t,\tau) > 0$ such that the following holds for all $\delta \in 2^{-\N} \cap (0,\delta_{0}]$.

Let $\mu,\nu$ be Borel probability measures supported on $B(1)$, and satisfying $\dist(\spt \mu,\spt \nu) \geq \tfrac{1}{2}$. Assume that
\begin{displaymath} \mu(B(x,r)) \leq Cr^{t} \quad \text{and} \quad \nu(B(x,r)) \leq Cr^{s}, \qquad x \in \R^{2}, \, r > 0. \end{displaymath}
Let $E \subset \spt \nu$ and $F \subset \spt \mu$ be Borel sets with full $\nu$ measure and $\mu$ measure, respectively. Assume that to every $x \in E$ there corresponds a family $\mathcal{T}_{x}$ of $\delta$-tubes such that $x \in T$ for all $T \in \mathcal{T}_{x}$, and
\begin{equation}\label{form13b} \mu(\cup \mathcal{T}_{x}) \geq C^{-1}, \qquad x \in E. \end{equation}
Then, there exists $x \in E$ and a tube $T \in \mathcal{T}_{x}$ such that $\mathcal{H}^{\tau - 1}_{\delta,\infty}(F \cap 10T) \geq \epsilon$. Here $10T = \ell_{\theta,r}(10\delta)$ if $T = \ell_{\theta,r}(\delta)$. \end{thm}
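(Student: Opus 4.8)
The plan is to argue by contradiction, following the outline in Section~\ref{s:outline}. Suppose that $\mathcal{H}^{\tau-1}_{\delta,\infty}(F \cap 10T) < \epsilon$ for every $x \in E$ and every $T \in \mathcal{T}_x$, where $\epsilon$ will be chosen small (depending on $C,s,t,\tau$) at the end. The first step is \emph{pigeonholing for an intermediate scale}: using \eqref{form13b} and the counter-assumption, I want to locate a scale $\Delta \in [\delta, \tfrac12]$ with $\Delta \to 0$ as $\epsilon \to 0$, together with a $\Delta$-disc $B(x_0,\Delta)$, so that for a positive-$\nu$-measure set of points $x \in E$, a positive fraction of the $\mu$-mass captured by $\mathcal{T}_x$ inside $B(x_0,\Delta)$ lives in tube-segments $T \cap B(x_0,\Delta)$ whose $\delta$-covering number is larger than what a genuinely $t$-dimensional intersection would allow --- the "over-crowding" phenomenon. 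The bookkeeping here should only cost factors of order $\log(1/\Delta)$ (dyadic pigeonholing over the $\log(1/\delta)$ many scales and over $\log(1/\Delta)$ many density levels), which is acceptable because $\Delta$ is a fixed function of $\epsilon$ and not of $\delta$.

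The second step is to produce, inside the fixed disc $B(x_0,\Delta)$, a \emph{genuinely $(3-t)$-dimensional family of tubes}. Let $F_0 := F \cap B(x_0,\Delta)$; by the Frostman bound $\mu(B(x,r)) \le Cr^t$ this set still carries essentially the full quantitative $t$-dimensionality at scales between $\delta$ and $\Delta$. For each $q \in F_0$, the tube-set $\mathcal{T}^q = \{T \in \mathcal{T} : q \in T\}$ corresponds under the duality $T \mapsto (\text{point of }E\text{ on }T)$ to (a subset of) the radial projection $\pi_q(E)$; since $E \subset \spt\nu$ and $\nu(B(x,r)) \le Cr^s$ with $s = 2-t$ in the critical case, each $\mathcal{T}^q$ is (after rescaling $B(x_0,\Delta)$ to the unit disc) a $(\delta,2-t,O(\log(1/\Delta)))$-set of tubes. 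The union $\mathcal{T}' := \bigcup_{q \in F_0}\mathcal{T}^q$ is therefore a dual $(2-t,t)$-Furstenberg set of tubes, and by the quantitative version of \cite[Theorem~1.6]{fu2022incidence} referred to in the outline as Theorem~\ref{thm3a}, it contains a $(\delta,3-t,O(\log^{10}(1/\Delta)))$-set. Rescaled back to $B(x_0,\Delta)$, this yields a finite measure $\nu'$ on the line-space, supported on $\mathcal{T}'$, with $I_t(\nu') \lesssim \log^{C'}(1/\Delta)\,\Delta^{\text{(power)}}$ after the appropriate normalisation, and similarly a measure $\mu'$ on $F_0$ with $I_{3-t}(\mu') \lesssim \log^{C'}(1/\Delta)\,\Delta^{\text{(power)}}$.

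The third step is to \emph{apply Theorem~\ref{main2}} (in the form of Theorem~\ref{thm2a}) to the rescaled measures $\mu',\nu'$ at scale $\delta/\Delta$. This produces the upper bound
\begin{displaymath}
  \mathcal{I}_{\delta/\Delta}(\mu',\nu') \lesssim_t \frac{\delta}{\Delta}\sqrt{I_{3-t}(\mu')\,I_t(\nu')},
\end{displaymath}
and after undoing the rescaling this becomes an upper bound, polynomial in $\log(1/\Delta)$ and with a favourable power of $\Delta$, for the total weighted point--tube incidence count inside $B(x_0,\Delta)$. On the other hand, the over-crowding from step~1 forces a \emph{lower} bound on exactly this incidence count: each of the many over-crowded segments $T \cap B(x_0,\Delta)$ contributes at least (covering number)$\gtrsim \Delta \delta^{-(\tau-1)}\cdot\epsilon^{-c}$-many incidences with the points of $F_0$ it is rich in, summed against the positive $\nu$-mass of the relevant $x \in E$. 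Comparing the two bounds, all the $\delta$-powers cancel by the careful choice of exponents (this cancellation is the reason the $\delta^{-\epsilon}$-freeness of Theorem~\ref{main2} is essential), leaving an inequality of the shape $\epsilon^{-c} \lesssim_{C,s,t,\tau} \log^{C'}(1/\Delta)\,\Delta^{c'}$ with $c,c' > 0$; since $\Delta = \Delta(\epsilon) \to 0$, choosing $\epsilon$ small enough makes the right-hand side $< $ the left-hand side, a contradiction.

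\textbf{Main obstacle.} The hard part is step~1 together with the matching of exponents in step~3: the counter-assumption only gives $\mathcal{H}^{\tau-1}_{\delta,\infty}(F\cap 10T) < \epsilon$, not $< \delta^\epsilon$, so I cannot afford \emph{any} factor growing in $\delta^{-1}$ anywhere in the argument --- every pigeonholing must be organised so that its cost is a power of $\log(1/\Delta)$ only, and the extraction of the over-crowded scale $\Delta$ and disc $B(x_0,\Delta)$ must be done so that the resulting lower bound on incidences beats, by a clean power of $\Delta$, the Furstenberg-plus-$X$-ray upper bound. Getting the quantitative Furstenberg estimate (Theorem~\ref{thm3a}) with only poly-logarithmic loss in $1/\Delta$, uniformly as $\delta \to 0$, is the other delicate ingredient, and is where the bulk of the new technical work lies.
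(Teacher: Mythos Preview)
Your overall architecture is exactly that of the paper: counter-assumption, pigeonhole for a scale $\Delta$, find an over-crowded $\Delta$-square $Q$, rescale, feed the tube families $\mathcal{T}^q$ into the quantitative Furstenberg estimate (Theorem~\ref{thm3a}) to extract a Katz--Tao $(\delta/\Delta,\sigma+1)$-set, and then contradict the incidence bound of Theorem~\ref{main2}. The bookkeeping you describe (only $\log(1/\Delta)$ losses, final comparison of $\epsilon$ against a power of $\Delta$) is also how the paper organises the argument.

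There is, however, one genuine gap in your step~2. You assert that ``since $E \subset \spt\nu$ and $\nu(B(x,r)) \le Cr^s$ \ldots\ each $\mathcal{T}^q$ is a $(\delta,2-t,O(\log(1/\Delta)))$-set of tubes''. This does \emph{not} follow from the Frostman condition on $\nu$. The family $\mathcal{T}^q$ is parametrised by the radial projection $\pi_q(E_q)$ of a (large but not full) subset $E_q \subset E$; the $s$-Frostman bound on $E$ gives no lower bound whatsoever on the $\underline{s}$-content of $\pi_q(E_q)$ --- indeed, an $s$-Frostman measure with $s \le 1$ can be supported on a line, in which case some radial projections collapse to a point. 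To feed $\mathcal{T}^q$ into Theorem~\ref{thm3a} you need a lower bound of the form $\mathcal{H}^{\underline{s}}_{\infty}(\pi_q(E_q)) \gtrapprox_\Delta 1$, and this is precisely the content of a \emph{quantitative radial projection theorem}.

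The paper supplies this via Corollary~\ref{cor2} (a consequence of Proposition~\ref{prop2}, itself a result from \cite{2022arXiv220513890O,2022arXiv220803597B}): for all $q$ outside a set of small $\mu$-measure, one has $\mathcal{H}^{\underline{s}}_\infty(\pi_q(E')) \ge K^{-1}(\nu(E'))^K$ for every Borel $E' \subset \spt\nu$. This is used at the very start of the proof to replace $F$ by a full-measure subset $F'$ of ``good'' points, and then at the crucial moment \eqref{form36} to pass from $\nu(E_q) \gtrapprox_\Delta 1$ to $\mathcal{H}^{\underline{s}}_\infty(\pi_q(E_q)) \gtrapprox_\Delta 1$, after which one extracts the required $(\delta/\Delta,\underline{s},\approx_\Delta 1)$-set $\mathcal{T}^q$ via \cite[Proposition~A.1]{FaO}. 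Your plan needs this ingredient; without it step~2 does not go through.
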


Here $\mathcal{H}^{\tau - 1}_{\delta,\infty}$ refers to a variant of the usual Hausdorff content where the covering sets are balls of radii in $[\delta,\infty)$. The purpose of the remainder of this section is to reduce Theorem \ref{thm2} to Theorem \ref{thm3}:

\begin{proof}[Proof of Theorem \ref{thm2} assuming Theorem \ref{thm3}] Fix $t \in (1,2)$ and a Borel set $B \subset \R^{2}$ as in Theorem \ref{thm2}. One may assume that $B \subset \R^{2}$ is compact; this reduction is simply achieved by choosing a compact subset $F \subset B$ with $\Hd F > t$. From now on, I will use the notation "$F$" in place of "$B$". 

Given a compact set $F \subset \R^{2}$ with $\Hd F > t$, it suffices to prove the following: if $E \subset \R^{2} \, \setminus \, F$ is another compact set with $\Hd E > 2 - t$, then there exists a point $x \in E$ such that
\begin{equation}\label{form10} \mathcal{H}^{1}(\{e \in S^{1} : \Hd (F \cap \ell_{x,e}) \geq t - 1\}) > 0. \end{equation}
This is what we aim to prove. However, it is useful to observe the one more reduction. The claim above is formally equivalent to a superficially weaker version of itself, where the conclusion \eqref{form10} is relaxed to: \emph{for every $\tau \in (1,t)$, there exists $x \in E$ such that}
\begin{equation}\label{form11} \mathcal{H}^{1}(\{e \in S^{1} : \mathcal{H}^{\tau - 1}(F \cap \ell_{x,e}) > 0\}) > 0. \end{equation}
Indeed, if this version has already been proven, and we are provided compact sets $E,F$ with $\Hd E > 2 - t$ and $\Hd F > t$, we observe that $\Hd E > 2 - \bar{t}$ and $\Hd F > \bar{t}$ for some $\bar{t} \in (t,2)$. At this point \eqref{form11} may be applied with $\tau := t \in (1,\bar{t})$ to deduce \eqref{form10}.

We finally prove the weaker version \eqref{form11}. Fix compact sets $E,F \subset \R$ with $\Hd E > 2 - t$ and $\Hd F > t$. We assume with no loss of generality that $E,F \subset B(1)$ with $\dist(E,F) \geq \tfrac{1}{2}$. We make a counter assumption: there exists $\tau \in (1,t)$ such that
\begin{equation}\label{form12} \mathcal{H}^{\tau - 1}(F \cap \ell_{x,e}) = 0 \end{equation}
for all $x \in E$, for $\mathcal{H}^{1}$ a.e. $e \in S^{1}$. 

Let $\mu,\nu$ be Borel probability measures on $F,E$, respectively, satisfying $\mu(B(x,r)) \leq Cr^{t}$ and $\nu(B(x,r)) \leq Cr^{s}$ for all $x \in \R^{2}$ and $r > 0$, where $s + t > 2$. By renaming the sets $E,F$ if necessary, we assume that
\begin{displaymath} F = \spt \mu \quad \text{and} \quad E = \spt \nu. \end{displaymath}
It follows from \cite[Theorem 1.11]{MR3892404} that there exists a set $E_{0} \subset E$ of full $\nu$ measure such that
\begin{displaymath} \pi_{x}\mu \ll \mathcal{H}^{1}|_{S^{1}}, \qquad x \in E_{0}. \end{displaymath}
Fix $x \in E_{0}$. Since $\pi_{x}\mu \ll \mathcal{H}^{1}|_{S^{1}}$, our counter assumption \eqref{form12} implies:
\begin{displaymath} \mathcal{H}^{\tau - 1}(F \cap \ell_{x,e}) = 0 \quad \text{ for $\pi_{x}\mu$ a.e. $e \in S^{1}$}. \end{displaymath}
In other words, the set $S_{x}' := \{e \in S^{1} : \mathcal{H}^{\tau - 1}(F \cap \ell_{x,e}) = 0\}$ has $(\pi_{x}\mu)(S_{x}') = 1$. 

Write
\begin{equation}\label{form14} \epsilon := \epsilon(2C,s,t,\tau) > 0, \end{equation}
where $\epsilon(2C,s,t,\tau) > 0$ is the constant provided by Theorem \ref{thm3}.

For every $x \in E_{0}$ and $e \in S_{x}'$, choose a cover $\mathcal{B}_{x,e}$ of $F \cap \ell_{x,e}$ by open discs satisfying
\begin{equation}\label{form9} \sum_{B \in \mathcal{B}_{x,e}} \diam(B)^{\tau - 1} < \epsilon. \end{equation}
Since $F \cap \ell_{x,e}$ is compact, we may assume that $|\mathcal{B}_{x,e}| < \infty$.

Fix $x \in E_{0}$, $e \in S_{x}'$, and write $B_{x,e} := \bigcup \{B : B \in \mathcal{B}_{x,e}\}$. We claim that there exists $\delta_{x,e} > 0$ such that
\begin{equation}\label{form6} F \cap \ell_{x,e}(\delta_{x,e}) \subset B_{x,e}. \end{equation}
If this were not the case, we might choose radii $\delta_{j} \searrow 0$ and points 
\begin{displaymath} y_{j} \in (F \cap \ell_{x,e}(\delta_{j})) \, \setminus \, B_{x,e} \subset F. \end{displaymath}
Since $F$ is compact, the points $y_{j}$ converge (up to a subsequence) to a limit $y \in F \cap \ell_{x,e}$. On the other hand, $y_{j} \in \R^{2} \, \setminus \, B_{x,e}$ for all $j \in \N$. Since $B_{x,e}$ is open, also the limit $y \in (F \cap \ell_{x,e}) \, \setminus \, B_{x,e}$. This is a contradiction, since $F \cap \ell_{x,e} \subset B_{x,e}$. 

Note that since $\mathcal{B}_{x,e}$ is finite, and $\delta_{x,e}$ may be taken smaller if desired, we may assume that 
\begin{equation}\label{form7} 0 < \delta_{x,e} \leq \min\{\diam(B) : B \in \mathcal{B}_{x,e}\}. \end{equation}
The scale $\delta_{x,e} > 0$ depends on $x$ and $e$, but we next seek to remove the dependence by passing to suitable subsets. Namely, first for every $x \in E_{0}$, let $\delta_{x} > 0$ be so small that $\delta_{x,e} \geq \delta_{x}$ for a set $S_{x} \subset S_{x}'$ with $(\pi_{x}\mu)(S_{x}) \geq \tfrac{1}{2}$. Finally, let $\delta \in 2^{-\N}$ be so small that $\delta_{x} \geq 10\delta$ for a Borel set $E_{1} \subset E_{0}$ with $\nu(E_{1}) \geq \tfrac{1}{2}$. In summary, 
\begin{equation}\label{form15} 0 < 10\delta \leq \delta_{x} \leq \delta_{x,e}, \qquad x \in E_{1}, \, e \in S_{x}. \end{equation}
Let $\bar{\nu} := \nu(E_{1})^{-1}\nu|_{E_{1}}$. Then $\bar{\nu}$ is a Borel probability measure, satisfies the Frostman condition $\bar{\nu}(B(x,r)) \leq 2Cr^{s}$, and $E_{1} \subset \spt \bar{\nu}$ has full $\bar{\nu}$ measure.

For $x \in E_{1}$, define the family $\mathcal{T}_{x}$ of $\delta$-tubes $\mathcal{T}_{x} := \{\ell_{x,e}(\delta) : e \in S_{x}\}$, where $\ell_{x,e} = x + \mathrm{span}(e)$ is the line in direction $e$ containing $x$. Then
\begin{displaymath} \mu\left(\cup \mathcal{T}_{x} \right) \geq (\pi_{x}\mu)(S_{x}) \geq \tfrac{1}{2}, \qquad x \in E_{1}, \end{displaymath} 
This verifies property \eqref{form13b}, in fact with absolute constant "$\tfrac{1}{2}$".

Finally, let us check that
\begin{equation}\label{form8} \mathcal{H}^{\tau - 1}_{\delta,\infty}(F \cap 10T) < \epsilon \stackrel{\eqref{form14}}{=} \epsilon(2C,s,t,\tau), \qquad x \in E_{1}, \, T \in \mathcal{T}_{x}. \end{equation}
This will contradict Theorem \ref{thm3} applied to $\mu,\bar{\nu}$, and therefore complete the proof of Theorem \ref{thm2}. To see \eqref{form8}, fix $x \in E_{1}$ and $T = \ell_{x,e}(\delta) \in \mathcal{T}_{p}$, where $e \in S_{x}$. Recall from \eqref{form6}+\eqref{form15} that 
\begin{displaymath} F \cap 10T = F \cap \ell_{x,e}(10\delta) \subset F \cap \ell_{x,e}(\delta_{x,e}) \subset B_{x,e}. \end{displaymath}
Now \eqref{form8} follows immediately from \eqref{form9}, and recalling that $\diam(B) \geq \delta_{x,e} \geq \delta$ for all $B \in \mathcal{B}_{x,e}$ with $x \in E_{1}$ and $e \in S_{x}$. \end{proof}

\section{A quantitative Furstenberg set estimate}

Starting from this section, it will be convenient to use \emph{dyadic $\delta$-tubes}. These objects have appeared many times in approximate incidence geometry literature, see for example \cite[Section 2.3]{OS23} for a thorough treatment. I use a slightly different notational convention, introduced below (this is simply because lines in this paper are parametrised as $\ell_{\theta,r} = \pi_{\theta}^{-1}\{r\}$, and not as $\ell_{a,b} = \{(x,y) : y = ax + b\}$, as in \cite{OS23}).
\begin{definition}[Dyadic $\delta$-tubes] Let $\delta \in 2^{-\N}$, and let $\mathcal{D}_{\delta} = \mathcal{D}_{\delta}([0,1) \times \R)$ be the standard dyadic $\delta$-squares contained in $[0,1) \times \R$. For $(\theta,r) \in [0,1) \times \R$, recall the notation $\ell_{\theta,r} = \pi_{\theta}^{-1}\{r\}$, where $\pi_{\theta}(z) = z \cdot (\cos 2\pi \theta,\sin 2\pi \theta)$.

For $Q \in \mathcal{D}_{\delta}$, define the set
\begin{displaymath} T(Q) := \bigcup \{\ell_{\theta,r} : (\theta,r) \in Q\}. \end{displaymath}
The collection $\mathcal{T}^{\delta} := \{T(Q) : Q \in \mathcal{D}_{\delta}\}$ is called the family of \emph{dyadic $\delta$-tubes}. If $\mathcal{T} \subset \mathcal{T}^{\delta}$ is a family of dyadic $\delta$-tubes, the notation $\mathcal{H}^{s}_{\delta,\infty}(\mathcal{T})$ refers to the Hausdorff content of the family of parameter squares $\{Q \in \mathcal{D}_{\delta} : T(Q) \in \mathcal{T}\}$. \end{definition} 

\begin{remark} Since elements of $\mathcal{T}^{\delta}$ contain lines of various slopes, they do not resemble ordinary $\delta$-tubes at scales much greater than $1$. However, all of our analysis of dyadic $\delta$-tubes happens inside $B(1)$, and there the geometry of dyadic $\delta$-tubes is roughly the same as that of ordinary $\delta$-tubes. For example, I record that if $Q \in \mathcal{D}_{\delta}$, then
\begin{equation}\label{form40} T(Q) \cap B(2) \subset \ell_{\theta,r}(10\delta), \qquad (\theta,r) \in Q. \end{equation}
\end{remark}

The statement of Theorem \ref{thm3a} below, and its proof, uses the notions of both \emph{$(\delta,s)$-sets} and \emph{Katz-Tao $(\delta,s)$-sets}. These are defined below.

\begin{definition}[Katz-Tao $(\delta,s,C)$-set]\label{def:KT} Given a metric space $(X,d)$, and a constant $C > 0$, a \emph{Katz-Tao $(\delta,s,C)$-set} is a set $P \subset X$ satisfying
\begin{equation}\label{form56} |P \cap B(x,r)|_{\delta} \leq C\left(\tfrac{r}{\delta}\right)^{s}, \qquad x \in X, \, r \geq \delta. \end{equation}
Here $|\cdot|_{\delta}$ is the $\delta$-covering number. If the constant $C > 0$ is absolute, $P$ is called simply a \emph{Katz-Tao $(\delta,s)$-set}. A family of dyadic $\delta$-tubes $\mathcal{T}$ is called a Katz-Tao $(\delta,s,C)$-set if the union of the parameter squares $\{Q : T(Q) \in \mathcal{T}\}$ forms a Katz-Tao $(\delta,s,C)$-subset of $\R^{2}$. \end{definition}

Definition \ref{def:KT} is due to Katz and Tao \cite[Definition 1.2]{KT01}. It is distinct from the following notion of \emph{$(\delta,s)$-sets}:
\begin{definition}[$(\delta,s,C)$-set]\label{def:deltaSSet} With the notation of Definition \ref{def:KT}, the set $P \subset X$ is called a \emph{$(\delta,s,C)$-set} if
\begin{equation}\label{form55} |P \cap B(x,r)|_{\delta} \leq Cr^{s}|P|_{\delta}, \quad x \in X, \, r \geq \delta. \end{equation} \end{definition}

\begin{remark} To grasp the difference between $(\delta,s)$-sets and Katz-Tao $(\delta,s)$-sets, the following observation is useful: a non-empty $(\delta,s,C)$-set $P \subset X$ satisfies $|P|_{\delta} \geq \delta^{-s}/C$ (apply \eqref{form55} with $r = \delta$), whereas a Katz-Tao $(\delta,s,C)$-set $P \subset B(1)$ satisfies $|P|_{\delta} \leq C\delta^{-s}$ (apply \eqref{form56} with $r = 1$). 

In fact, every $(\delta,s,C)$-set can be written as a disjoint union of $\lessapprox C|P|\delta^{t}$ Katz-Tao $(\delta,s)$-sets, see \cite[Lemma 3.2]{2022arXiv220513890O}. All statements below could in principle be expressed in terms of the Katz-Tao condition only, but using both definitions leads to a cleaner exposition.  \end{remark}

Theorem \ref{thm3a} is a quantitative version of the case $s + t \geq 2$ of \cite[Theorem 1.6]{fu2022incidence}. The proof is rather different from the argument presented in \cite[Section 5]{fu2022incidence}.

\begin{thm}\label{thm3a} Let $t \in (1,2]$, $s \in (2 - t,1]$, $C \geq 1$. Assume that $\mu$ is Borel measure on $B(1) \subset \R^{2}$ satisfying $\mu(B(p,r)) \leq Cr^{t}$ for all $p \in \R^{2}$ and $r > 0$. Fix $\delta \in 2^{-\N}$, and for every $p \in \mathcal{D}_{\delta}(\spt \mu)$, let $\mathcal{T}_{p} \subset \mathcal{T}^{\delta}$ be a $(\delta,s,C)$-set such that $T \cap p \neq \emptyset$ for all $T \in \mathcal{T}_{p}$. Then,
\begin{equation}\label{form59} \mathcal{H}_{\delta,\infty}^{\sigma + 1}(\mathcal{T}) \gtrsim_{\sigma,s,t} \mu(\R^{2})/C^{3}, \qquad \sigma \in [0,s). \end{equation}
In particular, $\mathcal{T}$ contains a Katz-Tao $(\delta,\sigma + 1)$-set $\mathcal{T}'$ of cardinality $|\mathcal{T}'| \gtrsim_{\sigma,s,t} \mu(\R^{2})\delta^{-(\sigma + 1)}/C^{3}$. \end{thm}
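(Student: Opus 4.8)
The plan is to reduce the ``in particular'' assertion to the content bound \eqref{form59} and then to prove \eqref{form59} itself. For the reduction: once we know that the family of parameter squares $\{Q : T(Q) \in \mathcal{T}\}$ has $\mathcal{H}^{\sigma+1}_{\delta,\infty}$-content $\gtrsim_{\sigma,s,t}\mu(\R^{2})/C^{3}$, a by-now-standard pigeonholing lemma --- extract, scale by scale, the squares carrying most of the content, producing a uniformly $\delta^{-(\sigma+1)}$-dense subfamily, then split that subfamily into Katz--Tao pieces as in the Remark after Definition \ref{def:deltaSSet} and keep the largest --- yields a Katz--Tao $(\delta,\sigma+1)$-set $\mathcal{T}' \subset \mathcal{T}$ with $|\mathcal{T}'| \gtrsim_{\sigma,s,t} \mu(\R^{2})\delta^{-(\sigma+1)}/C^{3}$. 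So it suffices to prove \eqref{form59}, and I would do this by contradiction.

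Suppose $\mathcal{H}^{\sigma+1}_{\delta,\infty}(\mathcal{T})$ is much smaller than $\mu(\R^{2})/C^{3}$, and fix a cover of the parameter squares of $\mathcal{T}$ by dyadic squares $Q_{j}$ of sidelengths $\Delta_{j} \in [\delta,1]$ with $\sum_{j}\Delta_{j}^{\sigma+1}$ correspondingly small; by \eqref{form40} each $Q_{j}$ fattens to an essentially honest $\Delta_{j}$-tube $\mathbf{T}_{j} := T(Q_{j})$ containing the $\delta$-tubes of $\mathcal{T}$ assigned to it. Since Theorem \ref{thm3a} is allowed to lose powers of $C$ (and hence, in the intended application where $C \sim \log(1/\Delta)$, powers of $\log(1/\delta)$), I can afford a dyadic pigeonholing over the $\sim\log(1/\delta)$ possible values of $\Delta_{j}$, reducing to a single intermediate scale $\Delta$: then $\mathcal{T}$ is covered by $N \lesssim \mu(\R^{2})\,\Delta^{-(\sigma+1)}\log^{O(1)}(1/\delta)/C^{3}$ honest $\Delta$-tubes, and the goal is to contradict this bound on $N$.

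The engine should be a two-sided estimate for the $\mu$-weighted incidence count $\mathcal{I} := \sum_{p \in \mathcal{D}_{\delta}(\spt\mu)} \mu(p)\,\#\mathcal{T}_{p}$. From below, each $\mathcal{T}_{p}$ is a non-empty $(\delta,s,C)$-set, so $\#\mathcal{T}_{p} \ge \delta^{-s}/C$, and since the squares $p$ exhaust $\spt\mu$ this gives $\mathcal{I} \ge \delta^{-s}\mu(\R^{2})/C$. From above, I would feed the $\Delta$-tube cover into the incidence inequality of Theorem \ref{thm2a}, applied to $\mu$ (mollified at the appropriate scale) and to a probability measure carried by the covering tubes; the hypothesis $t > 1$ enters both to keep $I_{3-t}(\mu)$ under control and, crucially, through the elementary fact that a single $\Delta$-tube carries only $\lesssim C\Delta^{t-1}$ of the mass $\mu$. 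Carried out naively across the single pair of scales $(\delta,1)$, this comparison is not yet sharp enough: it only yields $\dim\mathcal{T} \gtrsim s+t-1$, which is strictly weaker than the target $\sigma + 1 < s+1$ as soon as $t < 2$.

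Bridging that gap is the Furstenberg-type heart of the argument, and I expect it to require an induction on scales rather than a single application of Theorem \ref{thm2a}. The idea is to run the incidence comparison through an intermediate scale $\Delta$: inside each covering $\Delta$-tube --- or better, each $\Delta$-square it contains, where the rescaling is isotropic --- the restricted data ($\mu$ localised and renormalised, the $\delta$-tubes of $\mathcal{T}$ lying there, the families $\mathcal{T}_{p}$) is again an instance of the same configuration at scale $\delta/\Delta$ with structural constants no worse than $C^{O(1)}$, so the inductive hypothesis applies; combining this ``inner'' bound with an ``outer'' bound at scale $\Delta$ should multiply up to \eqref{form59}. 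I expect the difficulty to sit entirely here. One has to verify that the coarsened data at scale $\Delta$ still satisfies the structural hypotheses --- the $(\delta,s)$-condition does not coarsen for free --- and this is where the non-trivial (and, since we deal with straight lines, genuinely curvature-less) incidence geometry of the tube family has to be exploited. More seriously, the recursion must close so that no factor growing as a power of $\delta^{-1}$ is ever introduced: only powers of $C$, kept down to $C^{3}$, and powers of $\log(1/\delta)$ are permitted. It is precisely the $\delta^{-\epsilon}$-freeness of Theorem \ref{thm2a} that makes such a loss-free scale recursion conceivable, replacing the high--low and induction-on-scales scheme of Fu and Ren.
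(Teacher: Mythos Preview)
Your opening (reduce to the content bound, counter-assumption, dyadic cover) matches the paper, but from there you take a wrong turn. The paper does \emph{not} use an induction on scales here, and your claim that $\log(1/\delta)$-losses are affordable is also incorrect: the conclusion \eqref{form59} has no $\delta$-dependent constant, so the proof may introduce powers of $C$ but not of $\log(1/\delta)$. A single application of Theorem \ref{thm2a} suffices, provided it is made at the right scale.

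The missing idea is this. First, the covering lemma must be the refined Katz--Tao version: at each dyadic scale $\Delta_{j}=2^{-j}\in[\delta,\tfrac12]$ one obtains $\mathbb{T}_{j}\subset\mathcal{T}^{\Delta_{j}}$ which is itself a Katz--Tao $(\Delta_{j},\sigma+1)$-set, with $\sum_{j}\Delta_{j}^{\sigma+1}|\mathbb{T}_{j}|\lesssim\epsilon\,\mu(\R^{2})/C^{3}$ and every $T\in\mathcal{T}$ lying in exactly one $\mathbb{T}_{j}$. Second, pigeonhole with weights $1/j^{2}$ (not uniformly over $\log(1/\delta)$ scales): for each $p$ pick $j(p)$ with $|\mathcal{T}_{p}\cap\mathbb{T}_{j(p)}|\gtrsim|\mathcal{T}_{p}|/j(p)^{2}$, then pick $j$ with $\mu(\{p:j(p)=j\})\gtrsim\mu(\R^{2})/j^{2}$. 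All losses are now powers of $\log(1/\Delta)$ for $\Delta:=2^{-j}$, never of $\log(1/\delta)$. Third --- and this is the point your ``naive single-scale'' calculation misses --- the $(\delta,s,C)$-set condition (as opposed to a mere Katz--Tao condition) says $\mathcal{T}_{p}$ is equidistributed: each $\Delta$-tube contains at most $C\Delta^{s}|\mathcal{T}_{p}|$ elements of $\mathcal{T}_{p}$, so for every relevant $\Delta$-square $\mathbf{p}$ one has $|\{\mathbf{T}\in\mathbb{T}_{j}:\mathbf{T}\cap\mathbf{p}\neq\emptyset\}|\gtrsim\Delta^{-s}/(C\log^{2}(1/\Delta))$. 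Now apply Theorem \ref{thm2a} at scale $\Delta$ to the coarsened $\mu_{\Delta}$ and the counting measure $\nu_{\Delta}$ on $\mathbb{T}_{j}$. The Katz--Tao $(\Delta,\sigma+1)$-property (together with $\sigma+1>3-t$) gives $I_{3-t}(\nu_{\Delta})\lesssim_{\sigma,t}|\mathbb{T}_{j}|\Delta^{-(\sigma+1)}$, and the Frostman bound gives $I_{t}(\mu_{\Delta})\lesssim C\mu(\R^{2})\log(1/\Delta)$. Comparing the resulting upper bound with the incidence lower bound coming from $|\mathbb{T}_{\mathbf{p}}|\gtrsim\Delta^{-s}$, and inserting $|\mathbb{T}_{j}|\lesssim\epsilon\,\mu(\R^{2})\Delta^{-(\sigma+1)}/C^{3}$, yields $\epsilon\gtrsim_{\sigma,t}\Delta^{2(\sigma-s)}/\log^{10}(1/\Delta)$. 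Since $\sigma<s$, the right side has a positive infimum over $\Delta\in(0,\tfrac12]$; taking $\epsilon$ below it gives the contradiction. No recursion is required, and your diagnosis that one only reaches exponent $s+t-1$ came from running the incidence comparison at scale $\delta$ instead of $\Delta$.
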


\begin{remark} The "In particular..." part follows immediately from the Hausdorff content lower bound combined with \cite[Proposition A.1]{FaO}. \end{remark}

\begin{proof}[Proof of Theorem \ref{thm3a}] One may assume that $t \in (1,2)$, because if $t = 2$, the measure $\mu$ also satisfies $\mu(B(p,r)) \leq Cr^{t'}$ for some $t' \in (1,2)$ with $s \in (2 - t',1]$.

Fix $\sigma \in (2 - t,s)$ (the cases $\sigma \in [0,2 - t]$ of \eqref{form59} follow by monotonicity if one manages to treat $\sigma \in (2 - t,s)$.)  Assume to the contrary that
\begin{equation}\label{form13a} \mathcal{H}_{\delta,\infty}^{\sigma + 1}(\mathcal{T}) \leq \epsilon \cdot \mu(\R^{2})/C^{3}, \end{equation}
where
\begin{equation}\label{form12a} \epsilon := \epsilon(\sigma,s,t) := \inf_{0 < \Delta \leq 1/2} \frac{c\Delta^{2(\sigma - s)}}{\log^{10}(1/\Delta)} > 0, \end{equation}
and $c = c(\sigma,t) > 0$ is a constant to be determined later.

The counter assumption \eqref{form13a} can be used to effectively cover $\mathcal{T}$ by Katz-Tao $(\sigma + 1,\Delta_{j})$-sets at various scales $\Delta_{j} \in 2^{-\N} \cap [\delta,\tfrac{1}{2}]$. The existence of such coverings was originally observed by Katz and Tao \cite[Lemma 7.5(a)]{KT01}, but refined versions have later appeared in \cite[Lemma 2]{2022arXiv220713844G} and \cite[Lemma 11]{2022arXiv220803597B}. I literally use the version from \cite{2022arXiv220803597B} (whose proof is a striking $3$ lines long). Namely, for every $\Delta_{j} = 2^{-j} \in 2^{-\N} \cap [\delta,\tfrac{1}{2}]$, one may find a (possibly empty) Katz-Tao $(\sigma + 1,\Delta_{j})$-set $\mathbb{T}_{j} \subset \mathcal{T}^{\Delta_{j}}$ such that:
\begin{itemize}
\item[(a) \phantomsection \label{a}] $\sum_{j = 1}^{\log(1/\delta)} \Delta_{j}^{\sigma + 1}|\mathbb{T}_{j}| \lesssim \mathcal{H}^{\sigma + 1}_{\delta,\infty}(\mathcal{T}) \leq \epsilon \cdot \mu(\R^{2})/C^{3}$.
\item[(b) \phantomsection \label{b}] every tube $T \in \mathcal{T}$ is contained in exactly one tube from some collection $\mathbb{T}_{j}$.  
\end{itemize}

We then use the pigeonhole principle to find a particularly useful index "$j$". Start by noting that for every $p \in \mathcal{P}$ fixed, property \nref{b} implies
\begin{displaymath} \mathcal{T}_{p} = \bigcup_{j = 1}^{\log(1/\delta)} (\mathcal{T}_{p} \cap \mathbb{T}_{j}). \end{displaymath}
(Here $\mathcal{T}_{p} \cap \mathbb{T}_{j}$ consists of those tubes in $\mathcal{T}_{p}$ whose dyadic $\Delta_{j}$-ancestor lies in $\mathbb{T}_{j}$.) Therefore, one may find an index $j = j(p) \in \{1,\ldots,\log(1/\delta)\}$ such that 
\begin{equation}\label{form10a} |\mathcal{T}_{p} \cap \mathbb{T}_{j}| \gtrsim |\mathcal{T}_{p}|/j^{2}. \end{equation}
Write $\mathcal{P}_{j} := \{p \in \mathcal{P} : j(p) = j\}$, for $j \in \{1,\ldots,\log(1/\delta)\}$. Then $\mathcal{P}$ is contained in the union of the sets $\mathcal{P}_{j}$, so one may find and fix an index $j \in \{1,\ldots,\log(1/\delta)\}$ such that $\mu(\cup \mathcal{P}_{j}) \gtrsim \mu(\R^{2})/j^{2}$. We write $\Delta := 2^{-j}$ for the index "$j$" found above.

Write $\mathcal{P}_{\Delta} := \mathcal{D}_{\Delta}(\mathcal{P}_{j})$, and let $\mu_{\Delta}$ be the absolutely continuous measure on $\R^{2}$ which has constant density of $\Delta$-squares, and is determined by
\begin{displaymath} \mu_{\Delta}(\mathbf{p}) := \mu(\mathbf{p}), \qquad p \in \mathcal{P}_{\Delta}. \end{displaymath}
Note that 
\begin{equation}\label{form58} \mu_{\Delta}(\cup \mathcal{P}_{\Delta}) \geq \mu(\cup \mathcal{P}_{j}) \gtrsim \mu(\R^{2})/\log^{2}(1/\Delta). \end{equation}
Furthermore, it follows easily from the hypothesis $\mu(B(x,r)) \leq Cr^{t}$ that 
\begin{equation}\label{form11a} I_{t}(\mu_{\Delta}) \lesssim_{t} C\mu_{\Delta}(\R^{2})\log(1/\Delta) \leq C\mu(\R^{2})\log(1/\Delta). \end{equation}

Abbreviate $\mathbb{T} := \mathbb{T}_{j}$. For $\mathbf{p} \in \mathcal{P}_{\Delta}$, define
\begin{displaymath} \mathbb{T}_{\mathbf{p}} := \{\mathbf{T} \in \mathbb{T} : \mathbf{T} \cap \mathbf{p} \neq \emptyset\}. \end{displaymath}
Let us check that
\begin{equation}\label{form9a} |\mathbb{T}_{\mathbf{p}}| \gtrsim \frac{\Delta^{-s}}{C\log^{2}(1/\Delta)},  \qquad \mathbf{p} \in \mathcal{P}_{\Delta}. \end{equation}
If $\mathbf{p} \in \mathcal{P}_{\Delta}$, one may pick $p \in \mathcal{P}_{j}$ with $p \subset \mathbf{p}$. First observe that $\mathbb{T}_{\mathbf{p}}$ contains all the dyadic $\Delta$-ancestors of $\mathcal{T}_{p} \cap \mathbb{T}$. Namely, if $T \in \mathcal{T}_{p} \cap \mathbb{T}$, then (a) the dyadic $\Delta$-ancestor $\mathbf{T}$ of $T$ is contained in $\mathbb{T}$ by definition of $\mathcal{T}_{p} \cap \mathbb{T}$, and (b) $T \cap p \neq \emptyset$ by definition of $\mathcal{T}_{p}$. Therefore also $\mathbf{T} \cap \mathbf{p} \supset T \cap p \neq \emptyset$, hence $\mathbf{T} \in \mathbb{T}_{\mathbf{p}}$.

Next, by the $(\delta,s,C)$-set property of $\mathcal{T}_{p}$, every fixed ancestor $\mathbf{T} \in \mathbb{T}_{\mathbf{p}}$ can contain $\leq C\Delta^{s}|\mathcal{T}_{p}|$ elements of $\mathcal{T}_{p} \cap \mathbb{T}$. Combining this observation with \eqref{form10a} leads to the inequality
\begin{displaymath} |\mathcal{T}_{p}|/\log^{2}(1/\Delta) \lesssim |\mathcal{T}_{p} \cap \mathbb{T}| \leq |\mathbb{T}_{\mathbf{p}}| \cdot (C\Delta^{s}|\mathcal{T}_{p}|), \end{displaymath}
which is equivalent to \eqref{form9a}.

The plan is now to apply Theorem \ref{thm2a} to the pair of measures $\mu_{\Delta}$ and $\nu_{\Delta}$, where $\nu_{\Delta}$ is a uniformly distributed measure on $\mathbb{T}$ giving unit mass to each $\mathbf{T} \in \mathbb{T}$. To make this precise, $\nu_{\Delta}$ actually needs to defined on the parameter set $[0,1] \times \R$ by
\begin{equation}\label{form61} \nu_{\Delta} := \sum_{\mathbf{q} \in \mathcal{Q}} \Delta^{-2} \mathbf{1}_{\mathbf{q}}, \end{equation}
where $\mathcal{Q} = \{\mathbf{q} \in \mathcal{D}_{\Delta}([0,1] \times \R) : T(\mathbf{q}) \in \mathbb{T}\}$. To find a lower bound for the weighted incidences between $\mu_{\Delta}$ and $\nu_{\Delta}$, the idea is simply that for each $\mathbf{p} \in \mathcal{P}_{\Delta} = \spt \mu_{\Delta}$, the family $\mathbb{T}_{\mathbf{p}} \subset \mathbb{T}$ has many elements thanks to \eqref{form9a}. To actually filter this idea through the formal definition of weighted incidences is slightly cumbersome. To do this carefully, let $\mathcal{Q}_{\mathbf{p}} := \{\mathbf{q} : T(\mathbf{q}) \in \mathbb{T}_{\mathbf{p}}\} \subset \mathcal{Q}$ be the parameter set of $\mathbb{T}_{\mathbf{p}}$, for $\mathbf{p} \in \mathcal{P}_{\Delta}$. The following is the key observation:
\begin{equation}\label{form57} \bigcup_{\mathbf{q} \in \mathcal{Q}_{\mathbf{p}}} \mathbf{q} \subset \{q : p \in T_{q}(20\Delta)\}, \qquad p \in \mathbf{p} \in \mathcal{P}_{\Delta}, \end{equation}
where $T_{q}(20\Delta)$ is the ordinary $20\Delta$-tube from Definition \ref{def:tube}. To see this, fix $p \in \mathbf{p} \in \mathcal{P}_{\Delta}$ and $\mathbf{q} \in \mathcal{Q}_{\mathbf{p}}$ and $q \in \mathbf{q}$. Then $T(\mathbf{q}) \in \mathbb{T}_{\mathbf{p}}$, so $T(\mathbf{q}) \cap \mathbf{p} \neq \emptyset$ by the definition of $\mathbb{T}_{\mathbf{p}}$. Since $\mathbf{p} \subset B(2)$, it follows from \eqref{form40} that $\dist(\mathbf{p},\ell_{q}) \leq 10\Delta$, and therefore $p \in \mathbf{p} \subset T_{q}(20\Delta)$. This is what was claimed.

With the inclusion above secured, the weighted incidences between $\mu_{\Delta}$ and $\nu_{\Delta}$ can be estimated easily:
\begin{align*} \mathcal{I}_{20\Delta}(\mu_{\Delta},\nu_{\Delta}) & \stackrel{\mathrm{def.}}{=} \int \nu_{\Delta}(\{q : p \in T_{q}(20\Delta)\}) \, d\mu_{\Delta}(p)\\
& = \sum_{\mathbf{p} \in \mathcal{P}_{\Delta}} \int_{\mathbf{p}} \nu_{\Delta}(\{q : p \in T_{q}(20\Delta)\}) \, d\mu_{\Delta}(p)\\
& \geq \sum_{\mathbf{p} \in \mathcal{P}_{\Delta}} \mu_{\Delta}(\mathbf{p})\nu\Big(\bigcup_{\mathbf{q} \in \mathcal{Q}_{\mathbf{p}}} \mathbf{q} \Big) = \sum_{\mathbf{p} \in \mathcal{P}_{\Delta}} \mu_{\Delta}(\mathbf{p})|\mathbb{T}_{\mathbf{p}}| \gtrsim \frac{\mu(\R^{2})\Delta^{-s}}{C\log^{4}(1/\Delta)}, \end{align*}
where the final inequality combined \eqref{form58} and \eqref{form9a}.

The plan is now to compare this lower bound against the upper bound provided by Theorem \ref{thm2a}. The $(3 - t)$-energy of $\nu_{\Delta}$ can be controlled by the Katz-Tao $(\Delta,\sigma + 1)$-set property of $\mathbb{T}$ (which is by definition the same as the Katz-Tao $(\Delta,\sigma + 1)$-set property of $\mathcal{Q}$), and recalling from the beginning of the proof that $\sigma + 1 > 3 - t$:
\begin{displaymath} I_{3 - t}(\nu_{\Delta}) \lesssim_{\sigma,t} |\mathbb{T}|\Delta^{-(\sigma + 1)} \stackrel{\textup{\nref{a}}}{\lesssim} (\epsilon \cdot \mu(\R^{2})/C^{3}) \cdot \Delta^{-2(\sigma + 1)}. \end{displaymath}
Recalling also \eqref{form11a}, and applying Theorem \ref{thm2a} at scale $20\Delta$ yields
\begin{align*} \frac{\mu(\R^{2})\Delta^{-s}}{C\log^{4}(1/\Delta)} & \lesssim_{\sigma,t} \Delta \sqrt{\epsilon \cdot C^{-2}\mu(\R^{2})^{2} \Delta^{-2(\sigma + 1)} \log(1/\Delta)}\\
& \leq \sqrt{\epsilon} \cdot \frac{\mu(\R^{2})}{C} \cdot \Delta^{-\sigma}\log(1/\Delta). \end{align*}
Rearranging this inequality leads to $\epsilon \gtrsim_{\sigma,t} \Delta^{2(\sigma - s)}\log^{-10}(1/\Delta)$, which contradicts the choice at \eqref{form12a} is $c = c(\sigma,t) > 0$ is small enough. This completes the proof. 

I omitted the minor detail that in order to apply Theorem \ref{thm2}, the measure $\nu_{\Delta}$ should actually be supported on $[\tfrac{1}{2},\tfrac{3}{4}] \times \R$. This can be arranged by initially reducing the proof of the current theorem to the case where angles of the tubes $\mathcal{T}_{p}$ lie in the interval $[\tfrac{1}{2},\tfrac{3}{4}]$.  \end{proof}

\section{A quantitative radial projection estimate}

The following result is \cite[Proposition 4.8]{2022arXiv220513890O}. This preprint was not published in original form, but the higher dimensional counterpart is \cite[Proposition 23]{2022arXiv220803597B}. 

 
 
  \begin{proposition}\label{prop2} For every $s \in (0,1]$, $t > 1$, and $\sigma \in (0,s)$, there exists $\delta_{0} = \delta_{0}(s,\sigma,t) > 0$ and $\epsilon = \epsilon(s,\sigma,t) > 0$ such that the following holds for all $\delta \in (0,\delta_{0}]$. Let $E,F \subset B(1) \subset \R^{2}$ be non-empty $\delta$-separated sets, where $E$ is a $(\delta,s,\delta^{-\epsilon})$-set, and $F$ is a $(\delta,t,\delta^{-\epsilon})$-set (recall Definition \ref{def:deltaSSet}), and $\dist(E,F) \geq \tfrac{1}{2}$. Then, there exists $q \in F$ such that
 \begin{equation}\label{form37} |\pi_{q}(E')|_{\delta} \geq \delta^{-\sigma}, \qquad E' \subset E, \, |E'| \geq \delta^{\epsilon}|E|. \end{equation}
 \end{proposition}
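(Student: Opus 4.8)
Since Proposition \ref{prop2} is essentially the $\delta$-discretised form of the sharp radial projection theorem of Orponen--Shmerkin--Wang, the plan is to argue by contradiction and to spend the effort turning the failure of \eqref{form37} into a configuration governed by a Furstenberg-type incidence bound. So suppose that, along a sequence $\delta \to 0$, for \emph{every} $q \in F$ there is a ``bad'' subset $E_{q} \subset E$ with $|E_{q}| \geq \delta^{\epsilon}|E|$ and $|\pi_{q}(E_{q})|_{\delta} < \delta^{-\sigma}$; equivalently $E_{q} \subset \bigcup \mathcal{T}_{q}$ for a family $\mathcal{T}_{q}$ of fewer than $\delta^{-\sigma}$ dyadic $\delta$-tubes through $q$ whose union covers $E_{q}$.

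First I would run the standard dyadic pigeonholing: refine $E_{q}$ so that every tube of $\mathcal{T}_{q}$ still meeting it carries $\sim m$ points of $E$ with $m |\mathcal{T}_{q}| \sim |E_{q}|$, and then, losing only a $\log(1/\delta)$-factor, fix the cardinality $|\mathcal{T}_{q}| \equiv M$ over a subset $F' \subset F$ of almost full $\nu$-measure. Since $|E| \gtrsim \delta^{\epsilon - s}$, $M < \delta^{-\sigma}$ and $\sigma < s$, this forces $m \gtrsim \delta^{2\epsilon - s}/M \to \infty$, so the tubes are genuinely rich in $E$.

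The heart of the matter is the ensuing clash. After one further refinement each $\mathcal{T}_{q}$ may be taken to be a $(\delta,\sigma',\delta^{-\epsilon})$-set of tubes with $\sigma'$ close to $\sigma$, so the family $\mathcal{T}' := \bigcup_{q \in F'} \{T : q \in T\}$ is a dual $(\sigma', t)$-Furstenberg set built over the $t$-dimensional set $F'$. As $t > 1$, a Furstenberg estimate (Theorem \ref{thm3a} in the critical range $\sigma' + t \geq 2$, the sub-critical Fu--Ren bound otherwise) forces $\mathcal{T}'$ to contain a Katz--Tao $(\delta,\cdot)$-set of dimension essentially $3 - t$ or larger, up to $\delta^{o(1)}$ losses. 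With $F'$ playing the $t$-dimensional role and $\mathcal{T}'$ the complementary $(3-t)$-dimensional role, point--line duality lets one insert the pair into the $X$-ray incidence inequality (Theorem \ref{thm2a} with parameter $t$), yielding $\mathcal{I}_{\delta}(F',\mathcal{T}') \lesssim \delta^{1 - o(1)} |F'| |\mathcal{T}'|$. Comparing this against the trivial lower bound $\mathcal{I}_{\delta}(F',\mathcal{T}') \gtrsim M |F'|$, against $|\mathcal{T}'| \approx \delta^{-(3-t)+o(1)}$, and against the richness $m \gtrsim \delta^{\epsilon}|E|/M$, and optimising over the intermediate scale at which the combinatorics was extracted, one should reach an inequality that fails once $\epsilon = \epsilon(s,t,\sigma)$ is small enough.

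The main obstacle lies in that last step: the data surviving the pigeonholing is only ``$(\delta,\cdot,\delta^{-\epsilon})$-set'' with a polynomial defect, whereas both the Furstenberg bound and Theorem \ref{thm2a} want genuine Katz--Tao regularity, so one must pass to a carefully chosen intermediate scale $\Delta = \delta^{o(1)}$ and upgrade $\mathcal{T}_{q}$ and $\mathcal{T}'$ there, exactly in the style of the proof of Theorem \ref{thm3a}; keeping every loss sub-polynomial in $\delta$ is what makes the estimate delicate. A logically separate difficulty is the near-critical regime $\dim F \searrow 1$, where $\sigma' + t < 2$ and the clean $(\sigma'+1)$-Furstenberg bound is unavailable: there one either invokes the sharp sub-critical Furstenberg estimate or bootstraps, starting from Kaufman's elementary radial bound and iterating an $\epsilon$-gain in the projection exponent up to $\sigma$. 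Given all this, for the present paper it is cleanest to cite \cite{2022arXiv220513890O, 2022arXiv220803597B}.
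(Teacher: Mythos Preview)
The paper does not prove Proposition~\ref{prop2} at all: it is quoted verbatim from \cite[Proposition 4.8]{2022arXiv220513890O} (with \cite[Proposition 23]{2022arXiv220803597B} as the published higher-dimensional form). Your closing sentence reaches the same conclusion, so on the level of what the paper actually does, you agree.

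The self-contained sketch you offer before that, however, does not match the argument in the cited references and has real gaps. First, the assertion that ``after one further refinement each $\mathcal{T}_{q}$ may be taken to be a $(\delta,\sigma',\delta^{-\epsilon})$-set'' is unjustified: the counter-assumption only gives $|\mathcal{T}_{q}| < \delta^{-\sigma}$, and the fact that $\cup\mathcal{T}_{q}$ covers a large piece of the $(\delta,s)$-set $E$ does not by itself impose angular non-concentration on $\mathcal{T}_{q}$. Without that, Theorem~\ref{thm3a} is not applicable. Second, the final incidence clash is not set up correctly. Your lower bound $\mathcal{I}_{\delta}(F',\mathcal{T}') \gtrsim M|F'|$ relies on each $q \in F'$ lying in $\approx M$ tubes of $\mathcal{T}'$, but $\mathcal{T}'$ is only a Katz--Tao \emph{subset} of $\bigcup_{q}\mathcal{T}_{q}$, so that incidence lower bound is lost in the extraction. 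Moreover, the richness parameter $m$ (points of $E$ per tube) never enters the count $\mathcal{I}_{\delta}(F',\mathcal{T}')$, since $E$ plays no role in that pairing; the tension you want between $m$, $M$ and the upper bound does not materialise as written. The proofs in \cite{2022arXiv220513890O,2022arXiv220803597B} instead run an $\epsilon$-improvement bootstrap (which you allude to in passing), and that is genuinely different machinery from Theorems~\ref{thm3a} and \ref{thm2a} of the present paper.
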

 
 
 
The proof of Theorem \ref{thm3} will require the following corollary:

 \begin{cor}\label{cor2} For every $s \in (0,1]$, $\sigma \in (0,s)$, $t \in (1,2]$, and $C > 0$, there exists a constant $K = K(C,s,\sigma,t) \geq 1$ such that the following holds.
 
 Let $\mu,\nu$ be Borel probability measures supported on $B(1)$, with $\dist(\spt \mu,\spt \nu) \geq \tfrac{1}{2}$. Assume 
 \begin{equation}\label{form35} \mu(B(x,r)) \leq Cr^{t} \quad \text{and} \quad \nu(B(x,r)) \leq Cr^{s} \end{equation}
 for all $x \in \R^{2}$ and $r > 0$. Then, there exists $q \in \spt \mu$ such that 
 \begin{displaymath} \mathcal{H}^{\sigma}_{\infty}(\pi_{q}(E')) \geq \tfrac{1}{K}(\nu(E'))^{K} \end{displaymath}
 for all Borel sets $E' \subset \spt \nu$.   \end{cor}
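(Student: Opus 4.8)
The plan is to deduce Corollary \ref{cor2} from Proposition \ref{prop2} by a standard dyadic-pigeonholing/exhaustion scheme that converts the ``most scales'' statement about $(\delta,s,\delta^{-\epsilon})$-sets into a statement about Frostman measures, and then to promote the single-scale estimate \eqref{form37} into a Hausdorff-content lower bound. First I would fix $s,\sigma,t,C$ and choose an auxiliary exponent $\sigma' \in (\sigma,s)$, apply Proposition \ref{prop2} with data $(s,\sigma',t)$ to obtain $\delta_0,\epsilon>0$, and set things up so that $K$ depends only on $C,s,\sigma,t$ through $\epsilon$ and $\sigma/\sigma'$. Given the Frostman measures $\mu,\nu$ from \eqref{form35}, I would fix a small dyadic scale $\delta\leq\delta_0$ (to be sent to $0$ at the end, or rather: I would argue at a single but arbitrarily small scale and then pass to Hausdorff content by compactness) and discretise: let $F=\mathcal{D}_\delta(\spt\mu)$, which is a $(\delta,t,C')$-set for an absolute multiple $C'$ of $C$ by the upper Frostman bound, hence in particular a $(\delta,t,\delta^{-\epsilon})$-set once $\delta$ is small enough.

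The next step handles an arbitrary Borel set $E'\subset\spt\nu$ with $\nu(E')=:\eta$. I would pigeonhole the ``good part'' of $E'$ at scale $\delta$: using the Frostman bound $\nu(B(x,r))\leq Cr^s$, a standard argument (cf. the reduction of Frostman measures to $(\delta,s)$-sets, e.g. \cite[Lemma 3.2]{2022arXiv220513890O}) shows that $\mathcal{D}_\delta(E')$ contains a $(\delta,s,\delta^{-\epsilon})$-subset $E$ with $|E|_\delta\gtrsim \eta\,\delta^{\epsilon}\,\delta^{-s}$, possibly after throwing away a portion of $E'$ of $\nu$-measure $\leq\eta/2$; the point is that the $\delta^{-\epsilon}$-room absorbs the logarithmic losses from pigeonholing over $\sim\log(1/\delta)$ scales. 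Now apply Proposition \ref{prop2} to the pair $(E,F)$ (which requires $\dist(E,F)\geq\tfrac12$, inherited from $\dist(\spt\mu,\spt\nu)\geq\tfrac12$): there is $q\in F\subset\spt\mu$ such that $|\pi_q(E'')|_\delta\geq\delta^{-\sigma'}$ for every $E''\subset E$ with $|E''|\geq\delta^\epsilon|E|$.

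To convert this covering-number bound into a content bound I would argue that $|\pi_q(E)|_\delta\geq\delta^{-\sigma'}$ \emph{robustly}: for any cover of $\pi_q(E)$ by arcs $J$ with $\operatorname{diam}(J)\geq\delta$, the arcs of diameter comparable to a fixed dyadic $\rho\in[\delta,1]$ contribute, by applying the previous paragraph at scale $\rho$ in place of $\delta$ (Proposition \ref{prop2} is scale-invariant), a nontrivial amount; carrying out the standard ``content from robust covering number'' argument (summing a geometric-type series over the $\log(1/\delta)$ dyadic scales, where again the $\delta^{-\epsilon}$-slack beats the logarithm) yields $\mathcal{H}^\sigma_\infty(\pi_q(E'))\gtrsim \delta^{C_1\epsilon}$ for an absolute $C_1$ — but this bound degenerates as $\delta\to0$, so I would instead run the whole scheme at the single scale $\delta\sim\eta^{1/\epsilon}$, or more cleanly track the dependence to get $\mathcal{H}^\sigma_\infty(\pi_q(E'))\geq \tfrac1K\eta^{K}$ directly with $K$ a large constant depending on $1/\epsilon$ and $\sigma'/\sigma$. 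Finally I would remove the dependence of $q$ on $E'$ by a limiting/compactness argument: the function $E'\mapsto q(E')$ ranges over the finite set $\mathcal{D}_\delta(\spt\mu)$, so for a countable dense family of sets one finds a common $q$, and then a monotonicity and approximation argument (every Borel $E'$ is inner-approximated by the dense family, and $\mathcal{H}^\sigma_\infty\circ\pi_q$ behaves well under such approximation) gives a single $q\in\spt\mu$ working for all $E'$ simultaneously.

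The main obstacle I anticipate is precisely this last uniformisation in $E'$: Proposition \ref{prop2} produces a point $q$ depending on the set, so obtaining a single $q$ valid for \emph{all} Borel $E'\subset\spt\nu$ — with a clean power-law bound $\tfrac1K\eta^K$ rather than something that collapses to $0$ — requires care in how the limit over scales and the union over sets interact. A secondary technical nuisance is bookkeeping the constant $K$: one must check that the many pigeonholing losses (factors of $\log(1/\delta)$, the passage $\eta\to\eta\delta^\epsilon$, the robustification over scales) combine into a genuine polynomial dependence $\eta\mapsto\eta^{K}$, which is where choosing $\sigma'$ strictly between $\sigma$ and $s$ buys the necessary headroom.
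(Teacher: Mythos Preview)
Your overall plan --- reduce to Proposition \ref{prop2} by discretising $\mu,\nu$ --- is the right one, and the paper does exactly that. But the step you correctly flag as ``the main obstacle'' is a genuine gap, and your proposed fix does not work. The problem recurs at two levels. First, in converting $|\pi_q(E)|_\delta\geq\delta^{-\sigma'}$ to a content lower bound you want to reapply Proposition \ref{prop2} at each dyadic scale $\rho\in[\delta,1]$; but each such application may produce a \emph{different} $q\in F$, and the proposition only asserts existence, so you cannot keep the same $q$ across scales. Second, your density/compactness argument for uniformising over $E'$ is not viable: Borel subsets of $\spt\nu$ admit no useful ``countable dense family'' under which $\mathcal H^\sigma_\infty(\pi_q(\cdot))$ and $\nu(\cdot)$ are simultaneously well-behaved by monotone approximation, and since you tie $\delta$ to $\eta=\nu(E')$ via $\delta\sim\eta^{1/\epsilon}$, the candidate points $q$ do not even live in a single finite set $\mathcal D_\delta(\spt\mu)$.

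The paper bypasses all of this by arguing \emph{by contradiction}: assume that for \emph{every} $q\in\spt\mu$ there is a bad set $E_q$ with $\mathcal H^\sigma_\infty(\pi_q(E_q))<\tfrac{1}{K}(\nu(E_q))^K$. The counter-assumption itself hands you, for each $q$, an efficient cover of $E_q$ by $\Delta_j$-tubes through $q$ with $\sum_j\Delta_j^\sigma|\mathcal T_q^j|\leq\tfrac1K$; pigeonholing over $j$ (with weights $j^{-2}$) and then over $q$ picks out a \emph{single} scale $\Delta\leq K^{-1}\leq\delta_0$ which is good for a $\mu$-heavy set of $q$'s, and the inequality $\Delta^\sigma\leq\Delta^\sigma|\mathcal T_q|\leq(\nu(E_q))^K$ forces $\nu(E_q)\geq\Delta^{1/K}\geq\Delta^{\epsilon/10}$ uniformly. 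After that, routine discretisation at scale $\Delta$ (using the Frostman bounds and a Cauchy--Schwarz pass to find a common $(\Delta,s,\Delta^{-\epsilon})$-set $\mathcal E$) puts you exactly in the setting of Proposition \ref{prop2}, and a \emph{single} application at scale $\Delta$ gives the contradiction. The point is that the counter-assumption manufactures the common scale and removes the need to uniformise in $E'$: you never find a $q$ working for all $E'$, you only show that no $q$ can fail.
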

 
 \begin{remark}\label{rem1} In fact, the proof shows that "$q \in \spt \mu$" can be replaced by "$q \in F$" where $F \subset \spt \mu$ is any Borel set with $\mu(F) = 1$.  \end{remark}
 
 \begin{proof}[Proof of Corollary \ref{cor2}] The plan is to apply Proposition \ref{prop2} with parameters $s,t$ and $\sigma$, so let $\epsilon > 0$ and $\delta_{0} > 0$ be the constants provided by the proposition with these parameters. The constant $K \geq 1$ will need to be so large that the following requirements are met:
\begin{itemize}
\item $K^{-1} \leq \min\{\delta_{0},\epsilon/10\}$,
\item $AC\log^{A}(1/\Delta) \leq \Delta^{-\epsilon/4}$ for all $\Delta \leq K^{-1}$, for a suitable absolute constant $A \geq 1$,
\end{itemize} 
 
 The proof starts with a counter assumption: for every $q \in F := \spt \mu$ there exists a Borel set $E_{q} \subset E := \spt \nu$ such that
 \begin{displaymath} \mathcal{H}^{\sigma}_{\infty}(\pi_{q}(E_{q})) < \tfrac{1}{K}(\nu(E_{q}))^{K}, \end{displaymath}
in particular $\nu(E_{q}) > 0$. Write $2^{-j} =: \Delta_{j}$ for $j \geq 1$. Using the definition of Hausdorff content, find families of $\Delta_{j}$-tubes $\mathcal{T}^{j}_{q}$ containing $q$ such that
\begin{equation}\label{form32} E_{q} \subset \bigcup_{j \geq 1} \cup \mathcal{T}^{j}_{q} \quad \text{and} \quad \sum_{j \geq 1} \Delta_{j}^{\sigma}|\mathcal{T}^{j}_{q}| \leq \tfrac{1}{K}(\nu(E_{q}))^{K} \leq \tfrac{1}{K}. \end{equation}
(In this proof it is not needed that the families $\mathcal{T}_{q}^{j}$ are Katz-Tao $(\Delta_{j},\sigma)$-sets.) By the pigeonhole principle, one may for each $q \in F$ pick an index $j = j(q) \geq 1$ such that
\begin{equation}\label{form33} \nu(E_{q} \cap (\cup \mathcal{T}_{q}^{j})) \gtrsim \frac{\nu(E_{q})}{j^{2}} > 0. \end{equation}
Note that, by the final inequality in \eqref{form32}, and since $\sigma \leq 1$, one may deduce that $\mathcal{T}_{q}^{j} = \emptyset$ for all $j \in \{1,\ldots, \log K\}$. Therefore $j(q) \geq \log K$, or in other words $\Delta_{j(q)} \leq K^{-1} \leq \delta_{0}$.

By a second application of the pigeonhole principle, find an index $j \geq 1$ such that the set $F_{j} := \{q \in F : j(q) = j\}$ satisfies $\mu(F_{j}) \gtrsim 1/j^{2}$. Fix this index $j \geq 1$ for the rest of the proof, and abbreviate 
\begin{displaymath} \Delta := \Delta_{j} \leq \delta_{0}, \quad \mathcal{T}_{q} := \mathcal{T}_{q}^{j} \quad \text{and} \quad \bar{F} := F_{j}. \end{displaymath}

From \eqref{form33}, one sees that $\mathcal{T}_{q} \neq \emptyset$ for all $q \in \bar{F}$, therefore $|\mathcal{T}_{q}| \geq 1$. One may combine this information with \eqref{form32} to find
\begin{displaymath} \Delta^{\sigma} \leq \Delta^{\sigma} |\mathcal{T}_{q}| \leq (\nu(E_{q}))^{K} \quad \Longrightarrow \quad \nu(E_{q}) \geq \Delta^{\sigma/K} \geq \Delta^{1/K} \geq \Delta^{\epsilon/10}, \end{displaymath}
and consequently, by the choice of $K$,
\begin{equation}\label{form60} \nu(E_{q} \cap (\cup \mathcal{T}_{q})) \stackrel{\eqref{form33}}{\gtrsim} \frac{\nu(E_{q})}{\log^{2}(1/\Delta)} \geq \frac{\Delta^{1/K}}{\log^{2}(1/\Delta)} \geq \Delta^{\epsilon/8}.\end{equation}
Consider the family $\mathcal{F}_{\Delta} := \mathcal{D}_{\Delta}(\bar{F})$. Evidently $\mu(\cup \mathcal{F}_{\Delta}) \geq \mu(\bar{F}) \gtrsim \log^{-2}(1/\Delta)$. For each $\mathbf{q} \in \mathcal{F}_{\Delta}$, fix one distinguished point $q \in \bar{F} \cap \mathbf{q}$, and let
\begin{displaymath} \mathcal{E}_{\mathbf{q}} := \cup \{\mathbf{p} \in \mathcal{D}_{\Delta}(E) : \mathbf{p} \cap (\cup \mathcal{T}_{q}) \neq \emptyset\} \supset E_{q} \cap (\cup \mathcal{T}_{q}). \end{displaymath} 
Recalling that $\mathcal{T}_{q}$ is a collection of $\Delta$-tubes containing $q$, hence intersecting $\mathbf{q}$, one has
\begin{equation}\label{form34} |\pi_{\mathbf{q}}(\mathcal{E}_{\mathbf{q}})|_{\Delta} \lesssim |\mathcal{T}_{q}| \stackrel{\eqref{form32}}{\leq} \tfrac{1}{K}\Delta^{-\sigma} < \Delta^{-\sigma}. \end{equation}
(This also uses $\dist(E,F) \geq \tfrac{1}{2}$.) Furthermore, $\nu(\mathcal{E}_{\mathbf{q}}) \gtrsim \Delta^{\epsilon/8}$ for all $\mathbf{q} \in \mathcal{F}_{\Delta}$, by \eqref{form60}.

To summarise, we have now located a set $\mathcal{F}_{\Delta} \subset \mathcal{D}_{\Delta}$ with $\mu(\cup \mathcal{F}_{\Delta}) \gtrsim \log^{-2}(1/\Delta)$ such that for every $\mathbf{q} \in \mathcal{F}_{\Delta}$, there exists a set $\mathcal{E}_{\mathbf{q}} \subset \mathcal{D}_{\Delta}(E)$ with $\nu(\cup \mathcal{E}_{\mathbf{q}}) \gtrsim \Delta^{\epsilon/8}$ and small $\Delta$-scale radial projection to $\mathbf{q}$ in the sense \eqref{form34}. This nearly contradicts Proposition \ref{prop2} applied at scale $\Delta$, except that the measures $\nu$ and $\mu$ need to replaced by a $(\Delta,s,\Delta^{-\epsilon})$-set and a $(\Delta,t,\Delta^{-\epsilon})$-set, respectively. This is routine pigeonholing, but I sketch the details.

Start by pigeonholing a subset $\mathcal{F}_{\Delta}' \subset \mathcal{F}_{\Delta}$ such that 
\begin{itemize}
\item $\mu(\mathbf{q})/\mu(\mathbf{q}') \in [\tfrac{1}{2},2]$ for $\mathbf{q},\mathbf{q}' \in \mathcal{F}_{\Delta}'$, and
\item $\mu(\cup \mathcal{F}_{\Delta}') \gtrsim \log^{-3}(1/\Delta)$.
\end{itemize}
It now follows from the Frostman condition \eqref{form35} on $\mu$ that $\mathcal{F}_{\Delta}'$ is a $(\Delta,t,O(C\log^{3}(1/\Delta)))$-set. In particular $\mathcal{F}_{\Delta}'$ is a $(\Delta,t,\Delta^{-\epsilon/2})$-set by the choice of $K$.

Next, for each $\mathbf{q} \in \mathcal{F}_{\Delta}'$, find by a similar procedure (now using the $s$-Frostman property of $\nu$) a $(\Delta,s,\Delta^{-\epsilon/4})$-subset $\bar{\mathcal{E}}_{\mathbf{q}} \subset \mathcal{E}_{\mathbf{q}}$ with $\nu(\cup \bar{\mathcal{E}}_{\mathbf{q}}) \geq \Delta^{\epsilon/4}$, and such that $\mathbf{p} \mapsto \nu(\mathbf{p})$ is almost constant (depending on $\mathbf{q}$) for $\mathbf{p} \in \bar{\mathcal{E}}_{\mathbf{q}}$. Finally, verify (using Cauchy-Schwarz) that some fixed subset $\mathcal{E} := \bar{\mathcal{E}}_{\mathbf{q}_{0}}$ satisfies 
\begin{displaymath} \nu(\mathcal{E} \cap \bar{\mathcal{E}}_{\mathbf{q}}) \geq \Delta^{\epsilon}, \qquad \mathbf{q} \in \mathcal{F}_{\Delta}'', \end{displaymath}
where $\mathcal{F}_{\Delta}'' \subset \mathcal{F}_{\Delta}'$ has $|\mathcal{F}_{\Delta}''| \geq \Delta^{\epsilon/2}|\mathcal{F}_{\Delta}'|$. Now $\mathcal{F}_{\Delta}''$ is a $(\Delta,t,\Delta^{-\epsilon})$-set, $\mathcal{E}$ is a $(\Delta,s,\Delta^{-\epsilon})$-set, and for each $\mathbf{q} \in \mathcal{F}_{\Delta}''$ the fat subset $\mathcal{E} \cap \bar{\mathcal{E}}_{\mathbf{q}} \subset \mathcal{E}$ continues to satisfy \eqref{form34}. This contradicts Proposition \ref{prop2} at scale $\Delta \leq \delta_{0}$, and completes the proof of the corollary.   \end{proof}

\section{Proof of the discretised main theorem}

In this section the $\delta$-discretised Theorem \ref{thm3} is proven. Here is the statement again:

\begin{thm}\label{thm4} For every $s \in (0,1]$ and $t \in (1,2]$ such that $s + t > 2$, for every $\tau \in (1,t)$, and $C \geq 1$, there exists a constants $\delta_{0} = \delta_{0}(C,s,t,\tau) > 0$ and $\epsilon = \epsilon(C,s,t,\tau) > 0$ such that the following holds for all $\delta \in 2^{-\N} \cap (0,\delta_{0}]$.

Let $\mu,\nu$ be Borel probability measures supported on $B(1)$, with $\dist(\spt \mu,\spt \nu) \geq \tfrac{1}{2}$. Assume
\begin{displaymath} \mu(B(x,r)) \leq Cr^{t} \quad \text{and} \quad \nu(B(x,r)) \leq Cr^{s}, \qquad x \in \R^{2}, \, r > 0. \end{displaymath}
Let $E \subset \spt \nu$ and $F \subset \spt \mu$ be Borel sets with full $\nu$ measure and $\mu$ measure, respectively.

Assume that to every $p \in E$ there corresponds a family $\mathcal{T}_{p} \subset \mathcal{T}^{\delta}$ of dyadic $\delta$-tubes such that $B(p,2\delta) \cap T \neq \emptyset$ for all $T \in \mathcal{T}_{p}$, and
\begin{equation}\label{form13} \mu(\cup \mathcal{T}_{p}) \geq C^{-1}, \qquad p \in E. \end{equation}
Then, there exists $p \in E$ and a tube $T \in \mathcal{T}_{p}$ such that $\mathcal{H}^{\tau - 1}_{\delta,\infty}(F \cap T) \geq \epsilon$.  \end{thm}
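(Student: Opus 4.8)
\textbf{Proof strategy for Theorem \ref{thm4}.} The plan is to argue by contradiction, combining the quantitative Furstenberg estimate (Theorem \ref{thm3a}), the radial projection corollary (Corollary \ref{cor2}), and the $\delta^{-\epsilon}$-free incidence inequality (Theorem \ref{thm2a}). Assume that $\mathcal{H}^{\tau - 1}_{\delta,\infty}(F \cap T) \leq \epsilon$ for every $p \in E$ and every $T \in \mathcal{T}_{p}$, where $\epsilon = \epsilon(C,s,t,\tau)$ is a small parameter to be fixed at the end. The first step is a reduction: after pigeonholing (losing only a factor $\log(1/\delta)$, which is \emph{not} yet dangerous because we can afford to lose it \emph{once}, not at every scale) one may assume that each $\mathcal{T}_{p}$ is a $(\delta,s,C')$-set of dyadic tubes with $C' \sim C\log(1/\delta)$ — indeed, the radial projection input via Corollary \ref{cor2} (applied to $\mu,\nu$ with the roles flipped, so that $q \in \spt \mu = F$ and $E' \subset \spt \nu$) upgrades property \eqref{form13} into a Frostman-type lower bound for $|\pi_{q}(E')|_{\delta}$, which is precisely what is needed to extract, for $\nu$-typically many $p$, a large $(\delta,s)$-subset of $\mathcal{T}_{p}$. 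Set $\mathcal{T} := \bigcup_{p \in E'} \mathcal{T}_{p}$ for the surviving good set $E'$.

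The second, and conceptually central, step is to locate an intermediate scale $\Delta \in (\delta,\tfrac{1}{2}]$ with $\Delta = o_{\epsilon}(1)$ at which the intersections $F \cap T$ are ``over-crowded.'' Concretely: the counter-assumption says each $F \cap 10T$ has $\mathcal{H}^{\tau-1}_{\delta,\infty}$-content $\leq \epsilon$, so by the Katz--Tao covering lemma (as in the proof of Theorem \ref{thm3a}) each such slice is covered efficiently by Katz--Tao $(\Delta_j, \tau-1)$-sets of balls at scales $\Delta_j$; pigeonholing over $j$ and over $T \in \mathcal{T}$ (again only $\log(1/\Delta)$ losses) produces a single scale $\Delta$, a disc $B(x_0,\Delta)$, a large subfamily $\mathcal{T}_0 \subset \mathcal{T}$, and the information that for each $T \in \mathcal{T}_0$ the segment $T \cap B(x_0,\Delta)$ carries $\gtrsim \Delta^{-(\tau - 1)}/\log^{O(1)}$ many $\delta$-balls of $F$. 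One should think $\Delta \sim \epsilon$. Now localise: $\mu' := \mu|_{B(x_0,\Delta)}$ rescaled is still $t$-Frostman (at constant $C$, up to $\log$), and one wants to run Theorem \ref{thm2a} against the family $\mathcal{T}_0 \cap B(x_0,\Delta)$.

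The key geometric observation, exactly as in the outline, is that the fibres $\mathcal{T}^q = \{T \in \mathcal{T} : q \in T\}$ satisfy $\mathcal{T}^q \cong \pi_q(E')$, and $E'$ is $(2-t)$-dimensional (more precisely, a $(\delta,2-t,\log^{O(1)})$-set, using the hypothesis $s + t > 2$ so that $s \geq 2 - t$ and $\nu$ dominates a $(2-t)$-Frostman measure); the radial projection bound from Corollary \ref{cor2} keeps $|\pi_q(E')|_\delta$ from collapsing. Therefore
\begin{displaymath} \mathcal{T}' := \bigcup_{q \in F \cap B(x_0,\Delta)} \mathcal{T}^q \end{displaymath}
is a dual $(2-t,t)$-Furstenberg family, and Theorem \ref{thm3a} (applied with $\sigma$ slightly below $s$, but what matters is $\sigma + 1 > 3 - t$, which holds since $s > 2 - t$) shows $\mathcal{T}'$ contains a Katz--Tao $(\delta, 3 - t, \log^{O(1)})$-set of tubes, i.e. $I_{3-t}(\nu') \lesssim \log^{O(1)}(1/\Delta) \cdot \Delta^{-\text{(rescaling factors)}}$ for the natural uniform measure $\nu'$ on it. Feeding $\mu', \nu'$ into Theorem \ref{thm2a} at scale $\sim \delta/\Delta$ gives an \emph{upper} bound for $\mathcal{I}_{\delta}(\mu',\nu')$ that, crucially, carries \emph{no} $\delta^{-\epsilon}$ and no $\log(1/\delta)$ factor, only the harmless $\log(1/\Delta) = \log O(1/\epsilon)$ powers accumulated above; on the other hand the over-crowding lower bound for the same incidence count forces a reverse inequality, and the two are incompatible once $\epsilon$ is chosen small enough (depending on $C,s,t,\tau$ through the exponents in the $\log$ powers).

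\textbf{Main obstacle.} The delicate point — and the reason Theorem \ref{thm2a} had to be proved $\delta^{-\epsilon}$-free — is bookkeeping the powers of $\log(1/\Delta)$ so that the final contradiction survives: every pigeonholing step (scale selection, disc selection, near-constancy of densities, passage to Katz--Tao subfamilies) costs a fixed power of $\log(1/\Delta)$, and these must all be absorbed into the gap between the over-crowding lower bound $\gtrsim \Delta^{-(\tau-1)}$ and the incidence upper bound $\lesssim \Delta^{-s/2 - \text{something}}$; this is where the strict inequalities $\tau < t$ and $\sigma < s$ are spent. Making the various rescaled Frostman constants and the scale $\delta/\Delta$ land in the admissible ranges of Theorems \ref{thm2a} and \ref{thm3a} (e.g. the angular-support restriction $[\tfrac14,\tfrac34]\times[-1,1]$, and $\delta \leq c/7$) is routine but must be done carefully, and is the part where I expect the argument to be technically heaviest rather than conceptually hard.
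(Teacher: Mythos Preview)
Your overall strategy matches the paper's proof closely: locate a scale $\Delta$ where tube-segments are over-crowded, use Corollary~\ref{cor2} to give $\mathcal{T}^{q}$ a $(\delta/\Delta,\underline{s})$-set structure, apply Theorem~\ref{thm3a} to extract a Katz--Tao $(\delta/\Delta,\sigma+1)$-set of tubes, and contradict the over-crowding via Theorem~\ref{thm2a}. The later paragraphs of your proposal are essentially correct.

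However, your \emph{first step} is both confused and dangerous. You claim that Corollary~\ref{cor2} lets you assume each $\mathcal{T}_{p}$ (tubes through $p \in E$) is a $(\delta,s,C\log(1/\delta))$-set. This is not what the corollary says: it controls $\pi_{q}(E')$ for $q \in F$, which translates into structure for $\mathcal{T}^{q}$ (tubes through $q \in F$), not for $\mathcal{T}_{p}$. The paper uses Corollary~\ref{cor2} exactly this way, and only \emph{after} localising to a $\Delta$-square $Q$: for $q \in G \cap Q$ one has $\nu(E_{q}) \gtrapprox_{\Delta} 1$, and then the corollary yields $\mathcal{H}^{\underline{s}}_{\infty}(\pi_{q}(E_{q})) \gtrapprox_{\Delta} 1$, from which a $(\delta/\Delta,\underline{s},\approx_{\Delta} 1)$-set $\mathcal{T}^{q}$ is extracted. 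Your first step should simply be dropped; the place where Corollary~\ref{cor2} enters is your third paragraph, and there you describe it correctly.

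Relatedly, the assertion that ``we can afford to lose $\log(1/\delta)$ once'' is wrong, and this is exactly the subtlety the paper works hard to avoid. Since $\epsilon$ must be independent of $\delta$, the final inequality $\epsilon \gtrsim (\text{losses}) \cdot \Delta^{\tau - t}$ must carry only $\log^{-O(1)}(1/\Delta)$ factors; a stray $\log^{-1}(1/\delta)$ would let the right side tend to $0$ as $\delta \to 0$ (with $\Delta$ bounded away from $0$), and no contradiction would follow. The paper achieves this by weighted pigeonholing over scales $j$ with weights $j^{-2}$, so that when scale $\Delta = 2^{-j}$ is selected the loss is $j^{-2} = \log^{-2}(1/\Delta)$ rather than $(\log(1/\delta))^{-1}$. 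You should make sure every pigeonholing step is of this form.
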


\begin{remark} There are the following minor differences compared to the first version of Theorem \ref{thm3}: the families $\mathcal{T}_{p}$ now consist of dyadic $\delta$-tubes instead of ordinary $\delta$-tubes, the requirement $p \in T$ is relaxed to $T \cap B(p,2\delta) \neq \emptyset$, and the conclusion is about $F \cap T$ instead of $F \cap 10T$. It is routine to pass between these two forms of the theorem. (Hint: check that if $T$ is an ordinary $\delta$-tube containing $p$, then $T$ can be covered by $\lesssim 1$ dyadic $\delta$-tubes $T_{1},\ldots,T_{N}$ intersecting $B(p,2\delta)$ and such that $F \cap T_{j} \subset F \cap 10T$ for $1 \leq j \leq N$.) The formulation of Theorem \ref{thm4} is more convenient to prove: for example, automatically $|\mathcal{T}_{p}| \lesssim \delta^{-1}$ for all $p \in E$, and summing over $T \in \mathcal{T}_{p}$ makes sense. \end{remark}

\begin{proof}[Proof of Theorem \ref{thm4}] Fix $\underline{t} \in (1,t)$ and $\underline{s} < s$ so close to $t,s$, respectively, that 
\begin{equation}\label{form29} \underline{s} > 2 - \underline{t}. \end{equation} 
We treat $\underline{s},\underline{t}$ as functions of $s,t$: dependence on $\underline{s},\underline{t}$ is recorded as a dependence on $s,t$.
 
The proof starts by restricting $\mu$ to a "good set" $F' \subset \spt \mu$ of measure $1 - (2C)^{-1}$. Namely, let $K = K(2C^{2},s,\underline{s},t) \geq 1$ be the constant from Corollary \ref{cor2} with the parameters listed in parentheses. I claim that there exists a subset $F' \subset \spt \mu$ with $\mu(F') \geq 1 - (2C)^{-1}$ and the following property. If $q \in F'$, then
\begin{equation}\label{form2} \mathcal{H}^{\underline{s}}_{\infty}(\pi_{q}(E')) \geq \tfrac{1}{K}(\nu(E'))^{K}, \qquad E' \subset \spt \nu \text{ Borel}. \end{equation}
Let $F_{\mathrm{bad}} \subset \spt \mu$ be the set of points $q \in \spt \mu$ failing \eqref{form2}. If $\mu(F_{\mathrm{bad}}) > (2C)^{-1}$, then the renormalised measure $\bar{\mu} := \mu(F_{\mathrm{bad}})^{-1}\mu|_{F_{\mathrm{bad}}}$ is a probability measure satisfying the Frostman condition $\bar{\mu}(B(x,r)) \leq 2C^{2}r^{t}$. One may then apply Corollary \ref{cor2} to $\bar{\mu},\nu$. By the choice of the constant $K = K(2C^{2},s,\underline{s},t)$, one finds a point $q \in F_{\mathrm{bad}}$ (a set of full $\bar{\mu}$ measure) satisfying \eqref{form2}. This contradicts the definition of $F_{\mathrm{bad}}$.

Define $F' := \spt \mu \, \setminus \, F_{\mathrm{bad}}$. It was established above that $\mu(F') \geq 1 - (2C)^{-1}$. In particular, by \eqref{form13}, 
\begin{displaymath} \mu(F' \cap (\cup \mathcal{T}_{p})) \geq (2C)^{-1}, \qquad p \in \mathcal{E}. \end{displaymath}
This observation shows that the measures $\mu$ and $\mu|_{F'}$ satisfy the same hypotheses, up to replacing "$C$" by "$2C$". Since it also suffices to show that $\mathcal{H}^{\tau - 1}_{\delta,\infty}(F' \cap T) \geq \epsilon$ for some $p \in E$ and $T \in \mathcal{T}_{p}$, it is now safe to restrict $\mu$ to $F'$. I do this without altering notation: in other words I assume in the sequel that $\mu(F') = 1$ and \eqref{form2} holds for all $q \in F$. 

Now comes main counter assumption: 
\begin{equation}\label{form18} \mathcal{H}^{\tau - 1}_{\delta,\infty}(F \cap T) \leq \epsilon, \qquad p \in E, \, T \in \mathcal{T}_{p}. \end{equation}
Here
\begin{equation}\label{form31} \epsilon := \tfrac{1}{\mathbf{A}} \inf \{ \log^{-\mathbf{A}}(1/\Delta)\Delta^{\tau - t} : \Delta \in (0,\tfrac{1}{2}]\} \end{equation}
and $\mathbf{A} = \mathbf{A}(C,s,t,\tau) \geq 1$ is a constant to be determined at the very end of the proof. Note that $\epsilon \gtrsim_{C,s,t,\tau} 1$ since $\tau < t$. For the remainder of the argument, I use the convention that the constants in the "$\lesssim$" notation are allowed to depend on $C,s,t,\tau$.

The counter assumption \eqref{form18} and the pigeonhole principle will next be applied to find useful, relatively large, subsets of $E$ and $F$. Start by fixing $p \in E$ and $T \in \mathcal{T}_{p}$. According to \eqref{form18}, one can find a family of rectangles $\mathcal{R}(p,T)$ which are contained in $T$, cover $F \cap T$, and satisfy 
\begin{equation}\label{form19} \sum_{R \in \mathcal{R}(p,T)} \diam(R)^{\tau - 1} \leq \epsilon. \end{equation}
Perhaps enlarging "$\epsilon$" slightly, one may assume the rectangles in $\mathcal{R}(p,T)$ have dimensions $\delta \times 2^{-j}$ for some $\delta \leq 2^{-j} \leq \tfrac{1}{2}$. Let us group the rectangles according to their longer side: 
\begin{displaymath} \mathcal{R}(p,T,j) := \{R \in \mathcal{R}(p,T) : R \text{ is a } \delta \times 2^{-j}\text{-rectangle}\}, \end{displaymath}
for $j \in \{1,\ldots,\log(1/\delta)\}$. With this notation, observe that
\begin{displaymath} \sum_{j = 1}^{\log(1/\delta)} \Big( \int_{E} \sum_{T \in \mathcal{T}_{p}} \sum_{R \in \mathcal{R}(p,T,j)} \mu(R) \, d\nu p \Big) \geq \int_{E} \mu(\cup \mathcal{T}_{p}) \, d\nu p \geq \frac{1}{C}, \end{displaymath}
according to \eqref{form13}. It follows that there exists an index $j \in \{1,\ldots,\log(1/\delta)\}$ such that
\begin{equation}\label{form23} \int_{E} \sum_{T \in \mathcal{T}_{p}} \sum_{R \in \mathcal{R}(p,T,j)} \mu(R) \, d\nu p \geq \frac{c}{Cj^{2}}. \end{equation}
This index "$j$" will be fixed for the remainder of the proof. Abusing notation, I abbreviate $\mathcal{R}(p,T) := \mathcal{R}(p,T,j)$ for this index "$j$", and write 
\begin{displaymath} \Delta := 2^{-j} \in [\delta,\tfrac{1}{2}]. \end{displaymath}
Estimate \eqref{form23} further implies that there exists a subset $\bar{E} \subset E$ with $\nu(\bar{E}) \geq c/(C\log^{2}(1/\Delta))$ and the property
\begin{equation}\label{form24} \sum_{T \in \mathcal{T}_{p}} \sum_{R \in \mathcal{R}(p,T)} \mu(R) \geq \frac{c}{C\log^{2} (1/\Delta)}, \qquad p \in \bar{E}. \end{equation}
The families $\mathcal{T}_{p}$ and $\mathcal{R}(p,T)$ still need pruning. Fix $p \in \bar{E}$, and note that $|\mathcal{T}_{p}| \lesssim \delta^{-1}$, since $\mathcal{T}_{p}$ is a family of dyadic $\delta$-tubes intersecting $B(p,2\delta)$. A tube $T \in \mathcal{T}_{p}$ is called \emph{heavy}, denoted $T \in \mathcal{T}_{p,\mathrm{heavy}}$, if
\begin{equation}\label{form25} \sum_{R \in \mathcal{R}(p,T)} \mu(R) \geq \frac{c'\delta}{C\log^{2}(1/\Delta)} \end{equation}
for a suitable constant $c' > 0$. Since $|\mathcal{T}_{p}| \lesssim \delta^{-1}$, it follows from \eqref{form24} that
\begin{displaymath} \sum_{T \in \mathcal{T}_{p,\mathrm{heavy}}} \sum_{R \in \mathcal{R}(p,T)} \mu(R) \geq \tfrac{1}{2} \sum_{T \in \mathcal{T}_{p}} \sum_{R \in \mathcal{R}(p,T)} \mu(R), \qquad p \in \bar{E}. \end{displaymath} 
Since this lower bound is just as good as \eqref{form24} for future purposes, I will assume that $\mathcal{T}_{p} = \mathcal{T}_{p,\mathrm{heavy}}$ for all $p \in \bar{E}$, or in other words every tube $T \in \mathcal{T}_{p}$ satisfies \eqref{form25}. 

Finally, for $p \in E$ and $T \in \mathcal{T}_{p} = \mathcal{T}_{p,\mathrm{heavy}}$ fixed, the family $\mathcal{R}(p,T)$ needs pruning. Observe from \eqref{form19} that $|\mathcal{R}(p,T)| \leq \epsilon \cdot \Delta^{1 - \tau}$. A rectangle $R \in \mathcal{R}(p,T)$ is called heavy, denoted $R \in \mathcal{R}(p,T)_{\mathrm{heavy}}$, if 
\begin{equation}\label{form26} \mu(R) \geq \frac{c''\delta}{C\epsilon \cdot \Delta^{1 - \tau}\log^{2}(1/\Delta)} \end{equation}
for a suitable constant $c'' > 0$. Since $|\mathcal{R}(p,T)| \leq \epsilon \cdot \Delta^{1 - \tau}$, it follows from \eqref{form25} that
\begin{displaymath} \sum_{R \in \mathcal{R}(p,T)_{\mathrm{heavy}}} \mu(R) \geq \tfrac{1}{2} \sum_{R \in \mathcal{R}(p,T)} \mu(R), \qquad T \in \mathcal{T}_{p}. \end{displaymath} 
Again, this lower bound for the sum over $R \in \mathcal{R}(p,T)_{\mathrm{heavy}}$ is just as useful as the lower bound for the full sum over $R \in \mathcal{R}(p,T)$, so I assume that $\mathcal{R}(p,T)_{\mathrm{heavy}} = \mathcal{R}(p,T)$ to begin with, for all $p \in \bar{E}$ and $T \in \mathcal{T}_{p}$.

Here is a summarise of the achievements so far. We have found:
\begin{itemize}
\item[(i) \phantomsection \label{i}] A set $\bar{E} \subset E$ satisfying $\nu(\bar{E}) \gtrsim 1/(C\log^{2}(1/\Delta))$.
\item[(ii) \phantomsection \label{ii}] For each $p \in \bar{E}$ the tubes $T \in \mathcal{T}_{p}$ cover substantial $\mu$ measure, according to \eqref{form24}. In fact, 
\begin{displaymath} \mu(F_{p}) \gtrsim \frac{1}{C\log^{2}(1/\Delta)}, \quad \text{where} \quad F_{p} := \bigcup_{T \in \mathcal{T}_{p}} \bigcup_{R \in \mathcal{R}(p,T)} R. \end{displaymath}
\item[(iii) \phantomsection \label{iii}] For $p \in \bar{E}$ and $T \in \mathcal{T}_{p}$, every rectangle $R \in \mathcal{R}(p,T)$ is heavy, thus satisfying \eqref{form26}.
\end{itemize}

From now on the following notational convention will be in place: for the scale $\Delta \in [\delta,\tfrac{1}{2}]$ located above, I will write $A \lessapprox_{\Delta} B$ if 
\begin{displaymath} A \leq \mathbf{C}\log^{\mathbf{C}}(1/\Delta)B \end{displaymath}
for some constant $\mathbf{C} \geq 1$ which may depend on $C,s,t,\tau$. Any $\Delta$-dependence of this form is harmless. (In contrast, constants of the form $\log(1/\delta)$ would be detrimental.)

Observe that
\begin{displaymath} \int_{F} \nu(\{p \in \bar{E} : q \in F_{p}\}) \, d\mu q  = \int_{\bar{E}} \mu(F_{p}) \, d\nu p \gtrapprox_{\Delta} 1.  \end{displaymath}
by properties \nref{i} and \nref{ii} above. Consequently, there exists a set $G \subset F$ such that $\mu(G) \gtrapprox_{\Delta} 1$, and 
\begin{equation}\label{form27} \nu(\{p \in \bar{E} : q \in F_{p}\}) \gtrapprox_{\Delta} 1, \qquad q \in G. \end{equation} 
Write $E_{q} := \{p \in \bar{E} : q \in F_{p}\}$ for $q \in G$.

I claim that it is possible to select a distinguished square $Q \in \mathcal{D}_{\Delta}$ with the property
\begin{equation}\label{form4} \mu(G \cap Q) \gtrapprox_{\Delta} \mu(10Q). \end{equation}
Indeed, if this fails for all squares $Q \in \mathcal{D}_{\Delta}$, then the bounded overlap of the squares $10Q$ would immediately contradict $\mu(G) \gtrapprox_{\Delta} 1$. Now, fix a square $Q \in \mathcal{D}_{\Delta}$ satisfying \eqref{form4} for the remainder of the proof, see Figure \ref{fig1}.

\begin{figure}[h!]
\begin{center}
\begin{overpic}[scale = 1.2]{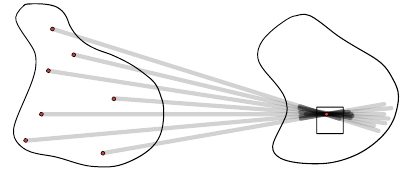}
\put(5,37){$E$}
\put(20,18){$E_{q}$}
\put(80,5){$Q$}
\put(80,11){\small{$q$}}
\put(70,33){$F$}
\end{overpic}
\caption{The square $Q$, a point $q \in G \cap Q$, the set $E_{q}$, and the tube family $\mathcal{T}_{0}^{q}$. The darker patches around $q$ signify the $(\delta \times \Delta)$-rectangles $R(T) \in \mathcal{R}(p,T)$, $p \in E_{q}$, all of which contain $q$ and are contained in $10Q$. }\label{fig1}
\end{center}
\end{figure}

Fix $q \in G \cap Q \subset F$, and recall that $\nu(E_{q}) \gtrapprox_{\Delta} 1$ by \eqref{form27}. Consequently, according to \eqref{form2} applied to $E' = E_{q}$, and recalling that $K = K(2C^{2},s,\underline{s},t)$ is a constant depending on $C,s,t$ only (since $\underline{s}$ is a function of $s,t$),
\begin{equation}\label{form36} \mathcal{H}^{\underline{s}}_{\infty}(\pi_{q}(E_{q})) \geq \tfrac{1}{K}(\nu(E_{q}))^{K} \gtrapprox_{\Delta} 1. \end{equation}
To draw benefit from \eqref{form36}, recall that $E_{q} = \{p \in \bar{E} : q \in F_{p}\}$, where further
\begin{displaymath} F_{p} = \bigcup_{T \in \mathcal{T}_{p}} \bigcup_{R \in \mathcal{R}(p,T)} R. \end{displaymath}
Thus, for every $p \in E_{q}$, there exists a tube $T = T(p,q) \in \mathcal{T}_{p}$ and a rectangle $R = R(T) \in \mathcal{R}(p,T)$ with 
\begin{displaymath} q \in R \subset T \quad \text{and} \quad T \cap B(p,2\delta) \neq \emptyset. \end{displaymath}
The family $\mathcal{T}^{q}_{0} := \{T(p,q) : p \in E_{q}\}$ now consists of $\delta$-tubes containing $q$, and $E_{q}$ is contained in the $(2\delta)$-neighbourhood of the union $\cup \mathcal{T}^{q}_{0}$, see Figure \ref{fig1}. The lower bound \eqref{form36} says (slightly informally) that the $\mathcal{H}^{\underline{s}}_{\delta,\infty}$-content of $\mathcal{T}^{q}_{0}$ is bounded from below by $\gtrapprox_{\Delta} 1$. Therefore, applying \cite[Proposition A.1]{FaO} at scale $\delta/\Delta \geq \delta$, one can select a $(\delta/\Delta,\underline{s},\approx_{\Delta} 1)$-set 
\begin{displaymath} \mathcal{T}^{q} \subset \mathcal{T}^{q}_{0} \end{displaymath}
consisting of $(\delta/\Delta)$-separated elements (since the tubes in $\mathcal{T}_{0}^{q}$ contain $q$, this is equivalent to saying that the angles of the tubes in $\mathcal{T}^{q}$ are $(\delta/\Delta)$-separated).

For each $T = T(p,q) \in \mathcal{T}^{q}$, recall that $q \in R(T) \subset T$. Since $q \in Q$, and $\diam(R(T)) \leq 2\Delta$, and $Q$ is a square of side $\Delta$, it holds that $R(T) \subset 10Q$. Consequently, one may infer from \eqref{form26}, and \nref{iii}, that
\begin{equation}\label{form5} \mu(T \cap 10Q) \geq \mu(R(T)) \gtrapprox_{\Delta} \epsilon^{-1}(\delta/\Delta)\Delta^{-\tau}, \qquad T \in \mathcal{T}^{q}. \end{equation}
Informally speaking, every tube in $\mathcal{T} := \bigcup_{q \in G \cap Q} \mathcal{T}^{q}$ has many incidences with $\mu|_{10Q}$.

Before attempting to formalise this statement properly, let us examine the tube family $\mathcal{T}$. For $q \in G \cap Q$, the family $\mathcal{T}^{q}$ was by definition a $(\delta/\Delta)$-separated $(\delta/\Delta,\underline{s},\approx_{\Delta} 1)$-set of $\delta$-tubes incident to $q$. The morale for "$\delta/\Delta$" is that we are seeking upper and lower bounds on incidences between $\mathcal{T}$ and $\mu|_{10Q}$, and it is natural to "renormalise" the problem by rescaling $\mu|_{10Q}$ by a factor of $\Delta^{-1}$.

In fact, consider the measure $\mu_{Q}$ supported on $[0,1]^{2}$, defined by
\begin{displaymath} \boldsymbol{\mu} := \Delta^{-t}S_{10Q}(\mu|_{10Q}), \end{displaymath}
where $S_{10Q}$ is the scaling map taking $10Q$ to $[0,1]^{2}$. Note that the normalisation by $\Delta^{-t}$ preserves the $t$-Frostman condition, that is, $\mu_{Q}(B(x,r)) \lesssim Cr^{t}$ for all $x \in \R^{2}$ and $r > 0$.

For $\mathbf{q} = S_{10Q}(q) \in S_{10Q}(G \cap Q)$, consider also the rescaled tube family 
\begin{displaymath} \mathcal{T}^{\mathbf{q}} := \{S_{10Q}(T) : T \in \mathcal{T}^{q}\}. \end{displaymath}
Rescaling does not affect angular separation, but simply thickens the tubes: $\mathcal{T}^{\mathbf{q}}$ is a $(\delta/\Delta)$-separated $(\delta/\Delta,\underline{s},\approx_{\Delta} 1)$-set of $(\delta/\Delta)$-tubes incident to $\mathbf{q}$, for $\mathbf{q} \in S_{10Q}(G \cap Q) =: \boldsymbol{G}$. 

To be accurate, whereas $\mathcal{T}^{q}$ consists of dyadic $\delta$-tubes, the rescaled sets $S_{10Q}(T)$ are not exactly dyadic $(\delta/\Delta)$-tubes. To mend this, I associate to each set $S_{10Q}(T) \in \mathcal{T}^{\mathbf{q}}$ a suitable dyadic $(\delta/\Delta)$-tube. The important properties of the sets $S_{10Q}(T) \in \mathcal{T}^{\mathbf{q}}$ are: 
\begin{itemize}
\item[(a)] they contain $\mathbf{q} = S_{10Q}(q)$, and
\item[(b)] $\boldsymbol{\mu}(S_{10Q}(T)) = \Delta^{-t}\mu(T \cap 10Q) \gtrapprox_{\Delta} \epsilon^{-1}(\delta/\Delta)\Delta^{\tau - t}$ by \eqref{form5}. 
\end{itemize}
It is desirable to preserve these properties when replacing $S_{10Q}(T)$ by a dyadic $(\delta/\Delta)$-tube. Fixing $T \in \mathcal{T}^{q}$, cover $S_{10Q}(T) \cap [0,1]^{2}$ by $\lesssim 1$ dyadic $(\delta/\Delta)$-tubes. Then, at least one of them, say $\mathbf{T}$, still satisfies 
\begin{equation}\label{form38} \boldsymbol{\mu}(\mathbf{T}) \gtrapprox_{\Delta} \epsilon^{-1}(\delta/\Delta)\Delta^{\tau - t}, \end{equation}
and also $\mathbf{T} \cap B(\mathbf{q},A(\delta/\Delta)) \neq \emptyset$ for a suitable absolute constant $A \geq 1$; this is a little weaker than (a), but perfectly adequate. Redefine $\mathcal{T}^{\mathbf{q}}$ to consist of the dyadic $(\delta/\Delta)$-tubes obtained in this way.

Let us record that
\begin{equation}\label{form21} \boldsymbol{\mu}(\boldsymbol{G}) = \Delta^{-t}\mu(G \cap Q) \gtrapprox_{\Delta} \Delta^{-t}\mu(10Q) \end{equation}
by \eqref{form4}. On the other hand, using that $\mu_{Q}(B(x,r)) \lesssim Cr^{t}$ and $\underline{t} < t$,  
\begin{equation}\label{form28} I_{\underline{t}}(\boldsymbol{\mu}) \lesssim \boldsymbol{\mu}(\R^{2}) = \Delta^{-t}\mu(10Q). \end{equation}
It is crucial that the same factor $\mu(10Q)$ appears in both bounds \eqref{form21}-\eqref{form28}.

Recall from \eqref{form29} that $\underline{s} > 2 - \underline{t} > 2 - t$. Fix $\sigma < \underline{s}$ such that still 
\begin{equation}\label{form41} \sigma > 2 - \underline{t}. \end{equation}
Recall again that for each $\mathbf{q} \in \boldsymbol{G}$, the family $\mathcal{T}^{\mathbf{q}} \subset \mathcal{T}^{\delta/\Delta}$ is a $(\delta/\Delta,\underline{s},\approx_{\Delta} 1)$-set. Apply Theorem \ref{thm3a} to the families $\mathcal{T}^{\mathbf{q}}$, $\mathbf{q} \in \boldsymbol{G}$, to the measure $\boldsymbol{\mu}|_{\boldsymbol{G}}$, and with parameters $t,\underline{s},\sigma$. The conclusion is that the union $\bigcup_{\mathbf{q} \in \boldsymbol{G}} \mathcal{T}^{\mathbf{q}}$ contains a Katz-Tao $(\delta/\Delta,\sigma + 1)$-set $\mathcal{T}$ of cardinality 
\begin{equation}\label{form20} |\mathcal{T}| \gtrapprox_{\Delta} \boldsymbol{\mu}(\boldsymbol{G}) \left(\frac{\delta}{\Delta} \right)^{-(\sigma + 1)} \stackrel{\eqref{form21}}{\gtrsim} \left(\frac{\delta}{\Delta} \right)^{-(\sigma + 1)}\Delta^{-t}\mu(10Q). \end{equation}

Let $\nu$ be $(\delta/\Delta)$-discretised measure on $\mathcal{T} \subset \mathcal{T}^{\delta/\Delta}$ which gives unit mass to each element $T \in \mathcal{T}$. Formally, much like in \eqref{form61}, $\nu$ is defined as an absolutely continuous measure on the "parameter" space $[0,1] \times \R$ of dyadic $(\delta/\Delta)$-tubes:
\begin{displaymath} \boldsymbol{\nu} = \sum_{Q \in \mathcal{Q}} (\tfrac{\delta}{\Delta})^{-2}\mathbf{1}_{Q}, \end{displaymath}
where $\mathcal{Q} = \{Q \in \mathcal{D}_{\delta/\Delta}([0,1] \times \R) : T(Q) \in \mathcal{T}\}$ is the collection of "parameter squares" for the tubes in $\mathcal{T}$. Recall from \eqref{form40} that
\begin{equation}\label{form39} T(Q) \cap B(2) \subset \ell_{\theta,r}(10(\delta/\Delta)), \qquad (\theta,r) \in Q. \end{equation}
Since $\sigma + 1 > 3 - \underline{t}$ by \eqref{form41}, and $\mathcal{Q}$ is a Katz-Tao $(\delta/\Delta,\sigma + 1)$-set, it is straightforward to check that $\int d\boldsymbol{\nu}(y)/|x - y|^{(3 - \underline{t})} \lesssim (\delta/\Delta)^{-(\sigma + 1)}$ for all $x \in \R^{2}$. Therefore
\begin{equation}\label{form22} I_{3 - \underline{t}}(\boldsymbol{\nu}) \lesssim \left(\tfrac{\delta}{\Delta} \right)^{-(\sigma + 1)}\boldsymbol{\nu}(\R^{2}) = \left(\tfrac{\delta}{\Delta} \right)^{-(\sigma + 1)}|\mathcal{T}|. \end{equation}
Next, recall from \eqref{form38} that the tubes in $\mathcal{T}$ have many $(\delta/\Delta)$-incidences with $\boldsymbol{\mu}$. In fact,
\begin{align}\label{form30} \mathcal{I}_{10(\delta/\Delta)}(\boldsymbol{\mu},\boldsymbol{\nu}) & \stackrel{\eqref{def:incidences}}{=} \int \boldsymbol{\mu}(\ell_{\theta,r}(10(\delta/\Delta)) \, d\boldsymbol{\nu}(\theta,r) \notag\\
& = \sum_{Q \in \mathcal{Q}} \left(\tfrac{\delta}{\Delta} \right)^{-2} \iint_{Q} \boldsymbol{\mu}(\ell_{\theta,r}(10(\delta/\Delta)) \, d\theta dr \notag\\
& \stackrel{\eqref{form39}}{\geq} \sum_{Q \in \mathcal{Q}} \boldsymbol{\mu}(T(Q)) \stackrel{\eqref{form38}}{\gtrapprox_{\Delta}} |\mathcal{T}| \cdot \epsilon^{-1}(\delta/\Delta)\Delta^{\tau - t}. \end{align} 
On the other hand, one may use Theorem \ref{thm2a} to bound $\mathcal{I}_{10(\delta/\Delta)}$ from above:
\begin{align*} \mathcal{I}_{10(\delta/\Delta)}(\boldsymbol{\mu},\boldsymbol{\nu}) \lesssim \frac{\delta}{\Delta} \sqrt{I_{3 - \underline{t}}(\boldsymbol{\nu})I_{\underline{t}}(\boldsymbol{\mu})} \stackrel{\eqref{form28} + \eqref{form22}}{\lesssim} \frac{\delta}{\Delta} \sqrt{(\delta/\Delta)^{-(\sigma + 1)}|\mathcal{T}| \cdot \Delta^{-t}\mu(10Q)}. \end{align*} 
Combining this with the lower bound from \eqref{form30}, and rearranging, yields
\begin{displaymath} |\mathcal{T}| \lessapprox_{\Delta} \epsilon^{2} \Delta^{t - 2\tau} \cdot \left(\tfrac{\delta}{\Delta}\right)^{-(\sigma + 1)}\mu(10Q). \end{displaymath} 
Comparing this with the lower bound for $|\mathcal{T}|$ recorded in \eqref{form20}, and rearranging some more, one ends up with $\epsilon \gtrapprox_{\Delta} \Delta^{\tau - t}$. This means the same as
\begin{displaymath} \epsilon \geq \mathbf{C}^{-1}\log^{-\mathbf{C}}(1/\Delta)\Delta^{\tau - t} \end{displaymath}
for some constant $\mathbf{C} = \mathbf{C}(C,s,t,\tau) \geq 1$.
This however contradicts the choice of "$\epsilon$" at \eqref{form31}, choosing finally $\mathbf{A} > \mathbf{C}$. The proof of Theorem \ref{thm4} is complete. \end{proof}

\bibliographystyle{plain}
\bibliography{references}

\end{document}